\def\T{\Gamma} \def\D{\Delta} \def\Th{\Theta}
\def\Ld{\Lambda} \def\E{\Sigma} \def\O{\Omega}
\def\a{\alpha} \def\b{\beta} \def\g{\gamma} \def\d{\delta} \def\e{\varepsilon}
\def\r{\rho} \def\o{\sigma} \def\t{\tau} \def\w{\omega} \def\k{\kappa}
\def\th{\theta} \def\ld{\lambda} \def\ph{\varphi} \def\z{\zeta}
\def\A{$A$~} \def\G{$G$~} \def\H{$H$~} \def\K{$K$~} \def\M{$M$~} \def\N{$N$~}
\def\P{$P$~} \def\Q{$Q$~} \def\R{$R$~} \def\V{$V$~} \def\X{$X$~} \def\Y{$Y$~}
\def\rmA{{\bf A}} \def\rmD{{\bf D}} \def\rmS{{\bf S}} \def\rmK{{\bf K}}
\def\rmM{{\bf M}} \def\Z{{\Bbb Z}} \def\GL{{\bf GL}} \def\C{Cayley}
\def\oa{\ovl A} \def\og{\ovl G} \def\oh{\ovl H} \def\ob{\ovl B} \def\oq{\ovl Q}
\def\oc{\ovl C} \def\ok{\ovl K} \def\ol{\ovl L} \def\om{\ovl M} \def\on{\ovl N}
\def\op{\ovl P} \def\oR{\ovl R} \def\os{\ovl S} \def\ot{\ovl T} \def\ou{\ovl U}
\def\ov{\ovl V} \def\ow{\ovl W} \def\ox{\ovl X} \def\oT{\ovl\T}
\def\lg{\langle} \def\rg{\rangle}
\def\di{\bigm|} \def\Di{\Bigm|} \def\nd{\mathrel{\bigm|\kern-.7em/}}
\def\Nd{\mathrel{\not\,\Bigm|}} \def\edi{\bigm|\bigm|}
\def\m{\medskip} \def\l{\noindent} \def\x{$\!\,$}  \def\J{$-\!\,$}
\def\Hom{\hbox{\rm Hom}} \def\Aut{\hbox{\rm Aut}} \def\Inn{\hbox{\rm Inn}}
\def\Syl{\hbox{\rm Syl}} \def\Sym{\hbox{\rm Sym}} \def\Alt{\hbox{\rm Alt}}
\def\Ker{\hbox{\rm Ker}} \def\fix{\hbox{\rm fix}} \def\mod{\hbox{\rm mod}}
\def\psl{{\bf P\!SL}} \def\Cay{\hbox{\rm Cay}} \def\Mult{\hbox{\rm Mult}}
\def\val{\hbox{\rm Val}} \def\Sab{\hbox{\rm Sab}} \def\supp{\hbox{\rm supp}}
\def\qed{\hfill $\Box$} \def\qqed{\qed\vspace{3truemm}}
\def\CS{\Cay(G,S)} \def\CT{\Cay(G,T)}
\def\h{\heiti\bf} \def\hs{\ziti{E}\bf} \def\st{\songti} \def\ft{\fangsong}
\def\kt{\kaishu} \def\heit{\hs\relax} \def\songt{\st\rm\relax}
\def\fangs{\ft\rm\relax} \def\kaish{\kt\rm\relax} \def\fs{\footnotesize}
\begin{document}

\newtheorem{theorem}{Theorem}[section]
\newtheorem{corollary}[theorem]{Corollary}
\newtheorem{definition}[theorem]{Definition}
\newtheorem{conjecture}[theorem]{Conjecture}
\newtheorem{question}[theorem]{Question}
\newtheorem{lemma}[theorem]{Lemma}
\newtheorem{proposition}[theorem]{Proposition}
\newtheorem{example}[theorem]{Example}
\newtheorem{problem}[theorem]{Problem}
\newenvironment{proof}{\noindent {\bf
Proof.}}{\rule{3mm}{3mm}\par\medskip}
\newcommand{\remark}{\medskip\par\noindent {\bf Remark.~~}}
\newcommand{\pp}{{\it p.}}
\newcommand{\de}{\em}

\def\dfrac{\displaystyle\frac} \def\ovl{\overline}
\def\for{\forall~} \def\exi{\exists~} \def\c{\subseteq}
\def\iif{\Longleftrightarrow} \def\Rto{\Rightarrow} \def\Lto{\Leftarrow}
\def\T{\Gamma} \def\D{\Delta} \def\Th{\Theta}
\def\Ld{\Lambda} \def\E{\Sigma} \def\O{\Omega}
\def\a{\alpha} \def\b{\beta} \def\g{\gamma} \def\d{\delta} \def\e{\varepsilon}
\def\r{\rho} \def\o{\sigma} \def\t{\tau} \def\w{\omega} \def\k{\kappa}
\def\th{\theta} \def\ld{\lambda} \def\ph{\varphi} \def\z{\zeta}
\def\A{$A$~} \def\G{$G$~} \def\H{$H$~} \def\K{$K$~} \def\M{$M$~} \def\N{$N$~}
\def\P{$P$~} \def\Q{$Q$~} \def\R{$R$~} \def\V{$V$~} \def\X{$X$~} \def\Y{$Y$~}
\def\rmA{{\bf A}} \def\rmD{{\bf D}} \def\rmS{{\bf S}} \def\rmK{{\bf K}}
\def\rmM{{\bf M}} \def\Z{{\Bbb Z}} \def\GL{{\bf GL}} \def\C{Cayley}
\def\oa{\ovl A} \def\og{\ovl G} \def\oh{\ovl H} \def\ob{\ovl B} \def\oq{\ovl Q}
\def\oc{\ovl C} \def\ok{\ovl K} \def\ol{\ovl L} \def\om{\ovl M} \def\on{\ovl N}
\def\op{\ovl P} \def\oR{\ovl R} \def\os{\ovl S} \def\ot{\ovl T} \def\ou{\ovl U}
\def\ov{\ovl V} \def\ow{\ovl W} \def\ox{\ovl X} \def\oT{\ovl\T}
\def\lg{\langle} \def\rg{\rangle}
\def\di{\bigm|} \def\Di{\Bigm|} \def\nd{\mathrel{\bigm|\kern-.7em/}}
\def\Nd{\mathrel{\not\,\Bigm|}} \def\edi{\bigm|\bigm|}
\def\m{\medskip} \def\l{\noindent} \def\x{$\!\,$}  \def\J{$-\!\,$}
\def\Hom{\hbox{\rm Hom}} \def\Aut{\hbox{\rm Aut}} \def\Inn{\hbox{\rm Inn}}
\def\Syl{\hbox{\rm Syl}} \def\Sym{\hbox{\rm Sym}} \def\Alt{\hbox{\rm Alt}}
\def\Ker{\hbox{\rm Ker}} \def\fix{\hbox{\rm fix}} \def\mod{\hbox{\rm mod}}
\def\psl{{\bf P\!SL}} \def\Cay{\hbox{\rm Cay}} \def\Mult{\hbox{\rm Mult}}
\def\val{\hbox{\rm Val}} \def\Sab{\hbox{\rm Sab}} \def\supp{\hbox{\rm supp}}
\def\qed{\hfill $\Box$} \def\qqed{\qed\vspace{3truemm}}
\def\CS{\Cay(G,S)} \def\CT{\Cay(G,T)}
\def\h{\heiti\bf} \def\hs{\ziti{E}\bf} \def\st{\songti} \def\ft{\fangsong}
\def\kt{\kaishu} \def\heit{\hs\relax} \def\songt{\st\rm\relax}
\def\fangs{\ft\rm\relax} \def\kaish{\kt\rm\relax} \def\fs{\footnotesize}

\newcommand{\JEC}{{\it Europ. J. Combinatorics},  }
\newcommand{\JCTB}{{\it J. Combin. Theory Ser. B.}, }
\newcommand{\JCT}{{\it J. Combin. Theory}, }
\newcommand{\JGT}{{\it J. Graph Theory}, }
\newcommand{\ComHung}{{\it Combinatorica}, }
\newcommand{\DM}{{\it Discrete Math.}, }
\newcommand{\ARS}{{\it Ars Combin.}, }
\newcommand{\SIAMDM}{{\it SIAM J. Discrete Math.}, }
\newcommand{\SIAMADM}{{\it SIAM J. Algebraic Discrete Methods}, }
\newcommand{\SIAMC}{{\it SIAM J. Comput.}, }
\newcommand{\ConAMS}{{\it Contemp. Math. AMS}, }
\newcommand{\TransAMS}{{\it Trans. Amer. Math. Soc.}, }
\newcommand{\AnDM}{{\it Ann. Discrete Math.}, }
\newcommand{\NBS}{{\it J. Res. Nat. Bur. Standards} {\rm B}, }
\newcommand{\ConNum}{{\it Congr. Numer.}, }
\newcommand{\CJM}{{\it Canad. J. Math.}, }
\newcommand{\JLMS}{{\it J. London Math. Soc.}, }
\newcommand{\PLMS}{{\it Proc. London Math. Soc.}, }
\newcommand{\PAMS}{{\it Proc. Amer. Math. Soc.}, }
\newcommand{\JCMCC}{{\it J. Combin. Math. Combin. Comput.}, }
\newcommand{\GC}{{\it Graphs Combin.}, }

\title{ Maximum-Size Independent Sets and Automorphism Groups of Tensor Powers of the Even Derangement Graphs \thanks{
This work is supported by National Natural Science Foundation of
China (No:10971137), the National Basic Research Program (973) of
China (No.2006CB805900),  and a grant of Science and Technology
Commission of Shanghai Municipality (STCSM, No: 09XD1402500)
.\newline \indent
  $^{\dagger}$Correspondent author: Xiao-Dong Zhang
 (Email: xiaodong@sjtu.edu.cn)}}
\author{
 Yun-Ping Deng$^{1}$,  Fu-Ji Xie$^2$ and  Xiao-Dong Zhang$^{1,\dagger}$\\
{\small $^1$Department of Mathematics, Shanghai Jiao Tong University, Shanghai 200240, P.R.China}\\
{\small $^2$ Antai College of Economics \& Management Shanghai Jiao
Tong University, Shanghai
200052, P.R.China}\\
{\small Emails: dyp612@hotmail.com, xiefuji@sjtu.edu.cn, xiaodong@sjtu.edu.cn}\\
 }
\date{}
\maketitle
 \begin{abstract}
   Let $A_n$ be the alternating group of even permutations of $X:=\{1,2,\cdots,n\}$ and ${\mathcal E}_n$ the set of
   even derangements on $X.$ Denote by $A\T_n^q$ the tensor product of $q$ copies of $A\T_n,$
   where the Cayley graph $A\T_n:=\T(A_n,{\mathcal E}_n)$ is called the even derangement graph. In this paper, we intensively investigate
   the properties of $A\T_n^q$ including connectedness, diameter, independence number, clique number, chromatic number and
   the maximum-size independent sets of $A\T_n^q.$ By using the result on the maximum-size independent sets $A\T_n^q$, we completely
   determine the full automorphism groups of $A\T_n^q.$

 \end{abstract}

{{\bf Key words:} Automorphism group; Cayley graph; tensor product; maximum-size independent sets; alternating group. }\\

{{\bf AMS Classifications:} 05C25, 05C69}
\vskip 0.5cm

\section{Introduction}
For a simple graph $\T,$ we use $V(\T), E(\T)$ and $\Aut(\T)$ to
denote its vertex set, edge set and full automorphism group,
respectively. We denote by $N_{\T}(v)$ the neighbourhood of a vertex
$v$ in $\T.$ Let \G be a finite group and $S$ a subset of \G not
containing the identity element $1$ with $S=S^{-1}.$ The {\it Cayley
graph} $\T:=\T(G,S)$ on \G with respect to $S$ is defined by
$$V(\T){=}G,~E(\T){=}\{(g,sg): g{\in} G,\ s{\in} S\}.$$ Clearly, $\T$
is a $|S|$-regular and vertex-transitive graph, since $\Aut(\T)$
contains the right regular representation $R(G)$ of \G. Moreover,
$\T$ is connected if and only if \G is generated by $S.$

Let $S_n$ be the {\it symmetric group} and $A_n$ the {\it
alternating group} on $X=\{1,2,\cdots,n\}.$ Let ${\mathcal
D}_n:=\{\o\in S_n:x^{\,\sigma}\neq x,\forall x\in X\}$ and
${\mathcal E}_n:={\mathcal D}_n\cap A_n$ denote the degrangements
and the even derangements on \X respectively. Then the graph
$\T_n:=\T(S_n,{\mathcal D}_n)$ and $A\T_n:=\T(A_n,{\mathcal E}_n)$
are called the {\it derangement graph} \cite {Paul} and the {\it
even derangement graph} on \X respectively.

The {\it tensor product} $\T_1\otimes\T_2$ of two graphs $\T_1$ and
$\T_2$ is the graph with vertex set $V(\T_1)\times V(\T_2)$ and edge
set consisting of those pairs of vertices $(u_1,u_2),\,(v_1,v_2)$
where $u_1$ is adjacent to $v_1$ in $\T_1$ and $u_2$ is adjacent to
$v_2$ in $\T_2$. A {\it projection} is a homomorphism
$pr_{i,n}:\T^q\rightarrow\T$ given by
$pr_{i,n}(x_1,x_2,\cdots,x_q)=x_i$ for some $i,$ where $\T^q$ is the
tensor product of $q$ copies of a graph $\T.$ By the definition of
tensor product, it is easy to see that $A\T_n^q$ is the Cayley graph
$\T(A_n^q,{\mathcal E}_n^q),$ where $A_n^q$ is the direct product of
$q$ copies of $A_n$ and ${\mathcal
E}_n^q:=\{(\sigma_1,\sigma_2,\cdots,\sigma_q):\sigma_i\in{\mathcal
E}_n,i=1,2,\cdots,q\}.$

A family $I\subseteq S_n$ is {\it intersecting} if any two elements
have at least one common entry. It is easy to see that an
intersecting family of maximal size in $S_n$ corresponds to a
maximum-size independent set in $\T_n.$ In \cite {Cameron}, Cameron
and Ku showed that the only intersecting families of maximal size in
$S_n$ are the cosets of point stabilizers. In \cite {Ku}, Ku and
Wong proved that analogous results hold for the alternating group
and the direct product of symmetric groups, which equivalently shows
that the structure of maximum-size independent sets of $A\T_n$ is as
follows:

\begin{proposition} (Theorem 1.2 in ${{\fs\cite{Ku}}}$)\label{pr-1.1}
All the maximum-size independent sets of $A\T_n~(n\geq5)$ are
$B_{i,j}=\{\sigma\in A_n:\,i^{\sigma}=j\},\,i,j=1,2,\cdots,n.$ In
particular, each $|B_{i,j}|=\frac{(n-1)!}{2}.$
\end{proposition}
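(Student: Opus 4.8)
The plan is to read the statement as an Erd\H{o}s--Ko--Rado problem on $A_n$ and to prove it by the ratio (Hoffman) bound followed by an analysis of the extremal eigenspace. First I would reformulate adjacency: two vertices $\o,\t\in A_n$ are adjacent in $A\T_n$ exactly when $\t\o^{-1}\in\mathcal E_n$, and since $\t\o^{-1}\in A_n$ is automatically even, this happens iff $\t\o^{-1}$ is a derangement, i.e.\ iff $\o$ and $\t$ \emph{disagree} in every coordinate. Hence an independent set of $A\T_n$ is precisely an \emph{intersecting} family in $A_n$ (any two members agree at some point), and each $B_{i,j}=\{\o\in A_n:\,i^{\o}=j\}$ is such a family. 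Since $\{\o\in S_n:i^{\o}=j\}$ is a coset of the point stabiliser $\mathrm{Stab}(i)\cong S_{n-1}$, which contains an odd permutation for $n\ge3$, exactly half of that coset lies in $A_n$, giving $|B_{i,j}|=(n-1)!/2$. It then remains to show that $(n-1)!/2$ is the maximum possible size and that the $B_{i,j}$ are the only families attaining it.

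For the upper bound I would use that $\mathcal E_n$ is inverse-closed and closed under conjugation in $A_n$, so the eigenvalues of $A\T_n$ are
$$\lambda_\chi=\frac1{\chi(1)}\sum_{\o\in\mathcal E_n}\chi(\o),$$
indexed by the irreducible characters $\chi$ of $A_n$; the degree is $\lambda_{\mathbf 1}=|\mathcal E_n|$. The standard character $\chi=\chi^{(n-1,1)}|_{A_n}$, irreducible of degree $n-1$ for $n\ge4$, satisfies $\chi(\o)=\fix(\o)-1=-1$ on every derangement, so it contributes the eigenvalue $-|\mathcal E_n|/(n-1)$. The crucial claim is that this is the least eigenvalue, with equality only for the standard character: $\lambda_\chi\ge-|\mathcal E_n|/(n-1)$ for all non-trivial $\chi$. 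Granting it, Hoffman's bound yields
$$\alpha(A\T_n)\le |A_n|\cdot\frac{-\lambda_{\min}}{|\mathcal E_n|-\lambda_{\min}}=\frac{n!}{2}\cdot\frac1n=\frac{(n-1)!}{2},$$
which matches $|B_{i,j}|$ and is therefore attained.

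For the characterisation I would invoke the equality case: if $I$ is a maximum intersecting family with $0/1$ characteristic vector $x$, then $x-\frac{|I|}{|A_n|}\mathbf 1$ lies in the $\lambda_{\min}$-eigenspace $U$. Using that the minimum is achieved only by the standard character, $U$ is exactly the $\chi^{(n-1,1)}|_{A_n}$-isotypic component, which together with the constants is spanned by the point-indicators $x_{i,j}:\o\mapsto[\,i^{\o}=j\,]$; hence $x\in\mathrm{span}\{x_{i,j}\}$. A now-standard argument then forces the $0/1$ vector $x$ to be the indicator of a single $B_{i,j}$: any other $0/1$ combination of the $x_{i,j}$ either fails to be intersecting or has the wrong size. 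The hypothesis $n\ge5$ enters here to secure irreducibility of the relevant modules and to rule out the small-$n$ coincidences among the candidate extremal families.

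The main obstacle is the eigenvalue estimate $\lambda_\chi\ge-|\mathcal E_n|/(n-1)$ for all non-trivial irreducibles, with equality only in the standard case. Over $S_n$ this is Renteln's theorem, proved by symmetric-function and character estimates; over $A_n$ the extra difficulty is that several $S_n$-classes of derangements split in $A_n$ and some $S_n$-characters split on restriction as $\chi^{\ld}|_{A_n}=\chi^{+}\oplus\chi^{-}$, so one must control these split pairs and their character sums over $\mathcal E_n$. The secondary obstacle is pinning down the eigenspace $U$ precisely and executing the final combinatorial step that no $0/1$ vector of $\mathrm{span}\{x_{i,j}\}$ other than the $B_{i,j}$ is a maximum intersecting family; this is where uniqueness, as opposed to the mere bound, is genuinely established.
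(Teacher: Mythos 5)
Your proposal is not checkable against a proof in the paper, because the paper does not prove this statement at all: Proposition~\ref{pr-1.1} is imported verbatim as Theorem~1.2 of \cite{Ku}, and Ku--Wong's actual argument there is combinatorial, in the Deza--Frankl/Cameron--Ku tradition (clique--coclique bound for the size, then a direct structural analysis for uniqueness), not spectral. So your plan---ratio (Hoffman) bound plus analysis of the extremal eigenspace---is a genuinely different route, the one used in the algebraic Erd\H{o}s--Ko--Rado literature for $S_n$. The preliminary reductions you make are all correct: independence in $A\T_n$ is exactly the intersecting property, $|B_{i,j}|=(n-1)!/2$, $\mathcal{E}_n$ is closed under $A_n$-conjugation so eigenvalues are normalized character sums, and the standard character contributes $-|\mathcal{E}_n|/(n-1)$, which makes the ratio bound come out to exactly $(n-1)!/2$.

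The problem is that both of the steps you defer are precisely the mathematical content, and neither can be waved through. First, the claim that $\lambda_{\min}(A\T_n)=-|\mathcal{E}_n|/(n-1)$, attained only by the standard restriction, does not follow from Renteln's theorem for $S_n$: for a non-self-conjugate partition $\lambda$ the $A\T_n$-eigenvalue is $\frac{1}{2}\bigl(\lambda_{\chi}^{S_n}+\lambda_{\chi\otimes\mathrm{sgn}}^{S_n}\bigr)$ computed with respect to a \emph{different} valency ($|\mathcal{E}_n|\neq |{\mathcal D}_n|/2$; they differ by $(n-1)/2$ up to sign), and for self-conjugate partitions the restriction splits into two irreducibles whose values on the split classes of even derangements carry irrational correction terms that must be controlled separately. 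This is new work, not a citation; indeed the present paper itself only verifies the needed spectral fact by a Matlab computation for $n=5,6$ and then deliberately abandons the spectral method for $n\geq 7$ (Lemmas~\ref{yl-3.4} and \ref{yl-3.6}), which is a strong hint that the general eigenvalue statement you need was not available. Second, your ``now-standard argument'' that a $0/1$ vector in $\mathrm{span}\{x_{i,j}\}$ of the right size and intersecting must be a single $B_{i,j}$ is exactly the uniqueness assertion being proved; for $S_n$ the analogous Boolean-span characterization is itself a theorem requiring real effort (Cameron--Ku, and in sharper form Ellis--Friedgut--Pilpel), and for $A_n$ it would have to be established from scratch, including the identification of $\mathrm{span}\{x_{i,j}\}$ with the trivial-plus-standard isotypic component over $A_n$. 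As written, then, the proposal is a sound strategy outline with the two decisive steps left as acknowledged conjectures, so it does not constitute a proof.
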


In this paper, we prove that the result analogous to \cite{Cameron}
holds for the direct product of the alternating groups, which can be
equivalently stated as follows:

\begin{theorem} \label{dl-1.2}
All the maximum-size independent sets of $A\T_n^q~(q\geq1,n\geq5)$
are
$$B_{i,j}^{(k)}=\{(\sigma_1,\sigma_2,\cdots,\sigma_q)\in A_n^q:
\,i^{\sigma_k}=j\},\,i,j=1,2,\cdots,n;\,k=1,2\cdots,q.$$ In
particular, the independence number of $A\T_n^q$ is
$$|B_{i,j}^{(k)}|=\frac{(n-1)!n!^{q-1}}{2^q}.$$
\end{theorem}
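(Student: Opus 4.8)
The statement asserts two things: first, that the independence number of $A\T_n^q$ equals $\frac{(n-1)!\,n!^{q-1}}{2^q}$, and second, that every maximum-size independent set has the precise "canonical" form $B_{i,j}^{(k)}$ (fixing a single coordinate $k$ and prescribing $i^{\sigma_k}=j$). I would treat the size computation and the structural characterization separately, and I would run an induction on $q$, using Proposition~\ref{pr-1.1} (the $q=1$ case of Ku--Wong) as the base.

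For the independence number, note that each $B_{i,j}^{(k)}$ is genuinely independent in $A\T_n^q$: two elements $(\sigma_1,\dots,\sigma_q)$ and $(\tau_1,\dots,\tau_q)$ lying in $B_{i,j}^{(k)}$ both send $i\mapsto j$ in coordinate $k$, so $\sigma_k\tau_k^{-1}$ fixes $j$, hence $\sigma_k\tau_k^{-1}\notin\mathcal{E}_n$ and the two vertices are non-adjacent. Its cardinality is immediate: $|B_{i,j}^{(k)}|=\frac{(n-1)!}{2}\cdot\left(\frac{n!}{2}\right)^{q-1}=\frac{(n-1)!\,n!^{q-1}}{2^q}$, since one factor is a point-stabilizer coset in $A_n$ and the remaining $q-1$ coordinates range freely over $A_n$. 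So the independence number is \emph{at least} this value. For the matching upper bound, the natural tool is a ratio (Hoffman-type) eigenvalue bound for the tensor power, or a counting/fractional-relaxation argument; since $A\T_n^q$ is vertex-transitive and $\frac{1}{2^q}$ is exactly the ratio one expects, I would aim to show $\alpha(A\T_n^q)\le \frac{|A_n^q|}{2^q}=\frac{(n!/2)^q}{2^{q}}\cdot\frac{(n-1)!}{\,\cdots}$, i.e.\ that no independent set can beat the single-coordinate construction. A clean way is to project: for a maximum independent set $I$, consider its image and fibres under $pr_{q,n}$, which lets me invoke the $q-1$ case on the fibres.

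The structural part is the real content, and I would prove it by induction on $q$, the case $q=1$ being Proposition~\ref{pr-1.1}. Let $I$ be a maximum-size independent set in $A\T_n^q$ with $q\ge2$. For each fixed value $\sigma_q=\pi$ in the last coordinate, the fibre $I_\pi:=\{(\sigma_1,\dots,\sigma_{q-1}):(\sigma_1,\dots,\sigma_{q-1},\pi)\in I\}$ must be an independent set in $A\T_n^{q-1}$, because two elements of $I$ sharing the same last coordinate cannot be adjacent in that coordinate (the difference is $\pi\pi^{-1}=1\notin\mathcal{E}_n$), so adjacency would have to come from the first $q-1$ coordinates. A careful averaging argument over the $|A_n|$ fibres, using the extremal size of $I$, should force \emph{every} nonempty fibre to itself be a \emph{maximum-size} independent set of $A\T_n^{q-1}$, so by the induction hypothesis each nonempty fibre is some canonical set $B^{(k)}_{i,j}$ in the first $q-1$ coordinates. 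The task is then to show that, as $\pi$ varies, these canonical fibres are forced to be "the same constraint," or else they combine to pin down a single coordinate $k$ across all of $I$; the independence condition linking distinct fibres (adjacency in \emph{every} coordinate simultaneously) is what glues the local descriptions into one global $B_{i,j}^{(k)}$.

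The hard part will be exactly this gluing step: ruling out "mixed" independent sets that use one coordinate's constraint on some fibres and a different coordinate's (or a different $(i,j)$ pair's) constraint on others. I expect to need a compatibility/consistency argument showing that if two fibres $I_\pi$ and $I_{\pi'}$ (with $\pi,\pi'$ adjacent in $A\T_n$, i.e.\ $\pi'\pi^{-1}\in\mathcal{E}_n$) are canonical sets with \emph{different} defining data, then one can produce a pair of vertices of $I$ that are adjacent in all $q$ coordinates, contradicting independence; this should force a common coordinate index $k\le q-1$ and compatible values, and a symmetric argument handles the case where the singled-out coordinate is $k=q$ itself. Managing the bookkeeping of which coordinate is fixed, and establishing that it cannot vary from fibre to fibre, is where the delicate combinatorics of derangements (that generic pairs in $A_n$ \emph{are} adjacent) must be used carefully.
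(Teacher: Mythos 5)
Your structural induction rests on a false premise about the tensor product. You assert that for a fixed last coordinate $\pi$ the fibre $I_\pi=\{(\sigma_1,\dots,\sigma_{q-1}):(\sigma_1,\dots,\sigma_{q-1},\pi)\in I\}$ must be an independent set of $A\T_n^{q-1}$, ``because adjacency would have to come from the first $q-1$ coordinates.'' In the tensor product, adjacency requires adjacency in \emph{every} coordinate simultaneously; two vertices that agree in the last coordinate are automatically non-adjacent in $A\T_n^{q}$ (since $1\notin{\mathcal E}_n$), whatever their other coordinates are. Hence the whole slab $A_n^{q-1}\times\{\pi\}$ is independent, and the fibres of an independent set are \emph{arbitrary} subsets of $A_n^{q-1}$. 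A concrete counterexample to your claim is $I=B_{1,1}^{(q)}$ itself: each of its nonempty fibres is all of $A_n^{q-1}$, which is certainly not an independent set of $A\T_n^{q-1}$. Consequently the averaging step (``every nonempty fibre is a maximum-size independent set of $A\T_n^{q-1}$'') and the gluing step built on it collapse, and the same misconception invalidates your projection idea for the upper bound on $\alpha(A\T_n^q)$. This is not a repairable bookkeeping issue: the existence of these independent slabs is precisely why maximum independent sets in tensor powers are delicate (see the paper's remark after Theorem \ref{dl-1.2} that in general they need not be preimages under projections), and why a naive induction on $q$ cannot succeed. Your other suggested tool, a Hoffman-type bound for $A\T_n^q$, would indeed give the correct ratio, but only after one knows that $A\T_n$ itself attains the ratio bound, i.e.\ that its least eigenvalue equals $-|{\mathcal E}_n|/(n-1)$; you never establish this (the paper verifies it only computationally, for $n=5,6$).

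The paper's actual proof is organized around exactly these obstacles. The independence number comes from a product theorem of Alon, Dinur, Friedgut and Sudakov (Lemma \ref{yl-3.1}), not from fibres. For the structure, the case $q=2$, $n\geq7$ is settled directly (Lemma \ref{yl-3.4}): the diagonal homomorphism $g\mapsto(g,g)$ combined with the Albertson--Collins No-Homomorphism Lemma (Lemma \ref{yl-3.2}) forces a maximum independent set containing the identity to contain $\{(g,g):i_0^{g}=i_0\}$; the set is then split into pieces $J_0,J_1,J_2$, and a connectivity-plus-expansion argument (Lemma \ref{yl-3.3}) on the bipartite graph induced on $J_1\cup J_2$ shows $I=J_0\cup J_1$ or $I=J_0\cup J_2$. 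The passage from $q=2$ to arbitrary $q$ is not an elementary induction but a cited theorem of Ku and McMillan (Lemma \ref{yl-3.6}). Finally, for $n=5,6$, where the auxiliary-point selection in the $q=2$ argument fails, the paper instead uses the eigenvalue characterization of Alon et al.\ (Lemma \ref{yl-3.5}) together with the computed least eigenvalues $-6$ and $-26$. None of these ingredients appears in your proposal, so as it stands it proves neither assertion of the theorem beyond the easy lower bound $\alpha(A\T_n^q)\geq\frac{(n-1)!\,n!^{q-1}}{2^q}$.
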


{\bf Remark.} Generally speaking, for a graph $\T,$ all maximum-size
independent sets of $\T^q$ are not necessarily preimages of
maximum-size independent sets of $\T$ under projections (see
\cite{Ku2,Larose2}). Theorem \ref{dl-1.2} shows that all
maximum-size independent sets of $A\T_n^q$ are preimages of
maximum-size independent sets of $A\T_n$ under projections.

Many researchers (see \cite
{Cameron,deza1977,Eggleton1985,Larose,Paul,Rasmussen}) have studied
the properties of $\T_n,$ such as the clique number, the chromatic
number, the independence number, maximum-size independent sets and
so on. Motivated by the nice structures of $\T_n,$ here we show that
$A\T_n^q$ have the similar nice structures. For example, we obtain
that the diameter $D(A\T_n^q)=2,$ the clique number
$\omega(A\T_n^q)=n$ and the chromatic number $\chi(A\T_n^q)=n.$

Cayley graphs are of general interest in the field of Algebraic
Graph Theory due to their good properties, especially their high
symmetry. One difficult problem in Algebraic Graph Theory is to
determine the automorphism groups of Cayley graphs. Although there
are some nice results on the automorphism groups of Cayley graphs
(see \cite {Fang,Feng2,Feng1,Godsil2,Huan,Xu,Z,Zhang}), we still
lack enough understanding on them. In this paper, we completely
determine the automorphism groups of $A\T_n^q,$ which in fact gives
a kind of method on the computation of automorphism group of Cayley
graph by using the characterization of the maximum-size independent
sets. Another main result of this paper is as follows:

\begin{theorem} \label{dl-1.3}
Define the mapping $\varphi_k:\,A_n^q\rightarrow A_n^q$ as
$(\sigma_1,\cdots,\sigma_{k-1},\sigma_k,\sigma_{k+1},\cdots,\sigma_q)^{\varphi_k}=(\sigma_1,\cdots,\sigma_{k-1},\sigma_k^{-1},\sigma_{k+1},\cdots,\sigma_q)$
for $k=1,2,\cdots,q.$ For $q\geq1$ and $n\geq 5,$
$$\Aut(A\T_n^q)=(R(A_n^q)\rtimes (\Inn(S_n)\wr S_q))\rtimes Z_2^q,$$ where $\Inn(S_n)\,(\,\cong S_n)$ is the inner automorphism group of
$S_n,$
$Z_2^q=\langle\varphi_1\rangle\times\langle\varphi_2\rangle\times\cdots\langle\varphi_q\rangle$
and $\,\Inn(S_n)\wr S_q$ denotes the wreath product of $\,\Inn(S_n)$
and $S_q.$
\end{theorem}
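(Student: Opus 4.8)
For the forward inclusion I would first check that each listed map is a graph automorphism: $R(A_n^q)$ always is (the right regular representation of a Cayley graph); each coordinatewise conjugation by $S_n$ preserves the connection set because ${\mathcal E}_n$ is a union of $A_n$-conjugacy classes, while the $S_q$-action permuting the $q$ tensor factors preserves the symmetric tensor structure, so $\Inn(S_n)\wr S_q$ consists of automorphisms; and each $\varphi_k$ is an automorphism because ${\mathcal E}_n={\mathcal E}_n^{-1}$ is conjugation-closed, which makes coordinatewise inversion edge-preserving. Checking the normalization relations among these maps (each identity-fixing map normalizes $R(A_n^q)$, as it lies in the holomorph of $A_n^q$) then shows they generate a subgroup of the stated semidirect-product shape. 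Since this subgroup contains the regular, hence transitive, group $R(A_n^q)$, by the factorization $\Aut(A\T_n^q)=R(A_n^q)\,\Aut(A\T_n^q)_{\mathbf 1}$ (with trivial intersection) it remains to identify the point stabilizer $\Aut(A\T_n^q)_{\mathbf 1}$ at the identity $\mathbf 1=(1,\cdots,1)$, which I claim equals $(\Inn(S_n)\wr S_q)\rtimes Z_2^q$; a comparison of orders then finishes the proof.

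The engine for the stabilizer computation is Theorem~\ref{dl-1.2}: every automorphism permutes the maximum-size independent sets, i.e. the family $\{B_{i,j}^{(k)}\}$, and preserves all their intersection cardinalities. I would compute these cardinalities explicitly; the key facts are that two sets $B_{i,j}^{(k)}$ and $B_{i',j'}^{(k')}$ are disjoint precisely when $k=k'$ and exactly one of $i=i'$, $j=j'$ holds, while sets from different coordinates always meet. Consequently the ``disjointness graph'' on $\{B_{i,j}^{(k)}\}$ is a disjoint union of $q$ connected components, one per coordinate (each an $n\times n$ rook's graph), so every automorphism induces a permutation of the coordinates, an element of $S_q$. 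Because an identity-fixing automorphism preserves the maximum-size independent sets through $\mathbf 1$, which are exactly the ``diagonal'' sets $B_{i,i}^{(k)}$, it also induces, for each coordinate, a permutation of $\{B_{i,i}^{(k)}\}_i$, i.e. an element of $S_n$. This combined $S_n\wr S_q$ datum is realized by $\Inn(S_n)\wr S_q$, so after multiplying $\alpha\in\Aut(A\T_n^q)_{\mathbf 1}$ by a suitable element of $\Inn(S_n)\wr S_q$ I may assume $\alpha$ fixes every diagonal set $B_{i,i}^{(k)}$ and fixes each coordinate.

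For such a normalized $\alpha$ I would pin down its action on the off-diagonal sets. Since $B_{i,j}^{(k)}$ with $i\neq j$ is disjoint from exactly the two diagonal sets $B_{i,i}^{(k)}$, $B_{j,j}^{(k)}$, and $\alpha$ fixes all diagonal sets and preserves disjointness, $\alpha(B_{i,j}^{(k)})$ must be disjoint from exactly $B_{i,i}^{(k)}$, $B_{j,j}^{(k)}$; this forces $\alpha(B_{i,j}^{(k)})\in\{B_{i,j}^{(k)},B_{j,i}^{(k)}\}$. A short consistency argument --- comparing a same-row (hence disjoint) pair $B_{i,j}^{(k)}$, $B_{i,l}^{(k)}$ with $i,j,l$ distinct, and noting that fixing one while swapping the other would make the images share no index and hence meet --- shows the fix/swap choice is uniform within each coordinate. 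Thus on coordinate $k$, $\alpha$ is either the identity or the transpose $B_{i,j}^{(k)}\mapsto B_{j,i}^{(k)}$, and the latter is exactly the action of $\varphi_k$. Multiplying $\alpha$ by the product of those $\varphi_k$ for which a transpose occurs, I may further assume $\alpha$ fixes every $B_{i,j}^{(k)}$ setwise.

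It then remains to invoke a rigidity (separation) fact: a vertex $\sigma=(\sigma_1,\cdots,\sigma_q)$ lies in $B_{i,j}^{(k)}$ iff $i^{\sigma_k}=j$, so the family of maximum-size independent sets containing $\sigma$ records each permutation $\sigma_k$ completely and hence determines $\sigma$. Therefore an automorphism fixing every $B_{i,j}^{(k)}$ setwise fixes every vertex and is the identity. Unwinding the reductions gives $\Aut(A\T_n^q)_{\mathbf 1}=(\Inn(S_n)\wr S_q)\rtimes Z_2^q$; combined with the factorization and the forward inclusion (via a comparison of orders) this yields the theorem. I expect the main obstacle to be the intersection-number bookkeeping underlying the disjointness criterion, together with the observation that fixing $\mathbf 1$ forces the induced grid-automorphisms to preserve the diagonal: this is exactly what excludes independent row/column relabellings (the full $S_n\times S_n$ per coordinate) and leaves only the conjugations and inversions, so that the point stabilizer is no larger than $(\Inn(S_n)\wr S_q)\rtimes Z_2^q$.
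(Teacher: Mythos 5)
Your proposal is correct in substance, but it reaches the theorem by a genuinely different route than the paper. Both arguments share the same engine: Theorem~\ref{dl-1.2}, the disjointness pattern of the sets $B_{i,j}^{(k)}$ (the paper's Lemma~\ref{yl-4.5}), and the fact that an automorphism fixing every $B_{i,j}^{(k)}$ setwise is the identity, since $\{\sigma\}=\bigcap_k\bigcap_i B_{i,i^{\sigma_k}}^{(k)}$ (Lemma~\ref{yl-4.4}). The difference is in how the hard direction is closed. The paper never looks at a point stabilizer: it proves the upper bound $|\Aut(A\T_n^q)|\leq q!\,n!^{2q}$ by a counting argument (Lemmas~\ref{yl-4.6}--\ref{yl-4.10}): automorphisms permute coordinates and send rows/columns to rows/columns coherently, each automorphism is determined by the induced pair $(\phi_1,\phi_2)\in S_n\times S_n$ per coordinate, and the constraint $\phi_1^{-1}\phi_2\in A_n$ (coming from $(1,\cdots,1)^{\phi}\in A_n^q$) supplies the crucial factor $\frac{1}{2}$ per coordinate; this bound is then matched against the lower bound of Lemma~\ref{yl-4.3}. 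You instead use the orbit--stabilizer factorization $\Aut(A\T_n^q)=R(A_n^q)\cdot\Aut(A\T_n^q)_{\mathbf 1}$ and classify the stabilizer directly: the rook's-graph connectivity argument replaces Lemmas~\ref{yl-4.6}--\ref{yl-4.9}, fixing $\mathbf 1$ forces the induced grid permutation to preserve the diagonal (this is exactly where your argument captures the same parity coupling that the paper extracts by counting), successive normalization by $\Inn(S_n)\wr S_q$ and then by the transposes $\varphi_k$ reduces any stabilizer element to one fixing all of $\mathcal B$, and rigidity finishes. Your route is more constructive and avoids the order arithmetic; the paper's route avoids the fix/swap uniformity analysis at the cost of the delicate $\frac{n!^2}{2}$ bookkeeping.

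One parenthetical claim of yours is concretely false, though it does not damage the argument: $\varphi_k$ does \emph{not} lie in the holomorph of $A_n^q$ and does \emph{not} normalize $R(A_n^q)$. Coordinatewise inversion is not a group automorphism of the nonabelian group $A_n^q$, and conjugating a right translation by $\varphi_k$ produces a left translation in coordinate $k$, which lies in $R(A_n^q)\rtimes\Inn(S_n)^q$ but not in $R(A_n^q)$. Your proof survives because the factorization $\Aut(A\T_n^q)=R(A_n^q)\,\Aut(A\T_n^q)_{\mathbf 1}$ needs only transitivity of $R(A_n^q)$, not normality; but the precise semidirect-product shape asserted in the theorem (and which you tried to justify via the holomorph) deserves more care than either you or, for that matter, the paper's Lemma~\ref{yl-4.3} gives it --- what is actually established is that the subgroup generated by $R(A_n^q)$, $\Inn(S_n)\wr S_q$ and $Z_2^q$ has order $q!\,n!^{2q}$ and equals the full automorphism group.
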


{\bf Remark.} Sanders and George \cite {Sanders} showed that for a
graph $\T,$ $\Aut(\T^2)\geq\Aut(\T)\wr S_2,$ where $\wr$ denotes the
wreath product, however, the equality cannot hold in most
situations. Theorem \ref{dl-1.3} implies that
$\Aut(A\T_n^q)=\Aut(A\T_n)\wr S_q.$

The rest part of this paper is organized as follows. In Section 2,
we give the connectedness and diameter of $A\T_n^q.$ In Section 3,
we determine the independence number and the structure of
maximum-size independent sets of $A\T_n^q,$ as its corollary, we
obtain the clique number and chromatic number of $A\T_n^q.$ In
section 4, we completely determine the full automorphism groups of
$A\T_n^q.$

\section{The connectedness and diameter}

In this section, we give the connectedness and diameter of
$A\T_n^q.$

For a group $G,$ we denote the automorphism group and the inner
automorphism group of $G$ by $\Aut(G)$ and $\Inn(G),$ respectively.
Next we need the following known result:

\begin{proposition} ${{\fs\cite{Suzuki}\,[III,\,(2.17)-(2.20)]}}$ \label{pr-2.1}
If $n\geq2$ and $n\neq6,$ then $\Aut(A_n)=\Inn(S_n).$ If $n=6,$ then
$|\Aut(A_6):\Inn(S_6)|=2,$ and for each $\alpha\in
\Aut(A_6){\setminus}\Inn(S_6),$ $\alpha$ maps a $3$-cycle to a
product of two disjoint $3$-cycles.
\end{proposition}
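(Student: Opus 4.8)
The plan is to first embed $\Inn(S_n)$ into $\Aut(A_n)$ and then show that, away from $n=6$, nothing else can occur. For the embedding, recall that $A_n$ is normal in $S_n$, so conjugation by any element of $S_n$ restricts to an automorphism of $A_n$; this gives a homomorphism $S_n\to\Aut(A_n)$ whose kernel is the centralizer $C_{S_n}(A_n)$. For $n\ge 4$ this centralizer is trivial, and since $Z(S_n)=1$ for $n\ge 3$ we obtain an embedding $\Inn(S_n)\cong S_n\hookrightarrow\Aut(A_n)$. It then remains to prove the reverse inclusion for $n\neq 6$; the small cases $n\le 4$ are settled by direct inspection, so the substance of the argument is the range $n\ge 5$.

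The first main step is to show that for $n\ge 5$, $n\neq 6$, every $\alpha\in\Aut(A_n)$ carries the conjugacy class $\mathcal K$ of $3$-cycles to itself. Since $\alpha$ preserves element orders and conjugacy-class sizes, it permutes the classes of order-$3$ elements, each of which consists of products of $k$ disjoint $3$-cycles for some $k\ge 1$. A centralizer computation gives $|C_{S_n}(g)|=3^{k}\,k!\,(n-3k)!$ for such $g$, so the class of $3$-cycles ($k=1$) has the smallest size among order-$3$ classes, and the class of products of two disjoint $3$-cycles ($k=2$) is larger by exactly the factor ${n-3\choose3}$. This factor is $0$ for $n=5$, equals $1$ precisely when $n=6$, and exceeds $1$ for $n>6$; hence for $n\ge 5$, $n\neq 6$ the class $\mathcal K$ is the unique order-$3$ class of its size and is therefore $\alpha$-invariant.

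The second main step reconstructs a permutation from the action of $\alpha$ on $\mathcal K$. Two $3$-cycles commute if and only if their supports are disjoint, and the order of their product distinguishes the cases of overlap in one or two points; since $\alpha$ is an automorphism sending $\mathcal K$ to $\mathcal K$, it preserves this support-incidence pattern. From this one deduces that $\alpha$ permutes the $n$ point stabilizers $H_1,\dots,H_n$ (each isomorphic to $A_{n-1}$), inducing a permutation $\pi\in S_n$ with $\alpha(H_i)=H_{\pi(i)}$. Writing $c_\pi$ for conjugation by $\pi$, the automorphism $\beta:=c_\pi^{-1}\alpha$ fixes every $H_i$ setwise; because each $3$-cycle is determined up to its inverse by the set of stabilizers containing it, and because the inversion map is not an automorphism of the nonabelian $A_n$, $\beta$ must fix every $3$-cycle. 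As $A_n$ is generated by $3$-cycles, $\beta=\mathrm{id}$, whence $\alpha=c_\pi\in\Inn(S_n)$.

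Finally, for $n=6$ the size coincidence found above admits an extra automorphism: one exhibits (or invokes) the exceptional outer automorphism of $S_6$, whose restriction to $A_6$ interchanges the two classes of $40$ order-$3$ elements and hence sends a $3$-cycle to a product of two disjoint $3$-cycles, lying outside $\Inn(S_6)$. Comparing orders, $|\Aut(A_6)|=2\,|S_6|$, so $|\Aut(A_6):\Inn(S_6)|=2$, and the class-swapping property is exactly the stated behaviour on $3$-cycles. I expect the main obstacle to be the reconstruction in the third paragraph: turning the combinatorial fact that $\alpha$ respects support incidence into a genuine permutation $\pi$ and verifying $c_\pi^{-1}\alpha=\mathrm{id}$, which is where the bulk of the case analysis resides.
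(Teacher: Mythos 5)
You should first know that the paper contains no proof of Proposition~\ref{pr-2.1} at all: it is imported from Suzuki \cite{Suzuki} [III, (2.17)--(2.20)], so your attempt can only be measured against the classical argument --- and in outline it is that argument. The embedding $S_n\cong\Inn(S_n)\hookrightarrow\Aut(A_n)$ via $C_{S_n}(A_n)=1$, the invariance of the $3$-cycle class by order-and-size counting (your centralizer formula $3^k\,k!\,(n-3k)!$ and the ratio ${n-3\choose 3}$, degenerating exactly at $n=6$, are correct), the reconstruction of a point permutation from the action on $3$-cycles, and the outer automorphism of $S_6$ restricted to $A_6$ for the exceptional case are precisely the standard steps; note also that this restriction does lie outside $\Inn(S_6)$, because $C_{S_6}(A_6)=1$ forces any automorphism of $S_6$ that is inner on $A_6$ to be inner on $S_6$. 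Two bookkeeping points should be added to your counting step: minimality of the $3$-cycle class must be checked against every $k\geq 2$, not just $k=2$ (routine: $c_k/c_1=(n-3)!/\bigl(3^{k-1}k!\,(n-3k)!\bigr)>1$ for $n\geq 7$), and the comparison must take place among $A_n$-classes, so you need the one-line remark that for $n\geq 5$ no order-$3$ class of $S_n$ splits in $A_n$, since each such centralizer contains an odd permutation (a transposition on two fixed points when $n-3k\geq 2$, an odd interchange of two constituent $3$-cycles when $k\geq 2$).

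The genuine gaps are in the endgame, which you partly flagged yourself. First, ``two $3$-cycles commute if and only if their supports are disjoint'' is false: $(1\,2\,3)$ and $(1\,3\,2)$ commute and have equal supports; repair it by observing that $x,y$ have the same support iff $\langle x,y\rangle$ has order $3$, and disjoint supports iff they commute and $\langle x,y\rangle$ has order $9$. Second, and more seriously, ``the inversion map is not an automorphism of the nonabelian $A_n$'' does not close the argument: once $\beta$ stabilizes every $H_i$, you only know that $\beta$ sends each $3$-cycle to itself or its inverse, and a priori $\beta$ could invert some $3$-cycles while fixing others --- such a map is not globally inversion, so that remark does not apply. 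One excludes all nontrivial sign patterns by product relations: with the paper's right-action convention, $(1\,2\,3)(1\,3\,4)=(1\,2\,4)$, whereas $(1\,3\,2)(1\,3\,4)=(1\,4)(2\,3)$, $(1\,2\,3)(1\,4\,3)=(1\,2)(3\,4)$ and $(1\,3\,2)(1\,4\,3)=(2\,4\,3)$; since $\beta((1\,2\,4))$ must equal $(1\,2\,4)^{\pm1}$, every pattern other than ``fix both'' is impossible, and connectivity of this incidence structure propagates ``fix'' to all $3$-cycles. Finally, for $n=6$ your ``comparing orders'' is circular, since $|\Aut(A_6)|$ is exactly what is being determined. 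The correct finish: your reconstruction, once made precise, uses only preservation of the $3$-cycle class, hence applies verbatim at $n=6$ and shows that the class-preserving automorphisms form exactly the subgroup $\Inn(S_6)$; the class-swapping automorphisms then constitute a single coset of it, nonempty by the outer automorphism, which gives $|\Aut(A_6):\Inn(S_6)|=2$ together with the stated action on $3$-cycles.
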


\begin{lemma} \label {yl-2.2}
If $n\geq 5,$ then the even derangement graph $A\T_n$ is connected.
\end{lemma}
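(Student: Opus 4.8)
The plan is to use the standard criterion recalled in the introduction: a Cayley graph $\T(G,S)$ is connected if and only if $S$ generates $G$. Hence the entire problem reduces to showing that the set ${\mathcal E}_n$ of even derangements generates the alternating group $A_n$ when $n\geq5$. Rather than manufacture a generating set of, say, $3$-cycles by hand, I would exploit the simplicity of $A_n$. Set $N=\lg{\mathcal E}_n\rg$; the goal becomes to prove that $N$ is a nontrivial \emph{normal} subgroup of $A_n$, from which $N=A_n$ follows immediately.

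First I would observe that ${\mathcal E}_n\neq\emptyset$ for $n\geq5$, so that $N\neq1$: an explicit even derangement is easy to exhibit (for instance the $n$-cycle $(1\,2\,\cdots\,n)$ when $n$ is odd, and a product of two suitable cycles covering all of $X$ when $n$ is even). The key point is then that ${\mathcal E}_n$ is a union of conjugacy classes of $S_n$, since conjugation preserves cycle type and therefore sends even derangements to even derangements. In particular ${\mathcal E}_n$ is invariant under conjugation by $A_n$, so for every $g\in A_n$ the conjugate generating set $g{\mathcal E}_ng^{-1}$ again lies in $N$; consequently $gNg^{-1}=N$, i.e.\ $N\trianglelefteq A_n$. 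Since $A_n$ is simple for $n\geq5$ and $N$ is a nontrivial normal subgroup, we conclude $N=A_n$, so ${\mathcal E}_n$ generates $A_n$ and $A\T_n$ is connected.

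There is no serious obstacle in this argument; the two things to be careful about are the nonemptiness of ${\mathcal E}_n$ (to guarantee $N\neq1$) and the necessity of the hypothesis $n\geq5$, which enters precisely through the simplicity of $A_n$. I would emphasize that $n\geq5$ is genuinely needed: for $n=4$ the even derangements are exactly the three double transpositions $(1\,2)(3\,4)$, $(1\,3)(2\,4)$, $(1\,4)(2\,3)$, which generate only the Klein four-group rather than $A_4$, so $A\T_4$ is in fact disconnected. This failure matches the fact that $A_4$ is not simple, confirming that the simplicity-plus-normality strategy is the right one.
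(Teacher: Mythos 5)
Your proof is correct, but it takes a genuinely different route from the paper's. The paper argues constructively: it quotes the fact that $A_n$ ($n\geq5$) is generated by $3$-cycles, exhibits the explicit factorization $(1\,2\,3)=(1\,2\,\cdots\,n)^2\cdot(n\,n-1\,\cdots\,1)^2(1\,2\,3)$ in which both factors lie in ${\mathcal E}_n$ (this is one place where $n\geq5$ enters there), and then conjugates to write every $3$-cycle as a product of two elements of ${\mathcal E}_n$, so that $\langle{\mathcal E}_n\rangle$ contains all $3$-cycles and hence equals $A_n$. You instead observe that ${\mathcal E}_n$ is a nonempty union of $S_n$-conjugacy classes (cycle type determines both parity and fixed-point-freeness), so $N=\langle{\mathcal E}_n\rangle$ is a nontrivial subgroup normalized by every element of $A_n$, and simplicity of $A_n$ for $n\geq5$ forces $N=A_n$. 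Both arguments are sound, and your discussion of $n=4$ (even derangements generate only the Klein four-group, so $A\T_4$ is disconnected) reproduces exactly the paper's remark following the lemma. As for what each approach buys: yours is shorter, requires no explicit computation, and generalizes verbatim to any nonempty conjugation-closed identity-free subset of a simple group; it also isolates the conjugation-invariance of ${\mathcal E}_n$ directly, whereas the paper routes this through Proposition 2.1 on $\Aut(A_n)$, which is heavier machinery than the step actually needs. The paper's proof, on the other hand, avoids appealing to simplicity (generation by $3$-cycles is a more elementary fact) and yields quantitative information — every $3$-cycle is a product of exactly two even derangements — which foreshadows the common-neighbour factorizations used later for the diameter bound in Lemma 2.5 and Theorem 2.6.
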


\begin{proof}
By Theorem 2.8 of page 293 in \cite{Suzuki}, the alternating group
$A_n~(n\geq 5)$ is generated by the totality of $3$-cycles. Clearly
$(1\,2\,3)=(1\,2\,\cdots\,n)^2\cdot(n\,n-1\,\cdots\,1)^2(1\,2\,3)$
and $(1\,2\,\cdots\,n)^2,\,(n\,n-1\,\cdots\,1)^2(1\,2\,3)\in
{\mathcal E}_n$ by $n\geq 5.$

For any $3$-cycle $(i\,j\,k),$ there exists a $\phi\in \Inn(S_n)$
such that $(1\,2\,3)^{\phi}=(i\,j\,k).$ By Proposition~\ref
{pr-2.1}, we have $\Aut(A_n,{\mathcal E}_n)=\{\phi\in
\Aut(A_n):\,{\mathcal E}_n^{\,\phi}={\mathcal E}_n\}=\Inn(S_n).$
Thus

$$(i\,j\,k)=(1\,2\,3)^{\phi}=[(1\,2\,\cdots\,n)^2]^{\phi}\cdot[(n\,n-1\,\cdots\,1)^2(1\,2\,3)]^{\phi}$$
and
$$[(1\,2\,\cdots\,n)^2]^{\phi},\,[(n\,n-1\,\cdots\,1)^2(1\,2\,3)]^{\phi}
\in {\mathcal E}_n.$$ So the alternating group $A_n~(n\geq 5)$ is
generated by ${\mathcal E}_n,$ which implies that $A\T_n$ is
connected.\end{proof}

{\bf Remark.} If $n=3,$ clearly $A_3=\lg {\mathcal E}_3\rg,$ so
$A\T_3$ is connected. If $n=4,$ then $A_4\neq\lg {\mathcal
E}_4\rg=\{1,\,(1\,2)(3\,4),\,(1\,3)(2\,4),\,(1\,4)(2\,3)\},$ so
$A\T_4$ is not connected.

\begin{lemma} ${}^{{\fs\cite{Graham}}}$\label{yl-2.3}
(i) The tensor product of two connected graphs is bipartite if and
only if at least one of them is bipartite.

(ii) The tensor product of two connected graphs is disconnected if
and only if both factors are bipartite.
\end{lemma}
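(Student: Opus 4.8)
The plan is to reduce both statements to a parity analysis of walks, using the elementary observation that a walk in $\T_1\otimes\T_2$ is exactly a pair of walks of the \emph{same} length, one in each factor: the sequence $(x_0,y_0),(x_1,y_1),\ldots,(x_\ell,y_\ell)$ is a walk in $\T_1\otimes\T_2$ if and only if $x_0,x_1,\ldots,x_\ell$ is a walk in $\T_1$ and $y_0,y_1,\ldots,y_\ell$ is a walk in $\T_2$. The single structural fact I would isolate first is this: in a connected non-bipartite graph there is, between any two (not necessarily distinct) vertices, a walk of every sufficiently large length of each parity. Indeed, any walk can be lengthened by $2$ by running along an incident edge and back, while connectivity together with an odd closed walk (which exists because the graph is non-bipartite) lets one splice in a parity-reversing detour; combining these yields both parities for all large lengths.

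For part (i), the ``if'' direction is immediate. Suppose, say, $\T_1$ is bipartite with parts $A$ and $B$. Colour each vertex $(x,y)$ of $\T_1\otimes\T_2$ by the part of $\T_1$ containing $x$; since every edge of $\T_1\otimes\T_2$ requires $x$ to be adjacent to $x'$ in $\T_1$, its endpoints receive different colours, so $\T_1\otimes\T_2$ is bipartite. For the ``only if'' direction I argue the contrapositive. If neither factor is bipartite, then $\T_1$ has an odd closed walk of some length $p$ at a vertex $x_0$ and $\T_2$ has an odd closed walk of some length $q$ at a vertex $y_0$; traversing these $q$ and $p$ times respectively produces closed walks of the common odd length $pq$ in both factors, and pairing them gives an odd closed walk of $\T_1\otimes\T_2$ at $(x_0,y_0)$, so the product is not bipartite.

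For part (ii), the ``if'' direction is again constructive. Given bipartitions $A_1\cup B_1$ and $A_2\cup B_2$ of the two connected bipartite factors, the two sets $(A_1\times A_2)\cup(B_1\times B_2)$ and $(A_1\times B_2)\cup(B_1\times A_2)$ partition $V(\T_1\otimes\T_2)$, and the defining edge condition (simultaneous adjacency in both coordinates) forces the two endpoints of any edge into the same set; as both sets are nonempty, $\T_1\otimes\T_2$ is disconnected. For the ``only if'' direction I prove the contrapositive: assume one factor, say $\T_1$, is non-bipartite, both being connected. Let $(x_0,y_0)$ and $(x',y')$ be arbitrary. Connectivity of $\T_2$ gives a walk from $y_0$ to $y'$ of some length $m$, which can be prolonged to any larger length of the same parity; by the isolated fact, $\T_1$ admits walks from $x_0$ to $x'$ of every large length of both parities. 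Choosing a common length $\ell\geq m$ of the parity of $m$ realizes equal-length walks in both factors, hence a walk in $\T_1\otimes\T_2$ joining $(x_0,y_0)$ to $(x',y')$; thus the product is connected.

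The crux of the whole argument is the parity-matching step: it is not enough that each factor merely contains an odd cycle, one must force the two factor-walks to have \emph{exactly} the same length, and this is precisely where both connectedness and non-bipartiteness are used (via the ``all sufficiently large lengths of both parities'' fact). The only routine caveat is to assume each factor has at least one edge, so that the colour classes and partition classes in the two ``if'' directions are genuinely nonempty; this holds for the even derangement graphs to which the lemma will be applied.
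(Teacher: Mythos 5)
Your proof is correct, but there is nothing in the paper to compare it against: the authors do not prove this lemma at all, they quote it as a known result from the Handbook of Combinatorics \cite{Graham}. What you have reconstructed is essentially the standard argument (Weichsel's theorem), and it is sound. The reduction of walks in $\Gamma_1\otimes\Gamma_2$ to pairs of equal-length walks in the factors is the right structural observation, and your isolated fact --- that a connected non-bipartite graph admits, between any two vertices, walks of every sufficiently large length of each parity --- is exactly the parity-matching tool that both ``only if'' directions need; the colouring argument and the partition into $(A_1\times A_2)\cup(B_1\times B_2)$ and $(A_1\times B_2)\cup(B_1\times A_2)$ settle the two constructive directions. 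Your closing caveat is apt rather than pedantic: as literally stated the lemma fails for degenerate factors (a one-vertex factor makes the product edgeless, so a connected non-bipartite graph tensored with $K_1$ is disconnected even though not both factors are bipartite), so a nontriviality hypothesis is implicit in the citation; in the paper's only application the factors are copies of $A\Gamma_n$ with $n\geq 5$, which are connected, non-bipartite and have edges, so nothing is lost there.
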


\begin{theorem} \label{dl-2.4}
$A\T_n^q$ is connected and non-bipartite for any $q\geq 1$ and
$n\geq 5.$
\end{theorem}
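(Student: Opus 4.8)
The plan is to isolate the one substantive fact—that the base graph $A\T_n$ is connected and non-bipartite—and then propagate it to every exponent $q$ by induction using Lemma~\ref{yl-2.3}. The structural observation that makes this work is that tensor multiplication of graphs is associative, so $A\T_n^q\cong A\T_n^{q-1}\otimes A\T_n$; once the inductive hypothesis supplies a connected factor $A\T_n^{q-1}$, I may regard $A\T_n^q$ as a tensor product of two connected graphs and feed it directly into Lemma~\ref{yl-2.3}.

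For the base case $q=1$, connectedness is exactly Lemma~\ref{yl-2.2}, so the only new ingredient is the non-bipartiteness of $A\T_n$. Here I would invoke the standard criterion that a connected Cayley graph $\Cay(G,S)$ is bipartite if and only if $G$ has a subgroup $H$ of index $2$ with $S\cap H=\emptyset$; in that case the two colour classes are precisely the cosets of the subgroup $H$ consisting of the elements expressible as products of an even number of generators. Since $A_n$ with $n\geq5$ is a non-abelian simple group, it has no subgroup of index $2$ whatsoever, as such a subgroup would be a proper nontrivial normal subgroup. Hence no bipartition of $A\T_n$ can exist, and $A\T_n$ is non-bipartite. (If one prefers an explicit odd cycle, the proof of Lemma~\ref{yl-2.2} already writes the $3$-cycle $(1\,2\,3)=\alpha\beta$ with $\alpha,\beta\in{\mathcal E}_n$, and such relations can be assembled into a triangle; but the simplicity argument is cleaner and uniform in $n$.)

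For the inductive step I assume $A\T_n^{q-1}$ is connected and non-bipartite, and write $A\T_n^q\cong A\T_n^{q-1}\otimes A\T_n$. Both factors are connected, so Lemma~\ref{yl-2.3} applies. By part~(ii), the tensor product of two connected graphs is disconnected only when both factors are bipartite; since neither factor is bipartite, $A\T_n^q$ is connected. By part~(i), the product is bipartite only when at least one factor is bipartite; again neither is, so $A\T_n^q$ is non-bipartite. This closes the induction and yields the claim for all $q\geq1$.

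The only place that requires genuine thought is the non-bipartiteness of $A\T_n$; everything afterwards is formal bookkeeping with Lemmas~\ref{yl-2.2} and~\ref{yl-2.3}. I therefore expect the main obstacle to be stating and justifying the bipartite-Cayley-graph criterion cleanly—equivalently, ruling out an index-$2$ subgroup of $A_n$ via simplicity—after which the induction runs automatically and uniformly in $q$.
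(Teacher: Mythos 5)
Your proposal is correct and follows essentially the same route as the paper: factor $A\T_n^q$ as a tensor product of copies of $A\T_n$ and propagate connectedness and non-bipartiteness through Lemma~\ref{yl-2.3} (the paper suppresses the induction you spell out, but it is the same argument). The one substantive difference is to the paper's credit on your side: the paper's proof cites Lemma~\ref{yl-2.2} for the claim that $A\T_n$ is both connected \emph{and} non-bipartite, whereas that lemma only establishes connectedness; your argument via the bipartite-Cayley-graph criterion and the simplicity of $A_n$ (an index-$2$ subgroup would be proper, nontrivial and normal) correctly supplies the missing non-bipartiteness step, uniformly in $n\geq 5$. One small caution: your parenthetical alternative is shaky as stated, since the relation $(1\,2\,3)=\alpha\beta$ with $\alpha,\beta\in{\mathcal E}_n$ from Lemma~\ref{yl-2.2} gives only a path of length two from the identity to $(1\,2\,3)$, not a triangle (because $(1\,2\,3)\notin{\mathcal E}_n$); producing an explicit odd closed walk takes a separate construction, so it is good that this remark is not load-bearing in your proof.
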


\begin{proof}
Since $A\T_n^q=\underbrace{A\T_n\otimes\cdots\otimes A\T_n}_q$ and
$A\T_n$ is connected and non-bipartite for $n\geq 5$ by Lemma
\ref{yl-2.2}, the assertion holds by Lemma \ref{yl-2.3}.
\end{proof}

\begin{lemma} \label {yl-2.5}
For any $g_1,g_2\in A_n~(n\geq 5),$ there exists a $g\in A_n$ such
that $g\in N_{A\T_n}(g_1)\cap N_{A\T_n}(g_2).$
\end{lemma}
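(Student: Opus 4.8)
The plan is to exploit the vertex-transitivity of $A\T_n$ to normalize the problem. Since the right regular representation $R(A_n)$ acts by automorphisms of $A\T_n$, the map $x\mapsto xg_2^{-1}$ is an automorphism sending $g_2\mapsto 1$ and $g_1\mapsto c:=g_1g_2^{-1}\in A_n$; hence $g_1,g_2$ have a common neighbour if and only if $1$ and $c$ do. Now $N_{A\T_n}(1)={\mathcal E}_n$ and $N_{A\T_n}(c)={\mathcal E}_nc$, so a common neighbour of $1$ and $c$ is precisely an element $g$ with $g\in{\mathcal E}_n$ and $gc^{-1}\in{\mathcal E}_n$. The key observation is that parity comes for free: as $g$ and $c$ are both even, $gc^{-1}$ is automatically even, so I only need $g$ and $gc^{-1}$ to be derangements. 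Unwinding $(gc^{-1})(i)\neq i$ for all $i$ (put $i=c(j)$) gives $g(j)\neq c(j)$ for all $j$, while $g$ being a derangement gives $g(j)\neq j$. Thus the lemma reduces to the purely combinatorial claim: for every even $c\in A_n$ there is an even permutation $g$ with $g(j)\notin\{j,c(j)\}$ for all $j$.

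Next I ignore the evenness of $g$ and first produce some permutation avoiding the two permutations $\mathrm{id}$ and $c$. Encode the admissible images by the balanced bipartite graph on positions $j$ and values $k$ with an edge whenever $k\notin\{j,c(j)\}$. Each position forbids at most two values and, dually, each value $k$ is forbidden at only the positions $j=k$ and $j=c^{-1}(k)$, so every vertex has degree at least $n-2$. For $n\ge 4$ one has $n-2\ge n/2$, and a balanced bipartite graph of minimum degree at least $n/2$ has a perfect matching (standard, via Hall's condition). This yields a permutation $g_0$ with $g_0(j)\notin\{j,c(j)\}$ for all $j$.

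It remains to arrange that $g$ is even, and this parity correction is the main obstacle. If $g_0$ is even I am done; if $g_0$ is odd I look for two positions $i\neq j$ for which exchanging the images stays admissible, i.e. $g_0(j)\notin\{i,c(i)\}$ and $g_0(i)\notin\{j,c(j)\}$. The resulting permutation then differs from $g_0$ by a single transposition of values, hence is even and still avoids $\mathrm{id}$ and $c$. To see such a switch exists, fix $i$ and count the values of $j$ excluded by the four inequalities together with $j\neq i$: these read $j\neq g_0^{-1}(i)$, $j\neq g_0^{-1}(c(i))$, $j\neq g_0(i)$, $j\neq c^{-1}(g_0(i))$ and $j\neq i$, so at most five values of $j$ are forbidden, and for $n\ge 6$ an admissible partner survives. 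For $n=5$ the even permutations $c$ fall into only the four cycle types $\mathrm{id}$, $(abc)$, $(ab)(cd)$ and $(abcde)$; since the reduction is invariant under conjugation, it suffices to exhibit for one representative of each type an even derangement $g$ avoiding $\{j,c(j)\}$, a short finite verification. The delicate points are therefore the counting that guarantees a parity-changing admissible switch and the base case $n=5$, everything else being forced by transitivity and the matching argument.
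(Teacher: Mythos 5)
Your proof is correct, and while it shares the paper's overall skeleton, it differs at the key step in a way worth noting. Both arguments reduce the lemma to producing an \emph{even} permutation $g$ with $g(j)\notin\{j,c(j)\}$ for all $j$, where $c=g_1g_2^{-1}$ (equivalently, writing $c$ as a product of two even derangements); both first produce such a permutation ignoring parity, then repair parity by a transposition whose admissibility excludes five points (hence the same threshold $n\geq 6$); and both treat $n=5$ as a separate base case. The difference is where the initial ``avoiding'' permutation comes from: the paper does not construct it but imports Proposition 6 of Cameron--Ku \cite{Cameron}, which already gives a common neighbour $g\in S_n$ of $g_1,g_2$ in the derangement graph $\T_n$, and then, if $g$ is odd, replaces $g,s_1,s_2$ by $(i\,j)g,(i\,j)s_1,(i\,j)s_2$ for a suitable $j$; you instead build the avoiding permutation from scratch via Hall's theorem applied to the balanced bipartite graph of admissible position--value pairs (minimum degree $n-2\geq n/2$ for $n\geq 4$), and then perform the analogous parity-fixing value swap. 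Your route buys self-containedness---the only external input is Hall's theorem---and your normalization to the pair $(1,c)$ by right translation is cleaner than the paper's direct handling of $g_1,g_2$; the paper's route is shorter but leans on the literature. The one loose end in your write-up is $n=5$: you assert but do not carry out the four-case verification. It does succeed (note that ${\mathcal E}_5$ consists exactly of the $5$-cycles): for $c=\mathrm{id}$ take $g=(1\,2\,3\,4\,5)$; for $c=(1\,2\,3\,4\,5)$ take $g=c^2$; for $c=(1\,2\,3)$ take $g=(1\,3\,2\,5\,4)$; for $c=(1\,2)(3\,4)$ take $g=(1\,4\,5\,2\,3)$. The paper closes this case instead by exhibiting explicit factorizations of each cycle type of $A_5$ as a product of two $5$-cycles, which is the same computation in multiplicative form.
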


\begin{proof} If $n=5,$ we have
$$(a_1,a_2,a_3,a_4,a_5)=(a_1,a_4,a_2,a_5,a_3)^2,$$
$$(a_1,a_2)(a_3,a_4)=(a_5,a_1,a_3,a_2,a_4)(a_1,a_5,a_3,a_2,a_4),$$
$$(a_1,a_2,a_3)=(a_1,a_5,a_3,a_4,a_2)(a_1,a_3,a_5,a_2,a_4),$$
$$1=(a_1,a_2,a_3,a_4,a_5)(a_5,a_4,a_3,a_2,a_1),$$
that is, for any $x\in A_5,$ there exist $s_1,s_2\in {\mathcal E}_5$
such that $x=s_1s_2.$ Now for $x=g_1g_2^{-1},$ we have
$g_1g_2^{-1}=s_1s_2,\,s_1,s_2\in {\mathcal E}_5,$ i.e.
$g_1=s_1s_2g_2.$ Set $g:=s_2g_2.$ Clearly $g\in N_{A\T_5}(g_1)\cap
N_{A\T_5}(g_2).$

If $n\geq 6,$ by proposition 6 in \cite {Cameron}, for any
$g_1,g_2\in A_n,$ there exists a $g\in S_n$ such that $g\in
N_{\T_n}(g_1)\cap N_{\T_n}(g_2).$ That is, there exist $s_1,\,s_2\in
{\mathcal D}_n$ such that $g=s_1g_1=s_2g_2.$ If $g\in A_n,$ then
$s_1,\,s_2\in {\mathcal E}_n,$ so $g\in N_{A\T_n}(g_1)\cap
N_{A\T_n}(g_2)$ and the assertion holds. If $g\in S_n\setminus A_n,$
then $s_1,\,s_2\in {\mathcal D}_n\setminus {\mathcal E}_n.$ For any
$i\in X=\{1,2,\cdots,n\},$ select a
$j\in\{i,i^{s_1},i^{s_2},i^{s_1^{-1}},i^{s_2^{-1}}\}\neq
\emptyset~(n\geq 6).$ Set
$$g':=(i\,j)g,\,s_1':=(i\,j)s_1,\,s_2':=(i\,j)s_2.$$
Thus $g'=s_1'g_1=s_2'g_2$ and $s_1',s_2'\in {\mathcal E}_n$ by $j\in
X\setminus\{i,i^{s_1},i^{s_2},i^{s_1^{-1}},i^{s_2^{-1}}\}.$ Hence
$g'\in N_{A\T_n}(g_1)\cap N_{A\T_n}(g_2)$ and the assertion
holds.\end{proof}

\begin{theorem} \label {dl-2.6}
If $n\geq5,$ then $diam(A\T_n^q)=2,$ where $diam(A\T_n^q)$ is the
diameter of $A\T_n^q.$
\end{theorem}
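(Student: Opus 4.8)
The plan is to establish the two bounds $diam(A\T_n^q)\geq 2$ and $diam(A\T_n^q)\leq 2$ separately. For the lower bound, I would first invoke Theorem \ref{dl-2.4} to note that $A\T_n^q$ is connected, so it suffices to check that $A\T_n^q$ is not a complete graph, which already forces the diameter to be at least $2$. Concretely, the identity vertex $(1,1,\cdots,1)$ and the vertex $((1\,2\,3),1,\cdots,1)$ are not adjacent, since their difference in the first coordinate is the $3$-cycle $(1\,2\,3)$, which fixes the point $4$ (as $n\geq 5$) and hence does not lie in ${\mathcal E}_n$. Equivalently, a $3$-cycle lies in $A_n\setminus{\mathcal E}_n$, so the valency $|{\mathcal E}_n|^q$ of $A\T_n^q$ is strictly smaller than $|A_n|^q-1$ and the graph cannot be complete.

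For the upper bound, the key point is that adjacency in the tensor product $A\T_n^q=\T(A_n^q,{\mathcal E}_n^q)$ is exactly coordinate-wise adjacency in $A\T_n$: two vertices $(g_1,\cdots,g_q)$ and $(h_1,\cdots,h_q)$ are joined by an edge precisely when $h_ig_i^{-1}\in{\mathcal E}_n$ for every $i$. I would therefore take any two distinct vertices $(g_1,\cdots,g_q)$ and $(h_1,\cdots,h_q)$ and, in each coordinate $i$ separately, apply Lemma \ref{yl-2.5} to produce an element $k_i\in A_n$ that is a common neighbour of $g_i$ and $h_i$ in $A\T_n$, i.e. $k_ig_i^{-1}\in{\mathcal E}_n$ and $k_ih_i^{-1}\in{\mathcal E}_n$. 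Assembling these into the single vertex $(k_1,\cdots,k_q)$ then gives a common neighbour of the two chosen vertices in $A\T_n^q$, so their distance is at most $2$. Note that $(k_1,\cdots,k_q)$ is automatically distinct from both, since each $k_ig_i^{-1}$, being fixed-point-free, is non-identity. Together with the lower bound this yields $diam(A\T_n^q)=2$.

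The essential content of the argument is already carried by Lemma \ref{yl-2.5}; the only thing that requires care is checking that one and the same vertex $(k_1,\cdots,k_q)$ serves as a common neighbour in all $q$ coordinates simultaneously. This is immediate from the product form of the connection set ${\mathcal E}_n^q$, because the choices of the $k_i$ in distinct coordinates are completely independent of one another. Consequently I do not expect a genuine obstacle beyond faithfully translating the coordinate-wise statement of Lemma \ref{yl-2.5} into the tensor-product setting.
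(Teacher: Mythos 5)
Your proof is correct and follows essentially the same route as the paper: the heart of the argument is applying Lemma \ref{yl-2.5} coordinate-wise and assembling the resulting common neighbours $k_1,\cdots,k_q$ into a single vertex of $A\T_n^q$, exactly as the paper does. The only difference is that you explicitly verify the lower bound $diam(A\T_n^q)\geq 2$ (non-adjacency of $(1,1,\cdots,1)$ and $((1\,2\,3),1,\cdots,1)$), a point the paper's proof leaves implicit.
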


\begin{proof} For any
$(\sigma_1,\sigma_2,\cdots,\sigma_q),(\tau_1,\tau_2,\cdots,\tau_q)\in
A_n^q,$ by Lemma \ref{yl-2.5}, there exist $\varsigma_i\in
A_n~(i=1,2,\cdots,q)$ such that $\varsigma_i\in
N_{A\T_n}(\sigma_i)\cap N_{A\T_n}(\tau_i).$ So there exists a
$(\varsigma_1,\varsigma_2,\cdots,\varsigma_q)\in A_n^q$ such that
$$(\varsigma_1,\varsigma_2,\cdots,\varsigma_q)\in
N_{A\T_n^q}((\sigma_1,\sigma_2,\cdots,\sigma_q))\cap
N_{A\T_n^q}((\tau_1,\tau_2,\cdots,\tau_q)),$$ which implies that any
two vertices in $A\T_n^q$ have at least a common neighbourhood.
Hence $diam(A\T_n^q)=2.$
\end{proof}

\section{The stucture of maximum-size independent sets}

In this section we characterize the structure of maximum-size
independent sets of $A\T_n^q$ for $q\geq 1,$ which is a
generalization of Theorem 1.2 in \cite {Ku}. First we give the
independence number of $A\T_n^q$ as follows:

\begin{lemma} \label{yl-3.1}
For any $q\geq 1,\,n\geq 5,$ the independence number of $A\T_n^q$ is
given by  $$\alpha(A\T_n^q)=\frac{(n-1)!n!^{q-1}}{2^q}.$$
\end{lemma}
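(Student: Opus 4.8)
The plan is to establish the independence number
$\alpha(A\T_n^q)=\frac{(n-1)!n!^{q-1}}{2^q}$ by proving two matching
inequalities. For the lower bound, I would exhibit an explicit
independent set of the claimed size. The natural candidate is a
preimage of a maximum-size independent set of $A\T_n$ under a single
projection, say $B_{i,j}^{(1)}=\{(\sigma_1,\dots,\sigma_q)\in A_n^q:
i^{\sigma_1}=j\}$. By Proposition~\ref{pr-1.1}, the set
$\{\sigma\in A_n: i^{\sigma}=j\}$ is independent in $A\T_n$ and has
size $\frac{(n-1)!}{2}$. To see $B_{i,j}^{(1)}$ is independent in
$A\T_n^q$, I observe that if two vertices are adjacent in the tensor
product then they must be adjacent in every coordinate, in
particular in the first; but two elements of $A_n$ with the same
image of $i$ cannot be joined by an even derangement, since a
derangement moves $i$. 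Counting gives
$|B_{i,j}^{(1)}|=\frac{(n-1)!}{2}\cdot\left(\frac{n!}{2}\right)^{q-1}
=\frac{(n-1)!\,n!^{q-1}}{2^q}$, which is the desired lower bound.

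For the upper bound, the cleanest route is the ratio (Hoffman /
no-homomorphism) bound for vertex-transitive graphs, or equivalently
an eigenvalue argument applied to the tensor power. Since $A\T_n^q$
is vertex-transitive and, by Theorem~\ref{dl-2.4}, connected and
non-bipartite, its adjacency eigenvalues are controlled by those of
$A\T_n$: the eigenvalues of a tensor product are products of
eigenvalues of the factors. If $d=|{\mathcal E}_n|$ is the valency of
$A\T_n$ and $\tau$ its least eigenvalue, then the least eigenvalue of
$A\T_n^q$ is (up to the sign pattern forced by non-bipartiteness)
governed by $d^{q-1}\tau$, and the Hoffman bound gives
$\alpha(A\T_n^q)\le |A_n|^q\cdot\frac{-\lambda_{\min}}{d^q-\lambda_{\min}}$.
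The point is that the known ratio bound for $A\T_n$ itself is tight,
yielding $\alpha(A\T_n)=\frac{(n-1)!}{2}$, and this tightness
propagates through the tensor power so that the bound matches the
lower bound computed above.

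Alternatively, and perhaps more robustly, I would reduce directly to
the single-factor case via the projections. Because
$A\T_n^q=A\T_n\otimes A\T_n^{q-1}$, any independent set $I$ in the
product projects, and one can bound $|I|$ by combining the
independence number of $A\T_n$ in the first coordinate with the size
$|A_n|^{q-1}=(n!/2)^{q-1}$ of the remaining fibre, proceeding by
induction on $q$. The base case $q=1$ is exactly
Proposition~\ref{pr-1.1}. For the inductive step I would fix a vertex
$\sigma_1$ in the first coordinate and analyse the slice
$I_{\sigma_1}$, using that adjacency in the first coordinate forces
structure across slices; the even derangement graph being connected
and non-bipartite (Theorem~\ref{dl-2.4}) is what prevents the naive
product bound from being exceeded.

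The main obstacle I anticipate is the upper bound, not the lower
bound. The subtlety is that for a general graph $\T$, the independence
number of $\T^q$ can strictly exceed the naive product bound, as the
authors' own Remark after Theorem~\ref{dl-1.2} emphasizes (citing
\cite{Ku2,Larose2}). So one genuinely needs a property special to
$A\T_n$ to force equality. I expect the proof to hinge on the spectral
gap of $A\T_n$ (its least eigenvalue relative to its valency) being
exactly right so that the Hoffman ratio bound is tight for every $q$;
verifying that the tensor power inherits this tightness, and ruling
out the sign ambiguity in $\lambda_{\min}$ by appeal to
non-bipartiteness, is the delicate point. If the spectral route proves
awkward, the inductive projection argument is the fallback, with the
care being in showing no slice-to-slice interaction lets $|I|$ grow
beyond $\frac{(n-1)!\,n!^{q-1}}{2^q}$.
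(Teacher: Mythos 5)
Your lower bound is correct and is the easy half: the preimage $B_{i,j}^{(1)}$ is indeed independent (adjacency in the tensor product forces adjacency in the first coordinate, which a common image of $i$ forbids) and has size $\frac{(n-1)!\,n!^{q-1}}{2^q}$. But the upper bound is where the proof actually lives, and neither of your two routes closes it. The spectral route rests entirely on the assertion that ``the known ratio bound for $A\T_n$ itself is tight,'' i.e.\ that the least eigenvalue of $A\T_n$ equals $-|{\mathcal E}_n|/(n-1)$. You may not assume this: it is not proved anywhere in the paper, and the authors themselves verify it only for $n=5,6$ by a {\bf Matlab} computation (in the proof of Theorem~\ref{dl-1.2}), working around its absence for $n\geq 7$ by the combinatorial Lemma~\ref{yl-3.4} together with Lemma~\ref{yl-3.6}. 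Renteln's tightness result \cite{Paul} is for the derangement graph on $S_n$, not for the even derangement graph on $A_n$, and it does not transfer automatically; so this single unproved claim carries the whole weight of your argument. Your fallback route (induction on $q$ via slices) is a plan rather than a proof: the step you defer --- showing that no slice-to-slice interaction lets $|I|$ exceed the product bound --- is exactly the entire difficulty.

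The idea you are missing is that \emph{no} property special to $A\T_n$ beyond vertex-transitivity is needed for the independence \emph{number}; what requires special properties is the structure of the extremal sets, which is what the counterexamples of \cite{Ku2,Larose2} in the paper's Remark concern, and you have conflated the two questions. The paper's proof of Lemma~\ref{yl-3.1} is two lines: cite Proposition 1.3 of \cite{Alon}, which yields $\alpha(A\T_n^q)/|A_n^q|=\alpha(A\T_n)/|A_n|$, then substitute $\alpha(A\T_n)=\frac{(n-1)!}{2}$ from Proposition~\ref{pr-1.1}. The mechanism behind that citation is elementary and is reused by the paper in Lemma~\ref{yl-3.4}: the diagonal map $g\mapsto(g,g,\cdots,g)$ is a homomorphism from $A\T_n$ into $A\T_n^q$, and $A\T_n^q$ is vertex-transitive (it is a Cayley graph), so the no-homomorphism lemma (Lemma~\ref{yl-3.2}) gives $\alpha(A\T_n)/|A_n|\geq\alpha(A\T_n^q)/|A_n^q|$; your projection lower bound gives the reverse inequality, and equality follows with no eigenvalue information and no induction. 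Had you applied Lemma~\ref{yl-3.2} to the diagonal embedding rather than only to projections, your proof would have been complete.
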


\begin{proof}
By Proposition 1.3 in \cite{Alon}, we have

$$\frac{\alpha(A\T_n^q)}{|A_n^q|}=\frac{\alpha(A\T_n)}{|A_n|}
\Rightarrow\alpha(A\T_n^q)=\frac{\alpha(A\T_n)\cdot|A_n^q|}{|A_n|}.$$
Then by Proposition \ref{pr-1.1}, we obtain
$$\alpha(A\T_n^q)=\frac{\frac{(n-1)!}{2}\cdot(\frac{n!}{2})^q}{\frac{n!}{2}}
=\frac{(n-1)!n!^{q-1}}{2^q}.$$
Thus the assertion holds.\end{proof}

For any two graphs $H_1$ and $H_2,$ a map $\phi$ from $V(H_1)$ to
$V(H_2)$ is {\it homomorphism} if $\{u^{\phi},v^{\phi}\}\in E(H_2)$
whenever $\{u,v\}\in E(H_1),$ i.e. $\phi$ is a edge-preserving map.
Next we need the following fundamental result of Albertson and
Collins \cite{Albertson} which is also called 'No-Homomorphism
Lemma'.

\begin{lemma}  ${{\fs\cite{Albertson}}}$\label{yl-3.2}
Let $H_1$ and $H_2$ be graphs such that $H_2$ is vertex transitive
and there exists a homomorphism $\phi:~V(H_1)\rightarrow V(H_2).$
Then
\begin{equation}\label{2}
\frac{\alpha(H_1)}{|V(H_1)|}\geq\frac{\alpha(H_2)}{|V(H_2)|}
\end{equation}
Furthermore, if equality holds in (\ref{2}), then for any
independent set $I$ of cardinality $\alpha(H_2)$ in $H_2$,
$I^{{\phi}^{-1}}$ is an independent set of cardinality $\alpha(H_1)$
in $H_1.$
\end{lemma}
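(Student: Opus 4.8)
The plan is to prove the inequality by an averaging argument over the group $G:=\Aut(H_2),$ which by hypothesis acts transitively on $V(H_2).$ The starting observation is that the preimage under a homomorphism of an independent set is again independent: if $J$ is an independent set in $H_2$ and two vertices $u,v\in V(H_1)$ with $u^{\phi},v^{\phi}\in J$ were adjacent in $H_1,$ then $\{u^{\phi},v^{\phi}\}\in E(H_2)$ would have both endpoints in $J$ (note $u^{\phi}\neq v^{\phi}$ as $H_2$ is simple), contradicting the independence of $J.$ Now fix a maximum-size independent set $I$ of $H_2,$ so $|I|=\alpha(H_2).$ For each $g\in G$ the translate $I^{g}$ is again a maximum independent set, hence $(I^{g})^{\phi^{-1}}$ is independent in $H_1$ and therefore $|(I^{g})^{\phi^{-1}}|\leq\alpha(H_1).$ Averaging this bound over $g\in G$ gives $\frac{1}{|G|}\sum_{g\in G}|(I^{g})^{\phi^{-1}}|\leq\alpha(H_1).$

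First I would evaluate the left-hand average by double counting. Writing each cardinality as a sum over $v\in V(H_1)$ of the indicator that $v^{\phi}\in I^{g},$ and interchanging the two summations, the average becomes $\frac{1}{|G|}\sum_{v\in V(H_1)}|\{g\in G:\,v^{\phi}\in I^{g}\}|.$ For a fixed vertex $w\in V(H_2)$ the condition $w\in I^{g}$ is equivalent to $w^{g^{-1}}\in I,$ and since $g\mapsto g^{-1}$ permutes $G$ this count equals the number of $g\in G$ with $w^{g}\in I.$ Here vertex-transitivity enters: by the orbit--stabilizer theorem the fibre $\{g\in G:\,w^{g}=u\}$ has constant size $|G|/|V(H_2)|$ for every $u\in V(H_2),$ whence the number of $g$ carrying $w$ into $I$ equals $\alpha(H_2)\cdot|G|/|V(H_2)|,$ \emph{independently of} $w.$ Substituting $w=v^{\phi}$ and summing over the $|V(H_1)|$ vertices of $H_1$ yields $\frac{1}{|G|}\sum_{g\in G}|(I^{g})^{\phi^{-1}}|=|V(H_1)|\cdot\frac{\alpha(H_2)}{|V(H_2)|}.$ Combined with the averaged bound this gives $|V(H_1)|\cdot\frac{\alpha(H_2)}{|V(H_2)|}\leq\alpha(H_1),$ which is precisely inequality~(\ref{2}).

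For the equality clause I would note that the computation above used nothing about $I$ beyond $|I|=\alpha(H_2),$ so it applies verbatim with $I$ replaced by an arbitrary maximum-size independent set $J$ of $H_2:$ one still obtains $\frac{1}{|G|}\sum_{g\in G}|(J^{g})^{\phi^{-1}}|=|V(H_1)|\cdot\frac{\alpha(H_2)}{|V(H_2)|}.$ When equality holds in~(\ref{2}) the right-hand side equals $\alpha(H_1),$ so the average of the quantities $|(J^{g})^{\phi^{-1}}|$ equals $\alpha(H_1);$ since each of them is at most $\alpha(H_1),$ every summand must equal $\alpha(H_1).$ Taking $g$ to be the identity gives $|J^{\phi^{-1}}|=\alpha(H_1),$ and $J^{\phi^{-1}}$ is independent by the opening observation, so it is a maximum-size independent set of $H_1,$ as claimed.

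The step I expect to be the main obstacle is the uniformity in the double count, namely verifying through the orbit--stabilizer theorem that $|\{g\in G:\,w^{g}\in I\}|$ is the same for every vertex $w\in V(H_2).$ This is exactly the point at which vertex-transitivity of $H_2$ is indispensable; without it the average cannot be collapsed into the clean product $|V(H_1)|\cdot\alpha(H_2)/|V(H_2)|,$ and the stated bound would fail. By comparison the equality analysis is a routine ``all summands are tight'' argument, the only subtlety being the observation that, once equality is known, the identity translate already delivers the conclusion for every maximum-size independent set $J.$
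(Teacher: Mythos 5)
Your proof is correct: the averaging of the translates $I^{g}$ over $G=\Aut(H_2)$, the orbit--stabilizer count showing each vertex of $H_2$ lies in the same number of translates, and the ``all summands are tight'' analysis of the equality case are all sound, and together they establish both the inequality~(\ref{2}) and the furthermore clause. Note that the paper itself offers no proof of this lemma --- it is quoted from Albertson and Collins --- and your argument is essentially the standard proof of that No-Homomorphism Lemma, so there is nothing further to reconcile.
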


\begin{lemma} \label{yl-3.3}
Let $H=(V_1,V_2,E)$ be a $d$-regular bipartite graph whose partition
has the parts $V_1$ and $V_2$ with $|V_1|=|V_2|.$ If $H$ is
connected, then $|S|<|N_H(S)|$ for any $S\subsetneq V_1,$ where
$N_H(S)$ is the neighborhood of $S$ in $H.$
\end{lemma}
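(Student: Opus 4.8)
The plan is to prove the non-strict Hall-type bound $|S|\le|N_H(S)|$ by a double-counting argument, and then upgrade it to a strict inequality by analysing exactly when equality occurs and invoking connectivity. Throughout I write $T:=N_H(S)$ and assume $S$ is nonempty (for $S=\emptyset$ the assertion is vacuous), noting that a connected $d$-regular graph on more than one vertex has $d\ge 1$.

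First I would count the edges meeting $S$. Since $H$ is $d$-regular and every neighbour of a vertex of $S$ lies in $T$ by the definition of $T$, the number of edges with an endpoint in $S$ equals $d|S|$, and all of these edges are incident to $T$. On the other hand, the total number of edges incident to $T$ is exactly $d|T|$. Hence $d|S|\le d|T|$, and since $d\ge 1$ this yields $|S|\le|T|=|N_H(S)|$, the inequality in its weak form.

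The hard part will be ruling out equality when $S\subsetneq V_1$. Suppose $|S|=|T|$. Then $d|S|=d|T|$ forces \emph{every} edge incident to $T$ to have its other endpoint in $S$, i.e.\ $N_H(T)\subseteq S$. Combined with $N_H(S)=T$ (so that no edge leaves $S$ except into $T$), this shows that no edge of $H$ joins $S\cup T$ to its complement; consequently $S\cup T$ is a nonempty union of connected components of $H$. Since $H$ is connected, $S\cup T=V_1\cup V_2$, and because $S\subseteq V_1$ while $T\subseteq V_2$, this forces $S=V_1$, contradicting $S\subsetneq V_1$. Therefore equality cannot occur and $|S|<|N_H(S)|$, as required.
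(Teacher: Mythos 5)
Your proof is correct and takes essentially the same route as the paper's: the identical double count $d|S|=|E(S,T)|\le |E(V_1,T)|=d|T|$, followed by the same equality analysis showing that $|S|=|T|$ would leave no edge between $S\cup T$ and its complement, contradicting connectedness. Your explicit exclusion of $S=\emptyset$ (where the stated strict inequality in fact fails, since $N_H(\emptyset)=\emptyset$, so it is false rather than vacuous) is a degenerate case the paper's own proof silently glosses over.
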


\begin{proof}
Let $T=N_H(S)$ and $E(S,T)=\{(s,t)\in E:s\in S,t\in T\}.$ Then
$$d|S|=|E(S,T)|\leq |E(V_1,T)|=d|T|.$$ If
$|S|=|T|,$ then $E(S,T)|=|E(V_1,T)|,$ i.e. any vertex $u\in S\cup T$
is not adjacent to any vertex $v\not\in S\cup T,$ which contradicts
the connectedness of $H.$ Thus $|S|<|T|=|N_H(S)|.$
\end{proof}

\begin{lemma} \label{yl-3.4}
All the maximum-size independent sets of $A\T_n^2~(n\geq7)$ are
$$B_{i,j}^{(k)}=\{(g_1,g_2)\in A_n^2:
\,i^{g_k}=j\},\,i,j=1,2,\cdots,n;\,k=1,2.$$
\end{lemma}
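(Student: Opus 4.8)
The plan is to reduce the problem to a rigid ``line structure'' inside every combinatorial rectangle spanned by a maximum clique in each coordinate, and then to glue these local pieces into a single canonical set using the uniqueness statement of Proposition \ref{pr-1.1}. Throughout, write a vertex of $A\T_n^2$ as a pair $(a,b)\in A_n\times A_n$, and for a maximum-size independent set $I$ introduce the \emph{row fibres} $R_a=\{b\in A_n:(a,b)\in I\}$ and the \emph{column fibres} $C_b=\{a\in A_n:(a,b)\in I\}$. The only local constraint imposed by independence is the \emph{cross-condition}: if $a\sim a'$ in $A\T_n$ then there is no edge of $A\T_n$ between $R_a$ and $R_{a'}$ (and symmetrically for columns), since an edge in both coordinates would produce an edge of $A\T_n^2$ inside $I$. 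That the candidate sets $B_{i,j}^{(k)}$ are maximum-size independent is immediate: each is the $pr_{k,n}$-preimage of the independent set $B_{i,j}$ of $A\T_n$, so it is independent, and its size $\frac{(n-1)!}{2}\cdot\frac{n!}{2}$ equals $\alpha(A\T_n^2)$ by Lemma \ref{yl-3.1}; alternatively this is the equality clause of Lemma \ref{yl-3.2} applied to $pr_{1,n}$, whose two ratios both equal $1/n$.

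The engine is a tight clique--coclique relation. Since $\omega(A\T_n)=n$ (a maximum clique is a sharply transitive set of even permutations) and the clique number of a tensor product is the minimum of the two factors, $\omega(A\T_n^2)=n$; with $\alpha(A\T_n^2)=\frac{(n-1)!\,n!}{4}$ this gives $\alpha(A\T_n^2)\,\omega(A\T_n^2)=|A_n^2|$. As $A\T_n^2$ is vertex transitive, averaging the trivial bound $|C\cap I|\le 1$ over the automorphism group forces every maximum clique to meet $I$ in exactly one vertex. Fix a maximum clique $C=\{a_1,\dots,a_n\}$ of $A\T_n$ in the first coordinate and $K=\{b_1,\dots,b_n\}$ in the second; for each $\sigma\in S_n$ the set $\{(a_{\sigma(k)},b_k):1\le k\le n\}$ is a maximum clique of $A\T_n^2$, so the matrix $M_{i,k}=\mathbf{1}[(a_i,b_k)\in I]$ has every generalized diagonal summing to $1$. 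A short difference argument shows such a matrix has the additive form $M_{i,k}=u_i+v_k$, and among $0/1$ matrices this forces $M$ to be a single all-ones line. Hence $I\cap(C\times K)$ is exactly one full row or one full column of $C\times K$.

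I would then convert this local rigidity into global size information by double counting. Summing $|I\cap(C\times K)|=n$ over all maximum cliques $K$ of the second coordinate, and using that vertex transitivity makes the number of maximum cliques through a vertex a constant, yields $\sum_{a\in C}|R_a|=|A_n|$ for \emph{every} maximum clique $C$, and likewise $\sum_{b\in K}|C_b|=|A_n|$; moreover $|R_a\cap K|\in\{0,1,n\}$ for every $a$ and every maximum clique $K$. This already separates the two expected regimes. If some row fibre contains a full clique, so that $|R_a\cap K|=n$ for that $a$, I aim to upgrade this to $R_a=A_n$ and, via the per-clique sum, to all row fibres lying in $\{\emptyset,A_n\}$, their supports forming a maximum-size independent set of $A\T_n$, identified by Proposition \ref{pr-1.1} as some $B_{i,j}$ and giving $I=B_{i,j}^{(1)}$. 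Otherwise every rectangle is of column type and the symmetric analysis yields $I=B_{i,j}^{(2)}$.

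The main obstacle is precisely this globalization. The rectangle statement constrains one $C\times K$ at a time, and I must show the row/column alternative is selected consistently across all cliques and then fuse the local full lines into a single $B_{i,j}^{(k)}$. Two points are delicate: (i) excluding a proper row fibre $\emptyset\neq R_a\subsetneq A_n$, and (ii) transferring the type of one rectangle to neighbouring ones. For (i) the cross-condition gives $R_{a'}\subseteq A_n\setminus N(R_a)$ for every $a'\sim a$, and Lemma \ref{yl-3.3}---applied to the bipartite double cover of $A\T_n$, which is connected because $A\T_n$ is connected and non-bipartite for $n\ge 5$---yields $|N(R_a)|>|R_a|$, hence $|R_a|+|R_{a'}|<|A_n|$ along each edge; the real work is to amplify this per-edge deficiency into a contradiction with the per-clique identity $\sum_{a\in C}|R_a|=|A_n|$ over a whole maximum clique, and I expect this to be the technical heart. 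For (ii) I would use connectedness of $A\T_n$ together with the fact that every edge extends to a maximum clique (a sharply transitive even set), which is plausibly where the stronger hypothesis $n\ge 7$ is spent. Once proper fibres are ruled out and the type is globally constant, Proposition \ref{pr-1.1} completes the identification $I=B_{i,j}^{(1)}$ or $I=B_{i,j}^{(2)}$.
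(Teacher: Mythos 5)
Your local analysis is correct and genuinely different from the paper's argument: the clique--coclique equality $\alpha(A\T_n^2)\,\omega(A\T_n^2)=|A_n^2|$ (using $\omega(A\T_n)=n$ from \cite{Ku} and the fact that a clique in a tensor product projects to cliques in both factors), the averaging argument giving $|C\cap I|=1$ for every maximum clique $C$ of $A\T_n^2$, and the generalized-diagonal matrix argument forcing $I\cap(C\times K)$ to be one full row or one full column are all sound. The problem is that the proof effectively stops there: the passage from ``every rectangle $C\times K$ meets $I$ in a line'' to ``$I=B_{i,j}^{(1)}$ or $I=B_{i,j}^{(2)}$'' is precisely the content of the lemma, and you leave it as a plan, saying yourself it is ``the technical heart.'' Moreover, the two tools you propose for this globalization do not work as stated. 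First, the assertion that every edge of $A\T_n$ extends to a maximum clique is unproven and far from obvious: a size-$n$ clique of $A\T_n$ is a sharply transitive set all of whose elements are \emph{even} permutations, and while any $2\times n$ Latin rectangle completes to a Latin square, nothing guarantees a completion whose rows are all even; even the bare statement $\omega(A\T_n)=n$ is a cited theorem. Second, the per-edge deficiency cannot be amplified against the per-clique identity in the way you hope: if $R_a$ is proper and nonempty, Lemma \ref{yl-3.3} gives $|R_{a'}|<|A_n|-|R_a|$ for every $a'\sim a$, so over a maximum clique $C\ni a$ you only get $\sum_{a''\in C}|R_{a''}|<|R_a|+(n-1)(|A_n|-|R_a|)$, an upper bound far above $|A_n|$; likewise, summing $|R_a|+|R_{a'}|<|A_n|$ over all $n(n-1)/2$ pairs of $C$ gives only $\sum_{a\in C}|R_a|<\frac{n}{2}|A_n|$. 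Neither contradicts $\sum_{a\in C}|R_a|=|A_n|$, so proper nonempty fibres are not excluded; closing this would seem to require genuine expansion (spectral) estimates for $A\T_n$, which the paper does not possess for $n\ge7$ (it computes eigenvalues only for $n=5,6$).

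For contrast, the paper avoids the gluing problem altogether: it seeds $I$ via the diagonal homomorphism $g\mapsto(g,g)$ and the equality clause of Lemma \ref{yl-3.2}, so that after normalization $I\supseteq\{(g,g):i_0^g=i_0\}$; it then shows (Claim 1, using the common-neighbour Lemma \ref{yl-2.5} and a $3$-cycle correction, which is where $n\ge7$ is actually spent) that every $(g_1,g_2)\in I$ fixes $i_0$ in some coordinate; finally it rules out mixing between $J_1$ and $J_2$ by connectivity of $A\T_n^2[J_1\cup J_2]$ together with the same bipartite expansion Lemma \ref{yl-3.3} that you invoke, and concludes by counting. Your rectangle rigidity could conceivably be developed into an alternative proof, but as submitted the decisive step is missing, so the proposal does not prove the lemma.
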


\begin{proof}
Set ${\mathcal B}=\{B_{i,j}^{(k)}:i,j=1,2,\cdots,n;\,k=1,2\}.$
Clearly $|B_{i,j}^{(k)}|=\frac{(n-1)!n!}{4},$ which is equal to
$\alpha(A\T_n^2)$ by Lemma \ref{yl-3.1}. That is, $B_{i,j}^{(k)}$ is
a maximum-size independent set of $A\T_n^2.$ Next for any
maximum-size independent set $I$ of $A\T_n^2,$ it suffices to show
that $I\in{\mathcal B}.$

Define a homomorphism $\phi$ from $A\T_n$ to $A\T_n^2$ as
$g^{\phi}=(g,g).$ Without loss of generality, we may assume that the
identity $Id=(id,id)\in I.$ By Proposition \ref{pr-1.1}, Lemmas
\ref{yl-3.1} and \ref{yl-3.2}, $I^{{\phi}^{-1}}$ is a maximum
independent set of $A\T_n.$ So $I^{{\phi}^{-1}}=\{g\in
A_n:~i_0^g=j_0\}$ for some $i_0,\,j_0$ by Proposition \ref{pr-1.1}.
Since $id=(Id)^{{\phi}^{-1}}\in I^{\phi^{-1}},$
$I^{\phi^{-1}}=\{g\in A_n:~i_0^g=i_0\}$ for some $i_0.$ Therefore
$I\supseteq I_0:=(I^{{\phi}^{-1}})^{\phi}=\{(g,g)\in
A_n^2:~i_0^g=i_0\}.$ Next we shall show that $I\in{\mathcal B}$ by the following four Claims:\\

{\bf Claim 1.} For any $(g_1,g_2)\in I,$ either $i_0^{g_1}=i_0$ or
$i_0^{g_2}=i_0.$

Suppose on the contrary that $i_0^{g_1}\neq i_0$ and $i_0^{g_2}\neq
i_0.$ By Lemma \ref{yl-2.5}, there exists a $g\in A_n$ such that
$g\in N_{A\T_n}(g_1)\cap N_{A\T_n}(g_2).$ That is, there exist
$s_1,s_2\in {\mathcal E}_n$ such that $g=s_1g_1=s_2g_2.$

If $i_0^g=i_0,$ then $(g,g)\in I_0\subseteq I$ and
$\{(g,g),(g_1,g_2)\}\in E(A\T_n^2),$ which contradicts the fact that
$(g_1,g_2)\in I$ and $I$ is an independent set.

If $i_0^g\neq i_0,$ select a $j\in X\setminus
\{i_0,i_0^{g^{-1}},i_0^{s_1},i_0^{s_2},
i_0^{g^{-1}s_1^{-1}},i_0^{g^{-1}s_2^{-1}}\}\neq\emptyset~(n\geq 7).$
Set
$$g'=(i_0,i_0^{g^{-1}},j)g,\,s_1'=(i_0,i_0^{g^{-1}},j)s_1,\,s_2'=(i_0,i_0^{g^{-1}},j)s_2.$$
Thus $i_0^{g'}= i_0,\,g'=s_1'g_1=s_2'g_2$ and $s_1',s_2'\in
{\mathcal E}_n$ by $j\in X\setminus
\{i_0,i_0^{g^{-1}},i_0^{s_1},i_0^{s_2},
i_0^{g^{-1}s_1^{-1}},i_0^{g^{-1}s_2^{-1}}\}.$ So $(g',g')\in
I_0\subseteq I$ and$\{(g',g'),(g_1,g_2)\}\in E(A\T_n^2),$ which as
above yields a contradiction.

Hence Claim 1 holds.\\

Set $$J_0=\{(g_1,g_2)\in A_n^2:i_0^{g_1}=i_0~and~i_0^{g_2}=i_0\},$$
    $$J_1=\{(g_1,g_2)\in A_n^2:i_0^{g_1}=i_0~and~i_0^{g_2}\neq i_0\},$$
    $$J_2=\{(g_1,g_2)\in A_n^2:i_0^{g_1}\neq i_0~and~i_0^{g_2}=i_0\}.$$

Clearly $|J_0|=\frac{(n-1)!^2}{4},|J_1|=|J_2|=\frac{(n-1)(n-1)!^2}{4}.$\\

{\bf Claim 2.} $A\T_n^2[J_1\cup J_2]$ is connected, where
$A\T_n^2[J_1\cup J_2]$ denote the induced subgraph of $A\T_n^2$ by
$J_1\cup J_2.$

For any $(\sigma_1,\sigma_2),(\tau_1,\tau_2)\in J_1,$ clearly they
are not adjacent in $A\T_n^2.$ By Theorem \ref{dl-2.6}, there exists
a $(\varsigma_1,\varsigma_2)\in A_n^2$ such that
$\{(\sigma_1,\sigma_2),(\varsigma_1,\varsigma_2)\}$ and
$\{(\tau_1,\tau_2),(\varsigma_1,\varsigma_2)\}\in E(A\T_n^2).$ That
is, there exist $s_1,s_2,t_1,t_2\in {\mathcal E}_n$ such that
$\varsigma_1=s_1\sigma_1=t_1\tau_1,\,\varsigma_2=s_2\sigma_2=t_2\tau_2.$
Clearly $i_0^{\varsigma_1}=i_0^{s_1\sigma_1}\neq i_0.$

If $i_0^{\varsigma_2}=i_0,$ then $(\varsigma_1,\varsigma_2)\in J_2.$

If $i_0^{\varsigma_2}\neq i_0,$ then select a $j\in X\setminus
\{i_0,i_0^{\varsigma_2^{-1}},i_0^{s_2},i_0^{t_2},
i_0^{\varsigma_2^{-1}s_2^{-1}},i_0^{\varsigma_2^{-1}t_2^{-1}}\}\neq\emptyset~(n\geq
7).$ Set
$$\varsigma_2'=(i_0,i_0^{\varsigma_2^{-1}},j)\varsigma_2,\,s_2'=(i_0,i_0^{\varsigma_2^{-1}},j)s_2,\,t_2'=(i_0,i_0^{\varsigma_2^{-1}},j)t_2.$$
Thus and $i_0^{\varsigma_2'}=
i_0,\,\varsigma_2'=s_2'\sigma_2=t_2'\tau_2$ and $s_2',t_2'\in
{\mathcal E}_n$ by $j\in X\setminus
\{i_0,i_0^{\varsigma_2^{-1}},i_0^{s_2},i_0^{t_2},
i_0^{\varsigma_2^{-1}s_2^{-1}},i_0^{\varsigma_2^{-1}t_2^{-1}}\}.$ So
$(\varsigma_1,\varsigma_2')\in J_2$ and
$\{(\sigma_1,\sigma_2),(\varsigma_1,\varsigma_2')\},\{(\tau_1,\tau_2),(\varsigma_1,\varsigma_2')\}\in
E(A\T_n^2[J_1\cup J_2]).$

Similarly, for any $(\sigma_1,\sigma_2),(\tau_1,\tau_2)\in J_2,$
their exists $(\varsigma_1,\varsigma_2)\in J_1$ such that
$\{(\sigma_1,\sigma_2),(\varsigma_1,\varsigma_2)\},\\
\{(\tau_1,\tau_2),(\varsigma_1,\varsigma_2)\}\in
E(A\T_n^2[J_1\cup J_2]).$

Hence Claim 2 holds.\\

{\bf Claim 3.} Either $I\cap J_1=\emptyset$ or $I\cap
J_2=\emptyset.$

Suppose on the contrary that $I\cap J_1\neq\emptyset$ and $I\cap
J_2\neq\emptyset,$ consider the following two possible cases:

{\bf Case 1.} $I\cap J_1=J_1$ or $I\cap J_2=J_2.$

Since $I\cap(J_1\cup J_2)$ is an independent set, this case cannot
happen.

{\bf Case 2.} $I\cap J_1\subsetneq J_1$ and $I\cap J_2\subsetneq
J_2.$

It is easy to see that $A\T_n^2[J_1\cup J_2]$ is a regular bipartite
graph whose partition has the parts $J_1$ and $J_2$ with
$|J_1|=|J_2|.$ By Claim 2 and Lemma \ref{yl-3.3}, we have
$$|I\cap J_1|<|N_{A\T_n^2[J_1\cup J_2]}(I\cap J_1)|.$$
Since $I\cap(J_1\cup J_2)$ is an independent set, we have
\begin{eqnarray*}
&&I\cap J_2\subseteq J_2\setminus N_{A\T_n^2[J_1\cup J_2]}(I\cap J_1)\\
&\Rightarrow&|N_{A\T_n^2[J_1\cup J_2]}(I\cap J_1)|+|I\cap J_2|\leq
|J_2|\\
&\Rightarrow&|I\cap J_1|+|I\cap J_2|<|J_2|
\end{eqnarray*}
By Claim 1, $I=\bigcup_{i=0}^2(I\cap J_i).$ Since $J_i~(i=0,1,2)$
are pairwise disjoint, we have
\begin{eqnarray*}
|I|&=&|I\cap J_0|+|I\cap J_1|+|I\cap J_2|\\
&<&|J_0|+|J_2|\\
&=&\frac{(n-1)!^2}{4}+\frac{(n-1)(n-1)!^2}{4}\\
&=&\frac{(n-1)!n!}{4}
\end{eqnarray*}
which is a contradiction, since $|I|=\frac{(n-1)!n!}{4}$ by Lemma
\ref{yl-3.1}. Hence Claim 3 holds.\\

{\bf Claim 4.} Either $I=J_0\cup J_1$ or $I=J_0\cup J_2.$

By Claim 3, either $I\cap J_1=\emptyset$ or $I\cap J_2=\emptyset.$
If $I\cap J_1=\emptyset,$ then we have
\begin{eqnarray*}
&&\frac{(n-1)!n!}{4}=|I|=|I\cap J_0|+|I\cap J_2|\leq |J_0|+|J_2|=\frac{(n-1)!n!}{4}\\
&\Rightarrow&I\cap J_0=J_0,\,I\cap J_2=J_2.
\end{eqnarray*}

So $I=\bigcup_{i=0}^2(I\cap J_i)=J_0\cup J_2.$

Similarly, if $I\cap J_2=\emptyset,$ then $I=J_0\cup J_1.$ Hence
Claim 4 holds.\\

By Claim 4, we have $I\in B,$ which conclude the proof.
\end{proof}

\begin{lemma} ${{\fs\cite{Alon}}}$\label{yl-3.5}
Let $\T$ be a connected d-regular graph on n vertices and let
$d=\mu_1\geq\mu_2\geq\cdots\mu_n$ be the eigenvalues of the
adjacency matrix of $\T.$ If
$$\frac{\alpha(\T)}{n}=\frac{-\mu_n}{d-\mu_n},$$ then for every
integer $q\geq1,$
$$\frac{\alpha(\T^q)}{n^q}=\frac{-\mu_n}{d-\mu_n}.$$
Moreover, if $\T$ is also non-bipartite, and if $I$ is an
independent set of size $\frac{-\mu_n}{d-\mu_n}n^q$ in $\T^q,$ then
there exists a coordinate $i\in\{1,2,\cdots,q\}$ and a maximum-size
independent set $J$ in $\T,$ such that
$$I=\{(v_1,\cdots,v_q\in V(\T^q):v\in J\}.$$
\end{lemma}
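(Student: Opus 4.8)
The plan is to derive both assertions from the Hoffman (ratio) bound, carried through the Kronecker-product structure of the tensor power and supplemented by an analysis of the equality case. First I would record the spectrum of $\T^q$: if $A$ is the adjacency matrix of $\T$, then $\T^q$ has adjacency matrix $A^{\otimes q}$, so its eigenvalues are the products $\mu_{i_1}\cdots\mu_{i_q}$ with eigenvectors the corresponding tensor products of eigenvectors of $A$. Since $\T$ is connected and non-bipartite, $\mu_1=d$ is simple and $d>-\mu_n$, hence $d>|\mu_i|$ for all $i\ge 2$. A short magnitude/sign estimate then identifies the least eigenvalue of $\T^q$ as $d^{\,q-1}\mu_n$: a product using $k\ge 1$ non-principal factors has modulus at most $B^{k}d^{\,q-k}$ with $B=\max_{i\ge 2}|\mu_i|$, and realising a \emph{negative} value of modulus $d^{\,q-1}|\mu_n|$ forces, when $B=|\mu_n|$, exactly one factor equal to $\mu_n$ and all others equal to $d$. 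Thus the $d^{\,q-1}\mu_n$-eigenspace of $A^{\otimes q}$ is spanned by the ``dictator'' tensors $\mathbf{1}\otimes\cdots\otimes w\otimes\cdots\otimes\mathbf{1}$, where $w$ runs over the $\mu_n$-eigenspace of $A$ sitting in a single coordinate.

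Granting this, the first assertion would be immediate. The ratio bound applied to the $d^{q}$-regular graph $\T^q$ gives $\alpha(\T^q)/n^{q}\le -\lambda_{\min}/(d^{q}-\lambda_{\min})=-\mu_n/(d-\mu_n)$ with $\lambda_{\min}=d^{\,q-1}\mu_n$; conversely, for a maximum independent set $J_0$ of $\T$ the cylinder $\{x:x_1\in J_0\}$ is independent in $\T^q$ and attains exactly that density, so equality holds and $\alpha(\T^q)/n^{q}=-\mu_n/(d-\mu_n)$.

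For the structure I would take a maximum independent set $I$, set $c=|I|/n^{q}=-\mu_n/(d-\mu_n)$ and $f=\mathbf{1}_I-c\mathbf{1}$. Expanding $\mathbf{1}_I^{\top}A^{\otimes q}\mathbf{1}_I=0$ and splitting off the constant component, equality in the ratio bound forces $f$ to lie in the least eigenspace of $A^{\otimes q}$; by the first step this means $f=\sum_{i=1}^{q}h_i$, where each $h_i$ depends only on the $i$-th coordinate and is orthogonal to the constants, so $\mathbf{1}_I(x_1,\dots,x_q)=c+\sum_i h_i(x_i)$. Fixing all but two coordinates, the restriction $c'+h_i(x_i)+h_j(x_j)$ is $\{0,1\}$-valued, so $h_i(x_i)+h_j(x_j)$ lies in a two-element set; since the sumset of the value-sets of two non-constant real functions has size at least three, at most one $h_i$ is non-constant, and being orthogonal to the constants it is then the only nonzero summand. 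Hence $\mathbf{1}_I$ depends on a single coordinate $i_0$, i.e. $I=\{x:x_{i_0}\in J\}$ with $J=\{v:c+h_{i_0}(v)=1\}$; and $J$ is forced to be independent in $\T$ (otherwise a suitable pair of points of $I$ would be adjacent in $\T^q$) of size $cn=\alpha(\T)$, hence a maximum independent set of $\T$, as claimed.

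The hard part is the eigenspace step: showing the least eigenvalue of $\T^q$ is $d^{\,q-1}\mu_n$ with eigenspace consisting only of single-coordinate tensors. The two-coordinate case is excluded unconditionally, since a product $d\mu_n<0$ would need a positive factor $\le\mu_2<d$ times a negative factor of modulus $\le|\mu_n|$, of modulus $<d|\mu_n|$; but ruling out three or more non-trivial coordinates relies on the dominance $|\mu_n|\ge\mu_2$, i.e. on $-\mu_n$ being the non-principal eigenvalue of largest modulus. This dominance is exactly the spectral input that converts the ratio-bound equality into a dictatorship, and for the even derangement graph it is available from its known spectrum.
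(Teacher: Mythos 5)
There is no proof in the paper to compare against: Lemma 3.5 is quoted from \cite{Alon} and used as a black box, so your proposal stands or falls on its own. Its skeleton is the standard (and correct) one, essentially that of the cited source: Hoffman's ratio bound applied to $\T^q$, the equality case forcing $\mathbf{1}_I-c\mathbf{1}$ into the least eigenspace of $A^{\otimes q}$, identification of that eigenspace with single-coordinate tensors, and the sumset argument showing that a $\{0,1\}$-valued function of the form $c+\sum_i h_i(x_i)$ with each $h_i$ orthogonal to constants has at most one nonzero $h_i$. Those parts of your write-up are sound.

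The genuine gap is the step you flag in your final paragraph, and your diagnosis of it is wrong in a way that matters. You establish the key spectral fact --- that $\lambda_{\min}(\T^q)=d^{\,q-1}\mu_n$ with eigenspace spanned by the dictator tensors --- only under the extra hypothesis $B=\max_{i\ge2}|\mu_i|=|\mu_n|$, i.e. $-\mu_n\ge\mu_2$, and you assert that this dominance is genuinely needed (``ruling out three or more non-trivial coordinates relies on the dominance''). It is not needed, and since the lemma is stated for arbitrary connected regular graphs meeting the ratio-bound equality, with no condition on $\mu_2$, your proof as written establishes only a special case of the statement. The repair is to bound negative and non-negative factors separately instead of lumping all non-principal factors under $B$: a negative product $\mu_{i_1}\cdots\mu_{i_q}$ has an odd number $m\ge1$ of negative factors, each of modulus at most $|\mu_n|$, while each non-negative factor is at most $d$; hence its modulus is at most $|\mu_n|^{m}d^{\,q-m}=|\mu_n|\,d^{\,q-1}\,(|\mu_n|/d)^{m-1}\le|\mu_n|\,d^{\,q-1}$, strictly when $m\ge3$, because $|\mu_n|<d$ for a connected non-bipartite graph. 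Equality therefore forces $m=1$, the unique negative factor equal to $\mu_n$, and every other factor equal to $d$, whose eigenspace is spanned by $\mathbf{1}$ since $d$ is a simple eigenvalue. This yields your eigenspace claim unconditionally, with $\mu_2$ never entering the argument. One smaller point: the first assertion of the lemma does not assume non-bipartiteness, while your argument assumes it from the outset; the bipartite case needs the one-line remark that then $\mu_n=-d$, $\lambda_{\min}(\T^q)=-d^{\,q}$, and both sides of the claimed identity equal $1/2$.
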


\begin{lemma} ${{\fs\cite{Ku2}}}$\label{yl-3.6}
Let $\T$ be a connected, non-bipartite vertex-transitive graph.
Suppose that the only independent sets of maximal cardinality in
$H^2$ are the preimages of the independent sets of maximal
cardinality in $\T$ under projections. Then the same holds for all
powers of $\T.$
\end{lemma}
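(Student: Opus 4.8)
The plan is to argue by induction on $q$, with the No-Homomorphism Lemma (Lemma \ref{yl-3.2}) as the main engine. The base cases $q=1$ (trivial) and $q=2$ (the hypothesis) are in hand, so I would assume the conclusion for all powers up to $q$ and treat $\T^{q+1}$, written as $\T\otimes\T^q$ with first coordinate in $V(\T)$ and the last $q$ coordinates forming $\T^q$. Writing $n=|V(\T)|$ and $\alpha=\alpha(\T)$, the hypothesis gives $\alpha(\T^2)=\alpha n$. First I would record the independence number of each power: the canonical set $pr_i^{-1}(J)$ (the preimage of a maximum independent set $J$ of $\T$ under the $i$th projection) is independent of size $\alpha n^q$, while the diagonal map $\Delta:\T\to\T^{q+1}$, $v\mapsto(v,\dots,v)$, is a homomorphism into the vertex-transitive graph $\T^{q+1}$, so Lemma \ref{yl-3.2} yields $\alpha(\T^{q+1})\le \frac{\alpha}{n}\,n^{q+1}=\alpha n^{q}$; hence $\alpha(\T^{q+1})=\alpha n^{q}$. (One is tempted to invoke Lemma \ref{yl-3.5} directly, but its spectral hypothesis $\alpha/n=-\mu_n/(d-\mu_n)$ is not assumed here, so a combinatorial induction is genuinely needed.)

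The heart of the inductive step is to pull a given maximum independent set $I$ of $\T^{q+1}$ back along homomorphisms for which the inequality of Lemma \ref{yl-3.2} is tight, so that its equality clause applies. For each coordinate $i$, the partial-diagonal map $\psi_i:\T^2\to\T^{q+1}$ that places the first argument in coordinate $i$ and the second in all other coordinates is a homomorphism, and since $\alpha(\T^2)/n^2=\alpha/n=\alpha(\T^{q+1})/n^{q+1}$ equality holds; thus $\psi_i^{-1}(I)$ is a maximum independent set of $\T^2$, which by the hypothesis is controlled by a single coordinate. Reading this off for every $i$, together with the analogous pullbacks along the coordinate-duplication maps $\T^q\to\T^{q+1}$ (whose preimages are maximum independent sets of $\T^q$, hence single-coordinate by the inductive hypothesis), pins down a distinguished coordinate $i^{*}$ and a maximum independent set $J$ of $\T$ such that $I$ agrees with $pr_{i^{*}}^{-1}(J)$ on all ``degenerate'' configurations, namely those whose coordinate values are not all distinct.

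It then remains to propagate this agreement to all of $V(\T^{q+1})$, and this is where I expect the real difficulty to lie. In the manner of the proof of Lemma \ref{yl-3.4}, I would partition the vertices according to which coordinates witness the control pattern, obtaining an analogue of the sets $J_0,J_1,J_2$, but now with $q+1$ competing coordinates rather than two. On the induced bipartite graph between two competing parts one would use that $\T^{q+1}$ is connected and non-bipartite (Lemma \ref{yl-2.3}) together with the expansion estimate of Lemma \ref{yl-3.3} to conclude that no proper mixture of two control patterns survives, forcing $I=pr_{i^{*}}^{-1}(J)$ and closing the induction. The main obstacle is precisely this last step: with more than two coordinates the clean bipartite dichotomy available in Lemma \ref{yl-3.4} must be replaced by an argument that simultaneously rules out all mixtures among the $q+1$ patterns, and this is the substantive content of the cited theorem.
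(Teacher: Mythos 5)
First, a point of fact: the paper does not prove Lemma \ref{yl-3.6} at all; it is quoted from \cite{Ku2} and used as a black box (the only structural result the paper proves itself is Lemma \ref{yl-3.4}, the $q=2$ case for the specific graph $A\T_n$). So there is no proof in the paper to measure your attempt against, and it has to stand on its own --- which it does not, because it stops exactly where the theorem starts. Your preliminaries are fine: $\alpha(\T^{q+1})=\alpha n^{q}$ does follow from the diagonal homomorphism and Lemma \ref{yl-3.2}, and the equality clause of that lemma does show that the pullbacks of a maximum independent set $I$ along the partial-diagonal maps $\psi_i\colon\T^2\to\T^{q+1}$ and the duplication maps $\T^q\to\T^{q+1}$ are maximum independent sets of $\T^2$ and $\T^q$, hence standard by hypothesis and induction. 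But every one of these maps has its image inside the set of tuples with a repeated coordinate, so they say nothing about $I$ on tuples whose entries are pairwise distinct. You acknowledge this and defer the extension to an unspecified analogue of Claims 1--4 of Lemma \ref{yl-3.4}; since you yourself call that step ``the substantive content of the cited theorem,'' what you have is a reduction of the lemma to its hardest part, not a proof. Note also that the analogy with Lemma \ref{yl-3.4} is not mechanical: Claim 1 there is proved by modifying permutations with $3$-cycles, a surgery specific to derangement graphs, and no such local modification is available for an abstract vertex-transitive $\T$.

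Second, your intermediate claim --- that the pullbacks ``pin down a distinguished coordinate $i^{*}$ and a maximum independent set $J$ such that $I$ agrees with $pr_{i^{*}}^{-1}(J)$ on all degenerate configurations'' --- is not merely unproved; it does not follow from the information you have. Take $q+1=3$ and consider the candidate degenerate part $P=\{(x,x,y):x\in J\}\cup\{(x,y,x):x\in J\}\cup\{(x,y,y):y\in J\}$, in which each partial diagonal is controlled by its \emph{duplicated} coordinate. Every pullback you invoke is then standard (for example, the pullback along $(x,y)\mapsto(x,x,y)$ is $\{(x,y):x\in J\}$), all with the same $J$; moreover $P$ is an independent set, since adjacency between any two of its members would force an edge of $\T$ with both endpoints in the independent set $J$. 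Yet $P$ agrees with no single $pr_k^{-1}(J)$ on the degenerate tuples: for instance $pr_1^{-1}(J)$ meets $D_{23}=\{(x,y,y)\}$ in $\{(x,y,y):x\in J\}$, not in $\{(x,y,y):y\in J\}$. So ruling out such mixed control patterns already requires the global counting or expansion argument you postponed; no distinguished coordinate $i^{*}$ can be ``read off'' from the pullbacks alone, and closing this hole is precisely what a genuine proof of the lemma must do.
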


\begin{proof}  (of Theorem~\ref{dl-1.2}).  For $n=5,6,$ it is easy to see that $A\T_n$
is connected, non-bipartite and $e(n)$-regular graph with $e(5)=24$
and $e(6)=130.$ Moreover, a {\bf Matlab} computation shows that the
least eigenvalue of the adjacency matrix of $A\T_5$ and $A\T_6$ are
$-6$ and $-26,$ respectively. Thus the assertion holds by Lemmas
\ref{yl-3.1} and \ref{yl-3.5}.

For $n\geq 7,$ combining Proposition \ref{pr-1.1}, Lemma
\ref{yl-3.4} and \ref{yl-3.6}, the assertion holds.\end{proof}

\begin{corollary} \label{co-3.7}
Let $\omega(A\T_n^q)$ and $\chi(A\T_n^q)$ denote the clique number
and chromatic number of $A\T_n^q~(n\geq 5).$ Then we have
$$\omega(A\T_n^q)=\chi(A\T_n^q)=n.$$
\end{corollary}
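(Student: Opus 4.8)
The plan is to pin down $\chi(A\T_n^q)$ exactly first, and then squeeze $\omega(A\T_n^q)$ between the trivial inequality $\omega\le\chi$ and an explicitly constructed clique of size $n$.

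For the chromatic number I would give a two-sided estimate. For the lower bound, note that in any graph each colour class is independent, so $|A_n^q|\le\chi(A\T_n^q)\cdot\alpha(A\T_n^q)$; since $|A_n^q|=(n!/2)^q$ and $\alpha(A\T_n^q)=(n-1)!\,n!^{q-1}/2^q$ by Lemma~\ref{yl-3.1}, this yields $\chi(A\T_n^q)\ge|A_n^q|/\alpha(A\T_n^q)=n$. For the matching upper bound I would exhibit a proper $n$-colouring directly: colour the vertex $(g_1,\dots,g_q)$ by the value $1^{g_1}\in X$. There are exactly $n$ colours, and the class receiving value $j$ is precisely $B_{1,j}^{(1)}=\{(g_1,\dots,g_q)\in A_n^q:1^{g_1}=j\}$, which by Theorem~\ref{dl-1.2} is a (maximum) independent set of $A\T_n^q$. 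Hence $\chi(A\T_n^q)\le n$, so $\chi(A\T_n^q)=n$, and in particular $\omega(A\T_n^q)\le\chi(A\T_n^q)=n$.

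It remains to produce a clique of size $n$, which will force $\omega(A\T_n^q)=n$. First I would reduce to $q=1$: if $C\subseteq A_n$ is a clique of $A\T_n$, its diagonal image $\{(g,g,\dots,g):g\in C\}$ is a clique of $A\T_n^q$ of the same size, since two distinct diagonal vertices are adjacent in every coordinate exactly when the corresponding elements of $C$ are adjacent in $A\T_n$. So it suffices to find a clique of size $n$ in $A\T_n$, i.e.\ a set of $n$ even permutations whose pairwise ratios lie in ${\mathcal E}_n$; as the ratios are fixed-point-free and $|C|=n$, such a $C$ is forced to be sharply transitive, so I am asking for a sharply transitive set of even permutations of $X$. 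I would then split on $n\bmod 4$. If $n$ is odd, the cyclic group $\langle(1\,2\,\cdots\,n)\rangle$ works, as an $n$-cycle is even for odd $n$ and every non-identity power is a fixed-point-free even permutation. If $n\equiv0\pmod4$, the left regular representation of $G=Z_2\times Z_{n/2}$ embeds $G$ as a regular subgroup of $A_n$: every element of $G$ has order dividing $n/2$, so the highest power of $2$ dividing its order is strictly smaller than that dividing $n$, which forces each non-identity element to act as an even (and fixed-point-free) permutation.

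The case $n\equiv2\pmod4$ is the crux, and this is where I expect the real work to lie. Here every group of order $n$ has a cyclic Sylow $2$-subgroup, so its involution acts as a product of $n/2$ transpositions with $n/2$ odd, hence oddly, and \emph{no} regular subgroup of $A_n$ exists; one must build a non-subgroup sharply transitive set. My plan is a block construction: write $n=2m$ with $m$ odd, split $X$ into two blocks of size $m$, take the $m$ block-preserving even permutations generated by a pair of $m$-cycles (one on each block, even since $m$ is odd), and adjoin $m$ block-swapping permutations chosen so that the full $n\times n$ array of images is a Latin square, which guarantees sharp transitivity. The genuine obstacle is the parity bookkeeping: a block-swap has only even-length cycles, so its sign is $(-1)^{c}$ with $c$ the number of cycles of its return map on one block, and one must arrange all $m$ block-swaps to be even \emph{simultaneously} without breaking the Latin-square condition. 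I would first verify this explicitly for $n=6$ — for instance $\{1,(1\,2\,3)(4\,5\,6),(1\,3\,2)(4\,6\,5),(1\,4\,2\,5)(3\,6),(1\,5\,3\,4)(2\,6),(1\,6)(2\,4\,3\,5)\}$ is a sharply transitive set of even permutations — and then argue that the same parity tuning can be carried out for every odd $m$. Once a clique of size $n$ in $A\T_n$ is in hand, the diagonal embedding gives $\omega(A\T_n^q)\ge n$, and with $\chi(A\T_n^q)=n$ we conclude $\omega(A\T_n^q)=\chi(A\T_n^q)=n$.
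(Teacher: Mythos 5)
Your overall architecture is sound, and the chromatic-number half of your argument is complete and in fact more self-contained than the paper's: you get $\chi(A\T_n^q)\ge n$ from the counting bound $|A_n^q|\le\chi(A\T_n^q)\cdot\alpha(A\T_n^q)$ together with Lemma~\ref{yl-3.1}, and $\chi(A\T_n^q)\le n$ from the explicit proper colouring $(g_1,\dots,g_q)\mapsto 1^{g_1}$, whereas the paper invokes the clique--coclique inequality (Corollary 4 in \cite{Cameron}) and Godsil's Corollary 6.1.3 on Cayley graphs with conjugation-closed connection set. The diagonal reduction of the clique question to $q=1$ is the same in both. The genuine gap is in your lower bound $\omega(A\T_n)\ge n$: you need a sharply transitive set of $n$ even permutations, and while your constructions for $n$ odd and $n\equiv 0\pmod 4$ are correct, for $n\equiv 2\pmod 4$ you prove nothing beyond $n=6$ --- you exhibit one sharply transitive subset of $A_6$ and then say you would ``argue that the same parity tuning can be carried out for every odd $m$.'' That deferred step is exactly the hard part, and it does not follow from anything you wrote. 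The obstruction is real: in your block construction, if the two halves of a block swap are both translations of $Z_m$ (the natural first attempt), then the return map is a translation of $Z_m$, which has $\gcd(t,m)$ cycles --- always an odd number since $m$ is odd --- so \emph{every} such block swap is an odd permutation, no matter how the translation parameters are assigned. So some genuine twist is required, and your write-up leaves the cases $n\in\{10,14,18,\dots\}$ open. The paper has no such case analysis at all: it simply quotes $\omega(A\T_n)=n$ from \cite{Ku}, the same reference that supplies Proposition~\ref{pr-1.1}, and then gets $\omega(A\T_n^q)\le n$ from the clique--coclique bound rather than from a colouring.

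The gap is fillable along the lines you sketch, but the twist must be made explicit. Identify both blocks with $Z_m$ ($m$ odd) and fix a single odd permutation $\pi$ of $Z_m$; let the $j$-th block swap act by $x\mapsto x+j$ from the first block to the second and by $y\mapsto \pi(y-j)$ from the second block back to the first, for $j\in Z_m$, and keep your $m$ block-preserving elements translating both blocks by the same amount. Every cross-block pair is covered exactly once (given a source and a target, $j$ is determined), so the Latin-square condition holds; and every return map equals $\pi$, so each swap has $c(\pi)$ cycles, all of even length, giving sign $(-1)^{c(\pi)}$. Since $\pi$ is odd and $m$ is odd, $c(\pi)\equiv m-1\equiv 0\pmod 2$, so every swap is even. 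With that paragraph added (or with a citation of \cite{Ku} in place of the entire construction, as the paper does), your proof closes, and its chromatic-number part remains the more elementary of the two.
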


\begin{proof}  By \cite {Ku}, we have $\omega(A\T_n)=n.$ Let
$\{\sigma_1,\sigma_2,\cdots,\sigma_n\}$ be a clique of $A\T_n.$ Then
clearly
$\{(\sigma_1,\sigma_1,\cdots,\sigma_1),(\sigma_2,\sigma_2,\cdots,\sigma_2),\cdots,(\sigma_n,\sigma_n,\cdots,\sigma_n)\}$
is a clique of $A\T_n^q.$ So we have $\omega(A\T_n^q)\geq n.$ On the
other hand, by Theorem \ref{dl-1.2}, we know that the independence
number $\alpha(A\T_n^q)=\frac{(n-1)!n!^{q-1}}{2^q}.$ By Corollary 4
in \cite {Cameron}, we have $\omega(A\T_n^q)\alpha(A\T_n^q)\leq
|V(A\T_n^q)|,$ that is
$\omega(A\T_n^q)\cdot\frac{(n-1)!n!^{q-1}}{2^q}\leq
\frac{n!^q}{2^q},$ so $\omega(A\T_n^q)\leq n.$ Thus
$\omega(A\T_n^q)=n.$

In addition, by Corollary 6.1.3 in \cite {Godsil}, for any Cayley
graph $\T:=\T(G,S),$ if $S$ is closed under conjugation and
$\alpha(\T)\omega(\T)=|V(\T)|,$ then $\chi(\T)=\omega(\T).$ Note
that for $A\T_n^q=\T(A_n^q,{\mathcal E}_n^q),\,{\mathcal E}_n^q$ is
closed under conjugation and
$\alpha(A\T_n^q)\omega(A\T_n^q)=|V(A\T_n^q)|.$ Hence
$\chi(A\T_n^q)=\omega(A\T_n^q)=n.$
\end{proof}

\section{The automorphism group of $A\T_n^q$}

In this section, we completely determine the full automorphism group
of $A\T_n^q~(n\geq 5).$ First we introduce some definitions. Let
$\Sym(\Omega)$ denote the set of all permutations of a set $\Omega.$
A {\it permutation representation} of a group \G is a homomorphism
from \G into $\Sym(\Omega)$ for some set $\Omega.$ A permutation
representation is also referred to as an action of \G on the set
$\Omega,$ in which case we say that \G acts on $\Omega.$
Furthermore, if $\{g\in G:x^g=x,\,\forall x\in \Omega\}=1,$ we say
the action of $G$ on $\Omega$ is {\it faithful}, or $G$ acts {\it
faithfully} on $\Omega.$

Next we need the following known results:

\begin{proposition} ${{\fs\cite{Johnson}}}$ \label{pr-4.1}
Let $G^q=G\times G\times\cdots\times G$ be the external direct
product of $q$ copies of the nontrivial group $G.$ If $G$ has the
following properties:

(i) the center $Z(G)$ of $G$ is trivial;

(ii) G cannot be decomposed as a nontrivial direct product.\\
Then $\Aut(G^q)=\Aut(G)\wr S_q.$
\end{proposition}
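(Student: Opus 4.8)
The plan is to prove both inclusions of $\Aut(G^q)=\Aut(G)\wr S_q$, where I write $G_i=1\times\cdots\times G\times\cdots\times 1$ for the $i$-th coordinate copy of $G$, so that $G^q=G_1\times\cdots\times G_q$ is an internal direct product with each $G_i\cong G$. For the easy inclusion, the base group $\Aut(G)^q$ acts coordinatewise while $S_q$ permutes the coordinates; each such map is readily checked to be an automorphism of $G^q$, and the two actions combine to an embedding $\Aut(G)\wr S_q\hookrightarrow\Aut(G^q)$. The substance of the theorem is the reverse inclusion, for which I would show that every $\alpha\in\Aut(G^q)$ permutes the collection $\{G_1,\dots,G_q\}$ and restricts to an automorphism of $G$ on each factor.

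To do this, first I would characterize the $G_i$ internally. Each $G_i$ is a directly indecomposable direct factor of $G^q$ by hypothesis (ii), with complement $\prod_{j\neq i}G_j$. The key tool in the centerless setting is the following lemma: if $H$ has trivial center and $H=N\times M$, then $M=C_H(N)$. Indeed $M\subseteq C_H(N)$ is clear, and any $x\in C_H(N)$ written as $x=nm$ forces $n\in Z(N)\subseteq Z(H)=1$ (using hypothesis (i) together with $Z(H)=Z(N)\times Z(M)$), so $x\in M$; thus in a centerless group the complement of a direct factor is unique. Next, using hypothesis (i) to guarantee $Z(G^q)=Z(G)^q=1$ and hypothesis (ii) to guarantee $G$ is indecomposable, I would invoke the Krull--Remak--Schmidt theorem applied to the two decompositions $G^q=G_1\times\cdots\times G_q$ and $G^q=\alpha(G_1)\times\cdots\times\alpha(G_q)$ into indecomposables: these agree up to reindexing and a central automorphism, and since $Z(G^q)=1$ the only central automorphism is the identity. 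Hence $\{\alpha(G_1),\dots,\alpha(G_q)\}=\{G_1,\dots,G_q\}$ as sets of subgroups, yielding a permutation $\pi\in S_q$ with $\alpha(G_i)=G_{\pi(i)}$.

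With $\pi$ in hand, I would compose $\alpha$ with the image of $\pi^{-1}$ under the embedding of the first paragraph to reduce to the case $\alpha(G_i)=G_i$ for all $i$. Then $\alpha$ restricts to some $\phi_i\in\Aut(G_i)=\Aut(G)$ on each factor, and since the $G_i$ generate $G^q$, pairwise commute, and meet trivially, $\alpha$ must be the coordinatewise automorphism $(\phi_1,\dots,\phi_q)\in\Aut(G)^q$. Therefore the original $\alpha$ lies in $\Aut(G)\wr S_q$, establishing the reverse inclusion; a final check that the base group $\Aut(G)^q$ meets the $S_q$ part trivially confirms the product is exactly the (semidirect) wreath product.

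The step I expect to be the main obstacle is the uniqueness of the indecomposable decomposition as literal subgroups, i.e.\ passing from ``unique up to isomorphism and a central automorphism'' (Krull--Remak--Schmidt) to ``$\{\alpha(G_i)\}=\{G_i\}$''. This is exactly where both hypotheses are indispensable: triviality of the center kills the central automorphisms so that the factors are pinned down as subgroups rather than merely up to isomorphism, while indecomposability of $G$ ensures the $G_i$ really are the atoms of the decomposition. (Both are genuinely needed: for $G=\Z$ one has $\Aut(\Z^2)=\GL_2(\Z)\neq\Aut(\Z)\wr S_2$, the center being nontrivial, and for a decomposable $G$ the factors split further, enlarging the group.) For infinite $G$ one must additionally assume the chain conditions required by Krull--Remak--Schmidt, but in the intended application $G=A_n$ is finite, so this causes no difficulty.
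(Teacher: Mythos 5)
There is nothing in the paper to compare your proof against: Proposition 4.1 is quoted verbatim from Johnson's paper (the reference \cite{Johnson}) and is used as a black box in Lemma 4.3 with $G=A_n$; the authors give no proof of it. So your argument can only be judged on its own merits, and on those it is correct for the case the paper actually needs. The easy inclusion $\Aut(G)\wr S_q\hookrightarrow\Aut(G^q)$ is routine, and your two main ingredients fit together properly: hypothesis (i) gives $Z(G^q)=Z(G)^q=1$, hypothesis (ii) makes $G^q=G_1\times\cdots\times G_q$ and $G^q=\alpha(G_1)\times\cdots\times\alpha(G_q)$ two decompositions into indecomposables, and Krull--Remak--Schmidt then matches the factors up to a central automorphism, which is the identity since the center is trivial; hence $\{\alpha(G_1),\dots,\alpha(G_q)\}=\{G_1,\dots,G_q\}$ as subgroups, and the reduction to a coordinatewise automorphism (the $G_i$ generate $G^q$ and pairwise commute) is complete. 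Your unique-complement lemma ($Z(H)=1$ and $H=N\times M$ imply $M=C_H(N)$) is proved correctly, and it is worth noting that it meshes with the exchange form of KRS to give the subgroup equality even more directly: from $G^q=\alpha(G_1)\times G_2\times\cdots\times G_q=G_1\times G_2\times\cdots\times G_q$ both $\alpha(G_1)$ and $G_1$ equal $C_{G^q}(G_2\times\cdots\times G_q)$. The one genuine limitation is the one you flag yourself: KRS requires chain conditions, so as written your proof establishes the proposition for finite $G$ (or groups with both chain conditions on normal subgroups), whereas Johnson's statement carries no such hypothesis; since the paper applies the proposition only to $G=A_n$, $n\geq 5$, this restriction is harmless here, but if you wanted the full generality of the cited result you would have to replace KRS by an argument specific to centerless groups (uniqueness of the indecomposable decomposition when the center is trivial) or consult Johnson's own proof.
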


\begin{proposition} ${{\fs\cite{Godsil3}}}$ \label{pr-4.2}
Let $N_{Aut(\T(G,S)}(R(G))$ be the normalizer of $R(G)$ in
$\Aut(\T(G,S)).$ Then
$$N_{Aut(\T(G,S)}(R(G))=R(G)\rtimes \Aut(G,S)\leq \Aut(\T(G,S)),$$
where $\Aut(G,S)=\{\phi\in \Aut(G):\,S^{\phi}=S\}.$
\end{proposition}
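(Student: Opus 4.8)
The plan is to establish the two inclusions separately and then check that the product is internal and semidirect. Throughout I write the action of permutations on the right, so that $g^{R(h)}=gh$ defines the right regular representation and $R$ is a genuine homomorphism $G\rightarrow\Sym(G)$, while group automorphisms act by $g\mapsto g^{\phi}$. Recall from the introduction that $R(G)\le\Aut(\T(G,S))$, and note that each $\phi\in\Aut(G,S)$ is also a graph automorphism: an edge $(g,sg)$ is sent to $(g^{\phi},s^{\phi}g^{\phi})$, which is again an edge because $s^{\phi}\in S^{\phi}=S$.

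For the inclusion $R(G)\rtimes\Aut(G,S)\le N_{\Aut(\T(G,S))}(R(G))$, the key computation is that $\Aut(G,S)$ normalizes $R(G)$: for $\phi\in\Aut(G,S)$ and $h\in G$ one evaluates on an arbitrary vertex to get $g^{\phi^{-1}R(h)\phi}=(g^{\phi^{-1}}h)^{\phi}=g\,h^{\phi}=g^{R(h^{\phi})}$, so $\phi^{-1}R(h)\phi=R(h^{\phi})\in R(G)$. Since $R(G)$ is then normal in the product $R(G)\cdot\Aut(G,S)$, and since every $\phi\in\Aut(G,S)$ fixes the identity vertex $1$ while the only element of $R(G)$ fixing $1$ is $R(1)=\mathrm{id}$, the intersection $R(G)\cap\Aut(G,S)$ is trivial. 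Hence $R(G)\cdot\Aut(G,S)=R(G)\rtimes\Aut(G,S)$, and this subgroup lies in the normalizer.

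The substance is the reverse inclusion. Given $\sigma\in N_{\Aut(\T(G,S))}(R(G))$, put $h=1^{\sigma}$ and $\tau=\sigma R(h)^{-1}$; then $\tau$ again normalizes $R(G)$ and fixes $1$, so it suffices to show $\tau\in\Aut(G,S)$. Because $\tau$ normalizes $R(G)$, for each $x\in G$ there is a unique $\psi(x)\in G$ with $\tau^{-1}R(x)\tau=R(\psi(x))$; evaluating both sides at the fixed vertex $1$ gives $\psi(x)=1^{R(\psi(x))}=x^{\tau}$, so in fact $\tau^{-1}R(x)\tau=R(x^{\tau})$. Reading this identity at an arbitrary vertex and substituting $g=(g')^{\tau}$ yields $(g'x)^{\tau}=(g')^{\tau}x^{\tau}$ for all $g',x\in G$, i.e. $\tau$ is a bijective homomorphism and hence lies in $\Aut(G)$. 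Finally $\tau$ fixes $1$ and so permutes the neighbourhood of $1$, which (using $S=S^{-1}$) is exactly $S$; thus $S^{\tau}=S$ and $\tau\in\Aut(G,S)$. Writing $\sigma=\tau R(h)$ then places $\sigma$ in $R(G)\rtimes\Aut(G,S)$, which completes both inclusions.

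The only delicate point is the identification $\psi=\tau$ on $G$: once one observes that conjugation by a $1$-fixing normalizing element carries $R(x)$ to $R(x^{\tau})$, the homomorphism property of $\tau$ drops out of a single evaluation, and preservation of $S$ is immediate from the fact that a graph automorphism fixing $1$ must permute its neighbourhood. I therefore expect no genuine obstacle beyond bookkeeping; the main thing to get right is that all three computations—normalization, the formula $\tau^{-1}R(x)\tau=R(x^{\tau})$, and the homomorphism identity—are carried out consistently with the chosen side of the action.
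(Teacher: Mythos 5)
The paper offers no proof of Proposition \ref{pr-4.2} at all --- it is quoted directly from Godsil \cite{Godsil3} --- so there is no internal argument to compare against; your proof is correct and is essentially the classical argument from that source. All the steps check out: right translations are graph automorphisms precisely because edges are defined via left multiplication by elements of $S$; reducing a normalizing $\sigma$ to a $1$-fixing $\tau=\sigma R(1^{\sigma})^{-1}$ and extracting the conjugation formula $\tau^{-1}R(x)\tau=R(x^{\tau})$ by evaluating at the fixed vertex gives the homomorphism property of $\tau$; and the identification of the neighbourhood of $1$ with $S$ (which does require $S=S^{-1}$, as you note) yields $S^{\tau}=S$, so the decomposition $\sigma=\tau R(h)$ together with the trivial intersection $R(G)\cap\Aut(G,S)=1$ establishes the semidirect product structure.
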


\begin{lemma} \label{yl-4.3}
Define the mapping $\varphi_k:\,A_n^q\rightarrow A_n^q$ as
$(\sigma_1,\cdots,\sigma_{k-1},\sigma_k,\sigma_{k+1},\cdots,\sigma_q)^{\varphi_k}=(\sigma_1,\cdots,\sigma_{k-1},\sigma_k^{-1},
\sigma_{k+1},\cdots,\sigma_q)$ for $k=1,2,\cdots,q.$ For $n\geq 5,$
$$(R(A_n^q)\rtimes
(\Inn(S_n)\wr S_q))\rtimes Z_2^q\leq \Aut(A\T_n^q), $$ where
$\Inn(S_n)~\cong S_n$ and
$Z_2^q=\langle\varphi_1\rangle\times\langle\varphi_2\rangle\times\cdots\langle\varphi_q\rangle.$
In particular, $|\Aut(A\T_n^q)|\geq |(R(A_n^q)\rtimes (\Inn(S_n)\wr
S_q))\rtimes Z_2^q|=q!n!^{2q}.$
\end{lemma}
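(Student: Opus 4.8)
The plan is to exhibit the four families of maps generating the group, verify that each lies in $\Aut(A\T_n^q)=\Aut(\Cay(A_n^q,{\mathcal E}_n^q))$, and then compute the order of the subgroup $H$ they generate. First, $R(A_n^q)$ lies in $\Aut(A\T_n^q)$ automatically, being the right regular representation of a Cayley graph. For the wreath factor, realise $\Inn(S_n)^q$ as the coordinatewise inner automorphisms and $S_q$ as the coordinate permutations of $A_n^q$: each is a group automorphism of $A_n^q$, and each preserves ${\mathcal E}_n^q$ — the former because $\Aut(A_n,{\mathcal E}_n)=\Inn(S_n)$ (Proposition \ref{pr-2.1} together with the computation in Lemma \ref{yl-2.2}), the latter because ${\mathcal E}_n^q$ is symmetric in its coordinates. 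Hence $\Inn(S_n)\wr S_q\leq\Aut(A_n^q,{\mathcal E}_n^q)$, so by Proposition \ref{pr-4.2} it sits inside $N_{\Aut(A\T_n^q)}(R(A_n^q))=R(A_n^q)\rtimes\Aut(A_n^q,{\mathcal E}_n^q)$, giving $R(A_n^q)\rtimes(\Inn(S_n)\wr S_q)\leq\Aut(A\T_n^q)$. Finally, each $\varphi_k$ is a graph automorphism: an edge joins $(g_i)$ to $(h_i)$ exactly when $h_ig_i^{-1}\in{\mathcal E}_n$ for all $i$, and inverting the $k$-th coordinate replaces the $k$-th difference $h_kg_k^{-1}$ by $h_k^{-1}g_k$, a conjugate of its inverse; since ${\mathcal E}_n$ is a union of conjugacy classes closed under inversion, membership — hence adjacency — is preserved.

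Next I would record the conjugation relations binding these families into one group. The key computation is $\varphi_k\,R(a)\,\varphi_k=C_k\,R(b)$, where $b$ is $a$ with its $k$-th entry inverted and $C_k$ is conjugation by $a_k^{-1}$ on the $k$-th coordinate (an element of $\Inn(S_n)^q$); this follows from the identity $L(c)=\Inn(c)\circ R(c)$ applied in the $k$-th coordinate. In particular $\varphi_k$ does \emph{not} normalise $R(A_n^q)$, but carries it into $R(A_n^q)\rtimes\Inn(S_n)^q$. Two further relations are routine: $\varphi_k$ commutes with every coordinatewise inner automorphism (an automorphism commutes with inversion), and $\pi\,\varphi_k\,\pi^{-1}=\varphi_{\pi(k)}$ for $\pi\in S_q$, so $S_q$ permutes the involutions $\varphi_1,\dots,\varphi_q$ exactly as it permutes the inner-automorphism factors.

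With these relations in hand I would compute $|H|$ via vertex-transitivity. Writing $K:=\langle\Inn(S_n)^q,S_q,Z_2^q\rangle$, the relations above give $K\,R(A_n^q)\subseteq R(A_n^q)\,K$, so $H=R(A_n^q)\,K$; since $R(A_n^q)$ is regular it meets the stabiliser of the identity vertex $e=(1,\dots,1)$ trivially, whence $H_e=K$ and $|H|=|A_n^q|\cdot|K|$. The relations also identify $K$: the $\varphi_k$ commute with the inner automorphisms and are permuted together with them by $S_q$, so $K\cong(\Inn(S_n)\times\langle\varphi\rangle)\wr S_q\cong(S_n\times Z_2)\wr S_q$, of order $(2\cdot n!)^q\,q!$. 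Therefore
$$|H|=\left(\frac{n!}{2}\right)^{q}\cdot(2\cdot n!)^{q}\,q!=q!\,n!^{\,2q},$$
as claimed; and because $Z_2^q$ meets $R(A_n^q)\rtimes(\Inn(S_n)\wr S_q)$ trivially (no nontrivial product of the $\varphi_k$ normalises $R(A_n^q)$, by the displayed formula) while their product already has the full order, one identifies $H$ with the group $(R(A_n^q)\rtimes(\Inn(S_n)\wr S_q))\rtimes Z_2^q$ of the statement, with $Z_2^q$ complementing the normaliser part. Since $H\leq\Aut(A\T_n^q)$, the final inequality $|\Aut(A\T_n^q)|\geq q!\,n!^{2q}$ follows.

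The main obstacle is the interaction of the inversions $\varphi_k$ with the rest of the group. Because $\varphi_k$ fails to normalise $R(A_n^q)$, one cannot simply multiply subgroup orders, and the identity $\varphi_kR(a)\varphi_k=C_kR(b)$ is precisely what both controls this failure and drives the order count; establishing the factorisation $H=R(A_n^q)\,K$ (rather than a naive semidirect product with $R(A_n^q)$ normal) and verifying $Z_2^q\cap\big(R(A_n^q)\rtimes(\Inn(S_n)\wr S_q)\big)=1$ are the delicate points. Everything else reduces to the standard fact $\Aut(A_n,{\mathcal E}_n)=\Inn(S_n)$ and Proposition \ref{pr-4.2}.
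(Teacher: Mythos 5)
Your proposal is correct, and its first half is essentially the paper's own argument: the paper likewise obtains $R(A_n^q)\rtimes(\Inn(S_n)\wr S_q)\leq \Aut(A\T_n^q)$ from Propositions \ref{pr-2.1} and \ref{pr-4.2}, and likewise checks directly that each $\varphi_k$ preserves adjacency (via the pointwise equivalence $i^{\sigma_k}\neq i^{\tau_k}\iff i^{\sigma_k^{-1}}\neq i^{\tau_k^{-1}}$, which is the same fact as your conjugacy-class-closure argument). There are two genuine differences. First, the paper invokes Proposition \ref{pr-4.1} (Johnson's theorem) to get the \emph{equality} $\Aut(A_n^q,{\mathcal E}_n^q)=\Inn(S_n)\wr S_q$, whereas you only prove the containment $\Inn(S_n)\wr S_q\leq\Aut(A_n^q,{\mathcal E}_n^q)$ -- which is all this lemma needs, so Johnson's theorem can be dispensed with here. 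Second, and more substantially, the paper's treatment of the group structure and the order is a bare assertion: after observing $\varphi_k\notin R(A_n^q)$ and $\varphi_k\notin\Inn(S_n)\wr S_q$, it simply writes down the product and the value $q!\,n!^{2q}$. Your proof supplies exactly what is missing: the relations $\varphi_kR(a)\varphi_k=C_kR(b)$, the commutation of $\varphi_k$ with $\Inn(S_n)^q$, and $\pi\varphi_k\pi^{-1}=\varphi_{k^{\pi}}$; the resulting factorization $H=R(A_n^q)K$ with $K=\langle\Inn(S_n)^q,S_q,Z_2^q\rangle\cong(S_n\times Z_2)\wr S_q$ identified as the stabilizer of the identity vertex; and the count $|H|=|A_n^q|\cdot|K|=q!\,n!^{2q}$. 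Since, as you note, $\varphi_k$ does not normalize $R(A_n^q)$, one really cannot just multiply orders, and your write-up is the more rigorous of the two.

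One caveat, which you inherit from the statement rather than introduce: for $q\geq 2$ the expression $(R(A_n^q)\rtimes(\Inn(S_n)\wr S_q))\rtimes Z_2^q$ is not literally an iterated semidirect product, because $Z_2^q$ fails to normalize the left-hand factor. Indeed, for $\pi\in S_q$ one has $\varphi_k\pi\varphi_k=\pi\varphi_{k^{\pi}}\varphi_k$, which lies outside $R(A_n^q)\rtimes(\Inn(S_n)\wr S_q)$ whenever $\pi$ moves $k$. What is true -- and what your argument actually proves -- is the exact factorization: the product of the two subgroups, which intersect trivially, is the group $H$ of order $q!\,n!^{2q}$; a correct iterated decomposition would be $(R(A_n^q)\rtimes(\Inn(S_n)^q\times Z_2^q))\rtimes S_q$. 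The paper's own proof glosses over this point entirely, so your reading is the defensible one, and the conclusion used later in the paper, namely $|\Aut(A\T_n^q)|\geq q!\,n!^{2q}$, is fully established by your argument.
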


\begin{proof}
By Proposition~\ref {pr-2.1} and \ref {pr-4.1}, we have
\begin{eqnarray*}
\Aut(A_n^q,{\mathcal E}_n^q)&=&\{\phi\in \Aut(A_n^q):\,({\mathcal E}_n^q)^{\,\phi}={\mathcal E}_n^q\}\\
&=&\{\phi\in \Aut(A_n)\wr Sq:\,({\mathcal E}_n^q)^{\,\phi}={\mathcal E}_n^q\}\\
&=&\Inn(S_n)\wr Sq.
\end{eqnarray*}

Using Proposition \ref {pr-4.2}, we obtain $R(A_n^q)\rtimes
(\Inn(S_n)\wr S_q)\leq \Aut(A\T_n^q).$

Next we show that $\varphi_k$ is an automorphism of $A\T_n^q.$
\begin{eqnarray*}
&&\{(\sigma_1,\cdots,\sigma_k,\cdots,\sigma_q),(\tau_1,\cdots,\tau_k,\cdots,\tau_q)\}\in E(A\T_n^q)\\
&\Leftrightarrow&\forall~i\in\{1,2,\cdots,n\},\forall~k\in\{1,2,\cdots,q\},\,i^{\sigma_k}\neq i^{\tau_k}\\
&\Leftrightarrow&\forall~i\in\{1,2,\cdots,n\},\forall~k\in\{1,2,\cdots,q\},\,(i^{{\sigma_k}^{-1}})^{\sigma_k}\neq (i^{{\sigma_k}^{-1}})^{\tau_k}\\
&\Leftrightarrow&\forall~i\in\{1,2,\cdots,n\},\forall~k\in\{1,2,\cdots,q\},\,i\neq i^{{\sigma_k}^{-1}\tau_k}\\
&\Leftrightarrow&\forall~i\in\{1,2,\cdots,n\},\forall~k\in\{1,2,\cdots,q\},\,i^{{\tau_k}^{-1}}\neq i^{{\sigma_k}^{-1}}\\
&\Leftrightarrow&\{(\sigma_1,\cdots,\sigma_k^{-1},\cdots,\sigma_q),(\tau_1,\cdots,\tau_k^{-1},\cdots,\tau_q)\}\in E(A\T_n^q)\\
&\Leftrightarrow&\{(\sigma_1,\cdots,\sigma_k,\cdots,\sigma_q)^{\varphi_k},(\tau_1,\cdots,\tau_k,\cdots,\tau_q)^{\varphi_k}\}\in
E(A\T_n^q).
\end{eqnarray*}
 It is easy to see that $\varphi_k\not\in R(A_n^q)$ and
$\varphi_k\not\in \Inn(S_n)\wr S_q.$ Hence $$(R(A_n^q)\rtimes
(\Inn(S_n)\wr S_q))\rtimes Z_2^q\leq \Aut(A\T_n^q), $$ where
$Z_2^q=\langle\varphi_1\rangle\times\langle\varphi_2\rangle\times\cdots\langle\varphi_q\rangle.$
The assertion holds.
\end{proof}

\begin{lemma}\label{yl-4.4}
Let ${\mathcal
B}=\{B_{i,j}^{(k)},\,i,j=1,2,\cdots,n;\,k=1,2\cdots,q\}$, where
$B_{i,j}^{(k)}=\{(\sigma_1,\sigma_2,\cdots,\sigma_q)\in
A_n^q:\,i^{\sigma_k}=j\}.$ Then the action of $\Aut(A\T_n^q)$ on
${\mathcal B}$ can be induced by the natural action of
$\Aut(A\T_n^q)$ on $A_n^q$, and is faithful. Furthermore, any
$\phi\in \Aut(A\T_n^q)$ is a permutation of ${\mathcal B}.$
\end{lemma}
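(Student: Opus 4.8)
The plan is to derive all three assertions directly from the complete classification of maximum-size independent sets given by Theorem~\ref{dl-1.2}. The crucial structural fact I would exploit is that ${\mathcal B}$ is \emph{exactly} the set of all maximum-size independent sets of $A\T_n^q$; being defined purely in terms of the graph, this family is therefore invariant under the whole automorphism group, and everything else is bookkeeping on top of that observation.

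First I would show that every $\phi\in\Aut(A\T_n^q)$ carries ${\mathcal B}$ into itself. An automorphism is a bijection of the vertex set $A_n^q$ that preserves adjacency (and hence non-adjacency), so it sends any independent set to an independent set of the same cardinality. In particular, for each $B\in{\mathcal B}$ the image $B^{\phi}=\{x^{\phi}:x\in B\}$ is again a maximum-size independent set, whence $B^{\phi}\in{\mathcal B}$ by Theorem~\ref{dl-1.2}. Because $\phi$ is invertible, $B\mapsto B^{\phi}$ is a bijection of ${\mathcal B}$. This single step establishes simultaneously that the action on ${\mathcal B}$ is induced by the natural action of $\Aut(A\T_n^q)$ on $A_n^q$, and that each $\phi$ is a permutation of ${\mathcal B}$.

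The remaining, and only genuinely substantive, step is faithfulness, i.e. that the induced homomorphism $\Aut(A\T_n^q)\to\Sym({\mathcal B})$ has trivial kernel. Suppose $\phi$ fixes every element of ${\mathcal B}$ setwise, so $B^{\phi}=B$ for all $B\in{\mathcal B}$. The key point is that a vertex can be reconstructed from the subfamily of ${\mathcal B}$ passing through it: a vertex $x=(\sigma_1,\ldots,\sigma_q)$ lies in $B_{i,j}^{(k)}$ precisely when $i^{\sigma_k}=j$, so the family of members of ${\mathcal B}$ containing $x$ is $\{\,B_{i,\,i^{\sigma_k}}^{(k)}:1\le i\le n,\ 1\le k\le q\,\}$, and reading off the labels of these sets recovers the value $i^{\sigma_k}$ for every pair $(i,k)$, hence each permutation coordinate $\sigma_k$ and thus $x$ itself. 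Now for any vertex $x$ and any $B\in{\mathcal B}$ we have $x\in B\Leftrightarrow x^{\phi}\in B^{\phi}=B$, so $x$ and $x^{\phi}$ lie in exactly the same members of ${\mathcal B}$; by the reconstruction just described, $x^{\phi}=x$. As $x$ was arbitrary, $\phi$ is the identity.

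I expect the only place demanding care is the reconstruction claim, namely verifying precisely that the incidence data encoded by membership in the various $B_{i,j}^{(k)}$ pins down each coordinate $\sigma_k$ uniquely. This is elementary—it amounts to noting that knowing $i^{\sigma_k}$ for all $i$ determines the permutation $\sigma_k$—but it is the genuine crux of faithfulness; the invariance of ${\mathcal B}$ and the permutation statement follow formally from Theorem~\ref{dl-1.2} together with the definition of a graph automorphism.
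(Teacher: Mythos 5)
Your proposal is correct and follows essentially the same route as the paper: invariance of ${\mathcal B}$ comes from Theorem~\ref{dl-1.2} plus the fact that automorphisms preserve maximum-size independent sets, and faithfulness comes from recognizing that each vertex $(\sigma_1,\ldots,\sigma_q)$ is the intersection $\bigcap_{k=1}^q\bigcap_{i=1}^n B_{i,i^{\sigma_k}}^{(k)}$ of the members of ${\mathcal B}$ containing it. The only cosmetic difference is that you get the permutation property directly from invertibility of $\phi$ on subsets, whereas the paper verifies injectivity via a cardinality argument on unions; your version is simpler and equally valid.
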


\begin{proof}
Obviously, any $\phi\in\Aut(A\T_n^q)$ maps a maximum-size
independent set of $A\T_n^q$ to a maximum-size independent set of
$A\T_n^q.$ So by Theorem \ref{dl-1.2}, for any $B_{i,j}^{(k)}\in
{\mathcal B}$ and $\phi\in\Aut(\T_n^q),$ we have
${B_{i,j}^{(k)}}^{\,\phi}\in {\mathcal B}.$

Next we show that if $\phi\in\Aut(A\T_n^q)$ satisfies
${B_{i,j}^{(k)}}^{\,\phi}=B_{i,j}^{(k)}$ for each $B_{i,j}^{(k)}\in
{\mathcal B},$ then $\phi$ is the identity map. In fact, clearly,
$$\forall\,(\sigma_1,\sigma_2,\cdots,\sigma_q)\in A_n^q,\,\{(\sigma_1,\sigma_2,\cdots,\sigma_q)\}=\bigcap_{k=1}^q\bigcap_{i=1}^n
B_{i,i^{\,\sigma_k}}^{(k)}.$$
So
\begin{eqnarray*} \{(\sigma_1,\sigma_2,\cdots,\sigma_q)^{\,\phi}\}&=&(\bigcap_{k=1}^q\bigcap_{i=1}^n B_{i,i^{\,\sigma_k}}^{(k)})^{\,\phi}\\
&\subseteq& \bigcap_{k=1}^q\bigcap_{i=1}^n {B_{i,i^{\,\sigma_k}}^{(k)}}^{\phi}\\
&=&\bigcap_{k=1}^q\bigcap_{i=1}^nB_{i,i^{\,\sigma_k}}^{(k)}\\
&=&\{(\sigma_1,\sigma_2,\cdots,\sigma_q)\}.
\end{eqnarray*}
Thus $\phi$ is the identity map.

For any $B_{i,j}^{(k)},\,B_{i^{'},j^{'}}^{(k^{'})}\in {\mathcal B}$
and $\phi\in \Aut(A\T_n^q),$ we have
\begin{eqnarray*}
B_{i,j}^{(k)}\neq B_{i^{'},j^{'}}^{(k^{'})}&\Leftrightarrow&|B_{i,j}^{(k)}\cup B_{i^{'},j^{'}}^{(k^{'})}|>\frac{(n-1)!n!^{q-1}}{2^q}\\
&\Leftrightarrow&|(B_{i,j}^{(k)}\cup B_{i^{'},j^{'}}^{(k^{'})})^{\phi}|>\frac{(n-1)!n!^{q-1}}{2^q}\\
&\Leftrightarrow&|{B_{i,j}^{(k)}}^{\,\phi}\cup {B_{i^{'},j^{'}}^{(k^{'})}}^{\phi}|>\frac{(n-1)!n!^{q-1}}{2^q}\\
&\Leftrightarrow&{B_{i,j}^{(k)}}^{\,\phi}\neq
{B_{i^{'},j^{'}}^{(k^{'})}}^{\phi}.
\end{eqnarray*}
Thus $\phi$ is a permutation of ${\mathcal B}.$
\end{proof}

\begin{lemma}\label{yl-4.5}
$B_{i,j}^{(k)}\cap B_{i^{'},j^{'}}^{(k^{'})}=\emptyset$ if and only
if $k=k^{'}$ and exactly one of $i=i^{'}$ and $j=j^{'}$ holds.
\end{lemma}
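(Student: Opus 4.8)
The plan is to reduce everything to a statement about a single even permutation and then run a short case analysis on the four indices, since by definition a tuple $(\sigma_1,\ldots,\sigma_q)$ lies in $B_{i,j}^{(k)}\cap B_{i',j'}^{(k')}$ exactly when $i^{\sigma_k}=j$ and $i'^{\sigma_{k'}}=j'$. First I would dispose of the case $k\neq k'$. Here the two constraints fall on distinct coordinates $\sigma_k$ and $\sigma_{k'}$, which can be prescribed independently, and the remaining coordinates can be filled with any element of $A_n$. Since for $n\geq 5$ there is always an even permutation carrying a given point to a given point (the identity when the points coincide, and $(i\,j)(a\,b)$ with $a,b\in X\setminus\{i,j\}$ when they differ), the intersection is always nonempty when $k\neq k'$. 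This already shows that emptiness forces $k=k'$, giving one half of the claimed characterization.

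Next I would treat $k=k'$, where both constraints concern the same permutation $\sigma:=\sigma_k$ and the other coordinates remain free over the nonempty set $A_n$. Thus the intersection is empty if and only if there is \emph{no} $\sigma\in A_n$ with $i^{\sigma}=j$ and $i'^{\sigma}=j'$ simultaneously. Because any such $\sigma$ is a bijection, the system is solvable by \emph{some} permutation precisely when $i=i'$ is equivalent to $j=j'$: if $i=i'$ but $j\neq j'$ it forces the single value $i^{\sigma}$ to equal two distinct points, and if $i\neq i'$ but $j=j'$ it violates injectivity, whereas in the two remaining cases a solution exists. Hence, within $k=k'$, the intersection is empty exactly when precisely one of $i=i'$ and $j=j'$ holds, which matches the statement.

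It then remains to upgrade "solvable by some permutation" to "solvable by an \emph{even} permutation" in the surviving case $i\neq i'$, $j\neq j'$. For this I would construct a solution explicitly: set $i^{\sigma}=j$ and $i'^{\sigma}=j'$ and extend $\sigma$ to an arbitrary bijection of $X\setminus\{i,i'\}$ onto $X\setminus\{j,j'\}$. If the resulting $\sigma$ is odd, I replace it by $(a\,b)\sigma$ for two distinct points $a,b\in X\setminus\{i,i'\}$, which exist since $n\geq 5$; as $a,b\notin\{i,i'\}$ this leaves $i^{\sigma}$ and $i'^{\sigma}$ unchanged while flipping the parity, producing the required element of $A_n$. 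Combining the three cases yields that $B_{i,j}^{(k)}\cap B_{i',j'}^{(k')}=\emptyset$ if and only if $k=k'$ and exactly one of $i=i'$, $j=j'$ holds.

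The only genuinely nontrivial point is this last parity adjustment, namely arranging that the solving permutation can be chosen even without disturbing the two prescribed images; this is precisely where the hypothesis $n\geq 5$ (in fact $n\geq 4$ suffices for this lemma) enters. Everything else is routine bookkeeping on the injectivity of a bijection and on the independence of distinct coordinates.
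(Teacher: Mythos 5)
Your proof is correct, but it reaches the nonemptiness half of the statement by a genuinely different route than the paper. The paper runs the same four-way case split on $(k,k',i,i',j,j')$, and the empty cases are handled identically (a permutation cannot send one point to two places, nor two points to one place); however, for the nonempty cases the paper simply computes exact cardinalities: for $k\neq k'$ it asserts $|B_{i,j}^{(k)}\cap B_{i',j'}^{(k')}|=\frac{(n-1)!^2\,n!^{q-2}}{2^q}$, and for $k=k'$, $i\neq i'$, $j\neq j'$ it asserts the intersection has size $\frac{(n-2)!\,n!^{q-1}}{2^q}$ (the paper misprints this last value as $\frac{(n-1)!\,n!^{q-1}}{2^q}$, but the corrected figure is the one invoked later). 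Those counts silently rest on the fact that exactly half of the permutations in the relevant cosets are even, which is precisely the parity issue you confront head-on with the correction $(a\,b)\sigma$, $a,b\in X\setminus\{i,i'\}$; under the paper's right-action convention ($x^{\alpha\beta}=(x^{\alpha})^{\beta}$) your choice of $a,b$ is the right one, and your observation that $n\geq 4$ already suffices is accurate. So your constructive argument is more elementary and self-contained, whereas the paper's counting buys strictly more than the lemma states: the exact intersection sizes are reused in the proof of Lemma \ref{yl-4.6}, which explicitly cites ``Lemma \ref{yl-4.5} and its proof.'' Your version proves the lemma as stated but would need to be supplemented by those cardinality computations for the later argument to go through unchanged.
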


\begin{proof}
If $k=k^{'}$ and exactly one of $i=i^{'}$ and $j=j^{'}$ holds, then
$B_{i,j}^{(k)}\cap B_{i^{'},j^{'}}^{(k^{'})}=\emptyset.$

If $k\neq k^{'},$ then $|B_{i,j}^{(k)}\cap
B_{i^{'},j^{'}}^{(k^{'})}|=|\{(\sigma_1,\sigma_2,\cdots,\sigma_q)\in
A_n^q:\,i^{\sigma_k}=j,\,{i^{'}}^{\sigma_{k^{'}}}=j^{'}\}|=\frac{(n-1)!^2n!^{q-2}}{2^q}.$

If $k=k^{'},\,i= i^{'}$ and $j= j^{'},$ then $B_{i,j}^{(k)}=
B_{i^{'},j^{'}}^{(k^{'})},$ so $B_{i,j}^{(k)}\cap
B_{i^{'},j^{'}}^{(k^{'})}\neq\emptyset.$

If $k=k^{'},\,i\neq i^{'}$ and $j\neq j^{'},$ then
$$|B_{i,j}^{(k)}\cap
B_{i^{'},j^{'}}^{(k^{'})}|=\{(\sigma_1,\sigma_2,\cdots,\sigma_q)\in
A_n^q:\,i^{\sigma_k}=j,\,{i^{'}}^{\sigma_k}=j^{'}\}|=\frac{(n-1)!n!^{q-1}}{2^q}.$$

Thus the assertion holds.
\end{proof}

\begin{lemma}\label{yl-4.6}
Let ${\mathcal
B}^{(k)}=\{B_{i,j}^{(k)},\,i,j=1,2,\cdots,n\},\,k=1,2,\cdots,q.$ For
any $\phi\in\Aut(A\T_n^q),$ There exists a $B_{i,j}^{(k)}\in
{\mathcal B}^{(k)}$ such that ${B_{i,j}^{(k)}}^{\,\phi}\in {\mathcal
B}^{(k^{'})}$ if and only if ${B_{i,j}^{(k)}}^{\,\phi}\in {\mathcal
B}^{(k^{'})}$ for any $B_{i,j}^{(k)}\in {\mathcal B}^{(k)}.$
\end{lemma}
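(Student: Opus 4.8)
The plan is to recognise the blocks $\mathcal{B}^{(1)},\cdots,\mathcal{B}^{(q)}$ as the connected components of an intrinsic relation on the set $\mathcal{B}$ of all maximum-size independent sets, namely the relation of being disjoint. The backward implication of the lemma is trivial, so the whole content lies in the forward direction: if a single member of $\mathcal{B}^{(k)}$ is sent by $\phi$ into $\mathcal{B}^{(k')},$ then so is every member. I would begin by noting that $\phi,$ being a bijection of the vertex set $A_n^q,$ satisfies $A^{\phi}\cap B^{\phi}=(A\cap B)^{\phi}$ for all $A,B\in\mathcal{B};$ hence $A\cap B=\emptyset$ if and only if $A^{\phi}\cap B^{\phi}=\emptyset.$ By Lemma~\ref{yl-4.4}, $\phi$ permutes $\mathcal{B},$ so $\phi$ is in fact an automorphism of the graph on the vertex set $\mathcal{B}$ whose edges join the disjoint pairs.

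Next I would read off the structure of this disjointness graph from Lemma~\ref{yl-4.5}. Two consequences are decisive. First, members of different blocks always meet: if $k\neq k'$ then $B_{i,j}^{(k)}\cap B_{i',j'}^{(k')}\neq\emptyset,$ so there is no edge between $\mathcal{B}^{(k)}$ and $\mathcal{B}^{(k')}.$ Second, within a single block, if we regard $B_{i,j}^{(k)}$ as occupying the cell $(i,j)$ of an $n\times n$ array, then two distinct members are disjoint exactly when they share precisely one coordinate, that is, when they lie in a common row or a common column. Thus the disjointness graph restricted to $\mathcal{B}^{(k)}$ is the rook's graph of the $n\times n$ board, which is connected: any two cells are joined either directly (same row or same column) or, when they share neither coordinate, through a common cell in two steps. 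Combining the two facts, the connected components of the disjointness graph on $\mathcal{B}$ are exactly the blocks $\mathcal{B}^{(1)},\cdots,\mathcal{B}^{(q)}.$

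It then follows formally that $\phi,$ being an automorphism of the disjointness graph, permutes its connected components and therefore carries each block $\mathcal{B}^{(k)}$ onto a single block. In particular, if ${B_{i,j}^{(k)}}^{\,\phi}\in\mathcal{B}^{(k')}$ for one member, then the component $\mathcal{B}^{(k)}$ is mapped onto the component containing ${B_{i,j}^{(k)}}^{\,\phi},$ which is precisely $\mathcal{B}^{(k')};$ hence ${B_{i',j'}^{(k)}}^{\,\phi}\in\mathcal{B}^{(k')}$ for every $B_{i',j'}^{(k)}\in\mathcal{B}^{(k)},$ which is the forward implication. The one step demanding genuine attention is the middle one, namely translating the arithmetic of Lemma~\ref{yl-4.5} into the clean statement that the blocks are precisely the connected components --- verifying both the absence of cross-block disjointness and the within-block connectivity of the rook's graph. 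Once that identification is secured, the invariance of disjointness under $\phi$ makes the conclusion automatic.
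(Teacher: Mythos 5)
Your proof is correct, but it follows a genuinely different route from the paper's. The paper argues by contradiction using cardinality arithmetic: if two distinct members $B_{i,j}^{(k)},B_{i',j'}^{(k)}$ of the same block had images in different blocks $\mathcal{B}^{(k')},\mathcal{B}^{(k'')}$, then since $\phi$ is a bijection the union $|B_{i,j}^{(k)}\cup B_{i',j'}^{(k)}|$ (which equals $\frac{2(n-1)!n!^{q-1}}{2^q}$ or $\frac{2(n-1)!n!^{q-1}-(n-2)!n!^{q-1}}{2^q}$, using the exact intersection sizes computed in the \emph{proof} of Lemma~\ref{yl-4.5}) would have to equal the cross-block value $\frac{2(n-1)!n!^{q-1}-(n-1)!^2n!^{q-2}}{2^q}$, and a short computation shows these numbers never coincide. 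You instead use only the qualitative content of the \emph{statement} of Lemma~\ref{yl-4.5}: disjointness is an intrinsic relation preserved by $\phi$ (via Lemma~\ref{yl-4.4}), no two sets from different blocks are disjoint, and within a block the disjointness graph is the $n\times n$ rook's graph, which is connected; hence the blocks are exactly the connected components of the disjointness graph, and any automorphism of that graph maps components onto components. Your approach buys conceptual robustness --- it needs no counting beyond empty/nonempty (in particular it sidesteps the quantitative computations, where the paper's Lemma~\ref{yl-4.5} proof actually contains a typo, writing $(n-1)!$ where $(n-2)!$ is meant), and it exposes the structural reason the blocks are canonical. The paper's approach, given the intersection sizes already tabulated, finishes in a few lines of arithmetic but depends on verifying a numerical non-coincidence that your argument never needs.
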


\begin{proof}
Suppose on the contrary that there exist two distinct
$B_{i,j}^{(k)},\,B_{i^{'},j^{'}}^{(k)}\in {\mathcal B}^{(k)}$ such
that ${B_{i,j}^{(k)}}^{\,\phi}\in {\mathcal
B}^{(k^{'})},\,{B_{i^{'},j^{'}}^{(k)}}^{\phi}\in {\mathcal
B}^{(k^{''})}$ with $k^{'}\neq k^{''}.$

Since $B_{i,j}^{(k)}\neq B_{i^{'},j^{'}}^{(k)},$ we have
$|B_{i,j}^{(k)}\cap
B_{i^{'},j^{'}}^{(k)}|=0~or~\frac{(n-2)!n!^{q-1}}{2^q}$ by using
Lemma \ref{yl-4.5} and its proof. So
\begin{eqnarray*}
|{B_{i,j}^{(k)}}^{\,\phi}\cup
{B_{i^{'},j^{'}}^{(k)}}^{\phi}|&=&|(B_{i,j}^{(k)}\cup
B_{i^{'},j^{'}}^{(k)})^{\,\phi}|=|B_{i,j}^{(k)}\cup
B_{i^{'},j^{'}}^{(k)}|\\
&=&\frac{2(n-1)!n!^{q-1}}{2^q}~or~\frac{2(n-1)!n!^{q-1}-(n-2)!n!^{q-1}}{2^q}.
\end{eqnarray*}
On the other hand,
\begin{eqnarray*}
{B_{i,j}^{(k)}}^{\,\phi}\in
B^{(k^{'})},{B_{i^{'},j^{'}}^{(k)}}^{\phi}\in B^{(k^{''})}\,
(k^{'}\neq k^{''})&\Rightarrow& |{B_{i,j}^{(k)}}^{\,\phi}\cap
{B_{i^{'},j^{'}}^{(k)}}^{\phi}|=\frac{(n-1)!^2n!^{q-2}}{2^q}\\
&\Rightarrow& |{B_{i,j}^{(k)}}^{\,\phi}\cup
{B_{i^{'},j^{'}}^{(k)}}^{\phi}|=\frac{2(n-1)!n!^{q-1}-(n-1)!^2n!^{q-2}}{2^q},
\end{eqnarray*}
 which is a
contradiction. Thus the assertion holds.
\end{proof}

\begin{lemma}\label{yl-4.7}
Let ${\mathcal
R}_i^{(k)}=\{B_{i,1}^{(k)},B_{i,2}^{(k)},\cdots,B_{i,n}^{(k)}\}$ and
${\mathcal
C}_j^{(k)}=\{B_{1,j}^{(k)},B_{2,j}^{(k)},\cdots,B_{n,j}^{(k)}\},\,k=1,2,\cdots,q.$
Then for any $x_1,x_2,\cdots,x_n\in {\mathcal B},$ we have
$$x_1\cup x_2\cup\cdots\cup x_n=A_n^q$$ if and only if there exist some
$k\in\{1,2,\cdots,q\}$ and some $i$ or $j\in \{1,2,\cdots,n\}$ such
that $\{x_1,x_2,\cdots,x_n\}={\mathcal R}_i^{(k)}$ or ${\mathcal
C}_j^{(k)}.$
\end{lemma}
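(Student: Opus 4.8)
The plan is to convert the covering equation into a pairwise-disjointness statement by counting, then feed this into Lemma~\ref{yl-4.5} and finish with a short combinatorial argument on the index pairs.

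First I would record the relevant cardinalities. By Theorem~\ref{dl-1.2} each $|B_{i,j}^{(k)}|=\frac{(n-1)!n!^{q-1}}{2^q}$, while $|A_n^q|=\frac{n!^q}{2^q}$, so that $n|B_{i,j}^{(k)}|=|A_n^q|$ exactly. For any $x_1,\cdots,x_n\in{\mathcal B}$ we have $x_1\cup\cdots\cup x_n\subseteq A_n^q$ together with $|x_1\cup\cdots\cup x_n|\le\sum_{m}|x_m|=|A_n^q|$, and equality in this second inequality holds precisely when the $x_m$ are pairwise disjoint. Hence $x_1\cup\cdots\cup x_n=A_n^q$ if and only if the sets $x_1,\cdots,x_n$ are pairwise disjoint (in particular pairwise distinct, since each member of ${\mathcal B}$ is nonempty). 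This reduces the lemma to classifying the $n$-element pairwise-disjoint subfamilies of ${\mathcal B}$.

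Writing $x_m=B_{i_m,j_m}^{(k_m)}$, Lemma~\ref{yl-4.5} tells me that two distinct members are disjoint if and only if they share the coordinate index, $k_m=k_{m'}$, and agree in exactly one of the two remaining indices. Pairwise disjointness therefore forces a single common $k$, so all $x_m$ lie in one family ${\mathcal B}^{(k)}$, and for every pair exactly one of $i_m=i_{m'}$, $j_m=j_{m'}$ holds. I would then regard each $x_m$ as the lattice point $(i_m,j_m)$ in the $n\times n$ grid, where the condition reads: any two of the points agree in exactly one coordinate. A brief case check—fixing two points that share, say, a row, and testing a third point against both—eliminates every possibility except that all points share a common first coordinate $i$, or all share a common second coordinate $j$. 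Since the $n$ points are distinct, the free index then runs through all of $\{1,2,\cdots,n\}$, giving $\{x_1,\cdots,x_n\}={\mathcal R}_i^{(k)}$ or ${\mathcal C}_j^{(k)}$.

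The converse is immediate: every $(\sigma_1,\cdots,\sigma_q)\in A_n^q$ satisfies $i^{\sigma_k}=j$ for exactly one $j$, so $\bigcup_{j=1}^n B_{i,j}^{(k)}=A_n^q$, and symmetrically $\bigcup_{i=1}^n B_{i,j}^{(k)}=A_n^q$. I expect the only genuine content to be the grid argument of the previous paragraph; everything else is counting together with a direct appeal to Lemma~\ref{yl-4.5}.
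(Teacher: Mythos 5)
Your proposal is correct and follows essentially the same route as the paper's own proof: the converse is immediate, the counting identity $n|B_{i,j}^{(k)}|=|A_n^q|$ forces pairwise disjointness, and Lemma~\ref{yl-4.5} then pins down the family as a row ${\mathcal R}_i^{(k)}$ or column ${\mathcal C}_j^{(k)}$. The only difference is one of detail: your grid argument (any two points agreeing in exactly one coordinate, checked against a third point) makes explicit the combinatorial step that the paper compresses into the single phrase ``Applying Lemma~\ref{yl-4.5}, we obtain\dots''.
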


\begin{proof}
Clearly if $\{x_1,x_2,\cdots,x_n\}={\mathcal R}_i^{(k)}$ or
${\mathcal C}_j^{(k)}$ for some $k\in\{1,2,\cdots,q\}$ and some $i$
or $j\in \{1,2,\cdots,n\},$ then $x_1\cup x_2\cup\cdots\cup
x_n=A_n^q.$

Assume that $x_1\cup x_2\cup\cdots\cup x_n=A_n^q.$ Since
$\forall\,i,\,|x_i|=\frac{(n-1)!n!^{q-1}}{2^q}$ and
$|A_n^q|=\frac{n!^q}{2^q},$  we have $x_i\cap
x_j=\emptyset,\,\forall i,j,\,i\neq j.$ Applying Lemma \ref{yl-4.5},
we obtain $\{x_1,x_2,\cdots,x_n\}={\mathcal R}_i^{(k)}$ or
${\mathcal C}_j^{(k)}.$
\end{proof}

\begin{lemma}\label{yl-4.8}
Let $\Omega=\{{\mathcal C}_i^{(k)},{\mathcal
R}_j^{(k)},i,j=1,2,\cdots,n;\,k=1,2,\cdots,q\}.$ Then the action of
$\Aut(A\T_n^q)$ on $\Omega$ can be induced by the action of
$\Aut(A\T_n^q)$ on ${\mathcal B}$ in Lemma \ref{yl-4.4}, and it is
faithful. Furthermore, any $\phi\in \Aut(A\T_n^q)$ is a permutation
of $\Omega.$
\end{lemma}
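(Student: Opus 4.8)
The plan is to establish Lemma~\ref{yl-4.8} by showing that the set $\Omega$ of ``lines'' (the row-families $\mathcal{R}_j^{(k)}$ and column-families $\mathcal{C}_i^{(k)}$) is intrinsically characterized inside $\mathcal{B}$ by a property that every $\phi\in\Aut(A\T_n^q)$ must preserve. By Lemma~\ref{yl-4.4}, each $\phi$ already acts as a permutation of $\mathcal{B}$, so it suffices to show this permutation carries $\Omega$ to itself. The characterization I would use is exactly the content of Lemma~\ref{yl-4.7}: a set of $n$ elements of $\mathcal{B}$ covers $A_n^q$ if and only if it is one of the lines in $\Omega$. Since covering $A_n^q$ is a purely set-theoretic condition (a union equalling the whole vertex set), and $\phi$ acts bijectively on $A_n^q$, this condition is manifestly $\phi$-invariant.

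**First I would** argue that the action of $\Aut(A\T_n^q)$ on $\Omega$ is well-defined and induced by the action on $\mathcal{B}$. Fix $\phi\in\Aut(A\T_n^q)$ and let $L\in\Omega$, say $L=\{x_1,\dots,x_n\}$ with $x_1\cup\cdots\cup x_n=A_n^q$ by Lemma~\ref{yl-4.7}. Then $x_1^{\phi}\cup\cdots\cup x_n^{\phi}=(x_1\cup\cdots\cup x_n)^{\phi}=(A_n^q)^{\phi}=A_n^q$, where the last equality holds because $\phi$ is a permutation of the vertex set $A_n^q$. Each $x_i^{\phi}\in\mathcal{B}$ by Lemma~\ref{yl-4.4}, so $\{x_1^{\phi},\dots,x_n^{\phi}\}$ is a set of $n$ members of $\mathcal{B}$ whose union is all of $A_n^q$. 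Applying the converse direction of Lemma~\ref{yl-4.7}, this image set is again some $\mathcal{R}_j^{(k')}$ or $\mathcal{C}_i^{(k')}$, hence lies in $\Omega$. This defines the induced map $L\mapsto L^{\phi}=\{x_1^{\phi},\dots,x_n^{\phi}\}$ on $\Omega$, and it is a bijection of $\Omega$ since $\phi^{-1}$ induces its inverse by the same reasoning.

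**Next I would** establish faithfulness. Suppose $\phi\in\Aut(A\T_n^q)$ fixes every element of $\Omega$ setwise. I want to conclude that $\phi$ fixes every element of $\mathcal{B}$, whence $\phi=\mathrm{id}$ by the faithfulness already proved in Lemma~\ref{yl-4.4}. The key observation is that each individual $B_{i,j}^{(k)}\in\mathcal{B}$ is recoverable from $\Omega$ as an intersection of one row-family-member and one column-family-member: indeed $\{B_{i,j}^{(k)}\}={\mathcal R}_i^{(k)}\cap {\mathcal C}_j^{(k)}$ as subsets of $\mathcal{B}$, since $B_{i',j'}^{(k')}$ lies in both $\mathcal{R}_i^{(k)}$ and $\mathcal{C}_j^{(k)}$ exactly when $k'=k$, $i'=i$, and $j'=j$. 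If $\phi$ fixes $\mathcal{R}_i^{(k)}$ and $\mathcal{C}_j^{(k)}$ as sets, then $\phi$ (acting on $\mathcal{B}$) must send the unique common element $B_{i,j}^{(k)}$ to a common element of $\mathcal{R}_i^{(k)}$ and $\mathcal{C}_j^{(k)}$, which is $B_{i,j}^{(k)}$ itself. Hence $\phi$ fixes every member of $\mathcal{B}$, and so $\phi=\mathrm{id}$.

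**The main obstacle** I anticipate is bookkeeping rather than conceptual: one must verify carefully that the induced action is indeed a genuine group action on $\Omega$ (that is, $(L^{\phi})^{\psi}=L^{\phi\psi}$), but this is immediate from the fact that the $\Omega$-action is just the restriction of the $\mathcal{B}$-action of Lemma~\ref{yl-4.4}, which is itself a restriction of the natural action on $A_n^q$. A subtler point worth checking is that a line $L\in\Omega$ cannot be written ambiguously as both a row-family and a column-family for distinct parameters, so that the correspondence $L\leftrightarrow(k,i\text{ or }j)$ is unambiguous; this follows because a fixed line determines its coordinate $k$ and its ``type'' (row versus column) via Lemma~\ref{yl-4.5}, which records precisely when two members of $\mathcal{B}$ are disjoint. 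With these verifications the three assertions---induced action, faithfulness, and permutation of $\Omega$---all follow directly.
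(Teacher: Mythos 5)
Your proposal is correct and follows essentially the same route as the paper's proof: you use Lemma~\ref{yl-4.7} (covering $A_n^q$ by $n$ members of ${\mathcal B}$) to show $\phi$ carries lines to lines, and the intersection identity $\{B_{i,j}^{(k)}\}={\mathcal R}_i^{(k)}\cap {\mathcal C}_j^{(k)}$ together with the faithfulness from Lemma~\ref{yl-4.4} to get faithfulness on $\Omega$. The only cosmetic difference is in showing the induced map is a bijection of $\Omega$: you invoke the inverse automorphism $\phi^{-1}$, while the paper argues via the cardinality criterion $\omega_1\neq\omega_2\Leftrightarrow|\omega_1\cup\omega_2|>n$; both are sound.
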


\begin{proof}
First for any ${\mathcal R}_i^{(k)}\in \Omega$ and
$\phi\in\Aut(A\T_n^q),$ we have
$${B_{i,1}^{(k)}}^{\,\phi}\cup{B_{i,2}^{(k)}}^{\,\phi}\cup\cdots\cup{B_{i,n}^{(k)}}^{\,\phi}=
(B_{i,1}^{(k)}\cup B_{i,2}^{(k)}\cup\cdots\cup
B_{i,n}^{(k)})^{\,\phi}=(A_n^q)^{\,\phi}=A_n^q.$$ So by Lemma
\ref{yl-4.7}, we have ${{\mathcal
R}_i^{(k)}}^{\,\phi}=\{{B_{i,1}^{(k)}}^{\,\phi},{B_{i,2}^{(k)}}^{\,\phi},\cdots,{B_{i,n}^{(k)}}^{\,\phi}\}\in\Omega.$

Similarly, for any ${\mathcal C}_j^{(k)}\in \Omega$ and
$\phi\in\Aut(A\T_n^q),$ we have ${{\mathcal C}_j^{(k)}}^{\,\phi}\in
\Omega.$

Assume that $\phi\in \Aut(A\T_n^q)$ satisfies ${{\mathcal
R}_i^{(k)}}^{\,\phi}={\mathcal R}_i^{(k)}$ and ${{\mathcal
C}_j^{(k)}}^{\,\phi}={\mathcal C}_j^{(k)}$ for any $i,j\in
\{1,2,\cdots,n\}$ and $k\in \{1,2,\cdots,q\}.$  Then it suffices to
show that $\phi$ is the identity map.

Since for any $B_{i,j}^{(k)}\in {\mathcal B},$ we have
$\{{B_{i,j}^{(k)}}^{\,\phi}\}=({\mathcal R}_i^{(k)}\cap {\mathcal
C}_j^{(k)})^{\,\phi} \subseteq {{\mathcal R}_i^{(k)}}^{\,\phi}\cap
{{\mathcal C}_j^{(k)}}^{\,\phi} ={\mathcal R}_i^{(k)}\cap {\mathcal
C}_j^{(k)}=\{B_{i,j}^{(k)}\}.$ By Lemma \ref{yl-4.4}, the action of
$\Aut(A\T_n^q)$ on ${\mathcal B}$ is faithful. Thus $\phi$ is the
identity map.

For any $\omega_1,\omega_2\in \Omega$ and $\phi\in \Aut(A\T_n^q),$
\begin{eqnarray*}
\omega_1\neq\omega_2&\Leftrightarrow&|\omega_1\cup\omega_2|>n\\
&\Leftrightarrow&|(\omega_1\cup\omega_2)^{\phi}|>n\\
&\Leftrightarrow&|\omega_1^{\phi}\cup\omega_2^{\phi}|>n\\
&\Leftrightarrow&\omega_1^{\phi}\neq\omega_2^{\phi}.
\end{eqnarray*}
Thus $\phi$ is a permutation of $\Omega.$
\end{proof}

\begin{lemma}\label{yl-4.9}
Let ${\mathcal R}^{(k)}=\{{\mathcal R}_1^{(k)},{\mathcal
R}_2^{(k)},\cdots,{\mathcal R}_n^{(k)}\},\,{\mathcal
C}^{(k)}=\{{\mathcal C}_1^{(k)},{\mathcal
C}_2^{(k)},\cdots,{\mathcal C}_n^{(k)}\}$ and
$\Omega^{(k)}={\mathcal R}^{(k)}\cup {\mathcal
C}^{(k)},\,k=1,2,\cdots,q.$ For any $\phi\in \Aut(A\T_n^q),$ the
following (i)-(iii) hold:

(i) There exists a $\sigma\in S_q$ such that
${\Omega^{(k)}}^{\,\phi}=\Omega^{(k^{\sigma})},\,k=1,2,\cdots,q.$

(ii) There exists some ${\mathcal R}_i^{(k)}\in {\mathcal R}^{(k)}$
such that ${{\mathcal R}_i^{(k)}}^{\,\phi}\in {\mathcal
R}^{(k^{'})}$ if and only if ${{\mathcal R}_i^{(k)}}^{\,\phi}\in
{\mathcal R}^{(k^{'})}$ for any ${\mathcal R}_i^{(k)}\in {\mathcal
R}^{(k)};$

(iii) There exists some ${\mathcal R}_j^{(k)}\in {\mathcal R}^{(k)}$
such that ${{\mathcal R}_j^{(k)}}^{\,\phi}\in {\mathcal
C}^{(k^{'})}$ if and only if ${{\mathcal R}_j^{(k)}}^{\,\phi}\in
{\mathcal C}^{(k^{'})}$ for any ${\mathcal R}_j^{(k)}\in {\mathcal
R}^{(k)}.$
\end{lemma}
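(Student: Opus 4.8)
The plan is to lean on three facts already in hand: that $\phi$ permutes the blocks in ${\mathcal B}$ (Lemma \ref{yl-4.4}), that it permutes the lines in $\Omega$ with the induced action (Lemma \ref{yl-4.8}), and that on ${\mathcal B}$ it respects the coordinate partition (Lemma \ref{yl-4.6}). For part (i), I would first use Lemma \ref{yl-4.6} to define a self-map $\sigma$ of $\{1,\dots,q\}$: for each $k$ there is a unique $k'$ with $(B_{i,j}^{(k)})^{\phi}\in{\mathcal B}^{(k')}$ for all $i,j$, and I set $k^{\sigma}:=k'$. Since $\phi$ is injective on ${\mathcal B}$ and each ${\mathcal B}^{(k)}$ has the same size $n^2$, the map $\sigma$ cannot collapse two coordinates into one, so $\sigma\in S_q$. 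Now every line ${\mathcal R}_i^{(k)}$ or ${\mathcal C}_j^{(k)}$ consists entirely of blocks of coordinate $k$; by Lemma \ref{yl-4.8} its image is again a line, and that image consists entirely of blocks of coordinate $k^{\sigma}$, hence is a line of $\Omega^{(k^{\sigma})}$. Thus $(\Omega^{(k)})^{\phi}\subseteq\Omega^{(k^{\sigma})}$, and comparing cardinalities (both families contain $2n$ lines, and $\phi$ is injective on $\Omega$) gives $(\Omega^{(k)})^{\phi}=\Omega^{(k^{\sigma})}$, which is (i).

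For parts (ii) and (iii) the key observation is purely combinatorial: regarded as subsets of ${\mathcal B}$, two distinct rows ${\mathcal R}_i^{(k)},{\mathcal R}_{i'}^{(k)}$ of the same coordinate share no block, whereas a row ${\mathcal R}_i^{(k)}$ and a column ${\mathcal C}_j^{(k)}$ of the same coordinate always meet, in the single block $B_{i,j}^{(k)}$. Because $\phi$ acts as a bijection on ${\mathcal B}$, it preserves both disjointness and non-disjointness of lines. Hence the $n$ images $({\mathcal R}_1^{(k)})^{\phi},\dots,({\mathcal R}_n^{(k)})^{\phi}$ form a pairwise-disjoint family of lines, all lying in $\Omega^{(k^{\sigma})}$ by (i). Inside a single coordinate a pairwise-disjoint family of lines can contain only rows or only columns (any row and any column of that coordinate intersect); since there are exactly $n$ of each, the image family must be precisely ${\mathcal R}^{(k^{\sigma})}$ or precisely ${\mathcal C}^{(k^{\sigma})}$. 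In the first case every row of coordinate $k$ is sent into ${\mathcal R}^{(k^{\sigma})}$, which is statement (ii) with $k'=k^{\sigma}$ (and $k'=k^{\sigma}$ is forced, since ${\mathcal R}^{(k')}\subseteq\Omega^{(k')}$ must meet $\Omega^{(k^{\sigma})}$); in the second case every row is sent into ${\mathcal C}^{(k^{\sigma})}$, which is (iii). The converse directions in (ii) and (iii) are trivial.

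The only genuine content is the dichotomy \emph{all rows go to rows, or all rows go to columns}, and the main point to get right is that this is governed entirely by the meeting pattern of the lines viewed as subsets of ${\mathcal B}$ (row/row and column/column pairs are disjoint, row/column pairs meet) rather than by the intersection sizes of the blocks themselves in Lemma \ref{yl-4.5}. Equivalently, one can package the whole argument by putting a graph $L$ on $\Omega$ with an edge between two lines whenever they share a block: then $L$ is a disjoint union of $q$ complete bipartite graphs $K_{n,n}$ with colour classes ${\mathcal R}^{(k)}$ and ${\mathcal C}^{(k)}$, its connected components are exactly the $\Omega^{(k)}$, and $\phi$ is an automorphism of $L$ that permutes components (yielding (i)) and respects the unique bipartition of each $K_{n,n}$ (yielding (ii) and (iii)). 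I expect no real obstacle beyond keeping this incidence bookkeeping straight, so I would write the two-line disjointness computation out explicitly and let the rest follow.
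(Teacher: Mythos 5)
Your proposal is correct and follows essentially the same route as the paper: part (i) via Lemma \ref{yl-4.6} (the coordinate map $\sigma$), injectivity on ${\mathcal B}$, and Lemma \ref{yl-4.8}, and parts (ii)--(iii) via the fact that the induced action on lines preserves the intersection pattern (rows of one coordinate pairwise disjoint, row and column of the same coordinate meeting in exactly one block). The paper phrases (ii)--(iii) as a contradiction with explicit union cardinalities ($2n$ versus $2n-1$) where you argue directly with the pairwise-disjoint image family, but this is only a cosmetic difference.
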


\begin{proof} (i) By Lemma \ref{yl-4.6}, for any $k\in \{1,2,\cdots,q\}$ there
exists a $l\in \{1,2,\cdots,q\}$ such that ${{\mathcal
B}^{(k)}}^{\,\phi}={\mathcal B}^{(l)}.$ Moreover, if $k\neq k^{'},$
then by Lemma \ref{yl-4.4}, we have ${{\mathcal
B}^{(k)}}^{\,\phi}\neq{{\mathcal B}^{(k^{'})}}^{\,\phi}.$ Thus there
exists a $\sigma\in S_q$ such that ${{\mathcal
B}^{(k)}}^{\,\phi}={\mathcal B}^{(k^{\sigma})},\,k=1,2,\cdots,q.$ By
Lemma \ref{yl-4.8}, the assertion holds.

(ii) First by (i), there exists some ${\mathcal R}_i^{(k)}\in
{\mathcal R}^{(k)}$ such that ${{\mathcal R}_i^{(k)}}^{\,\phi}\in
\Omega^{(k^{'})}$ if and only if ${{\mathcal R}_i^{(k)}}^{\,\phi}\in
\Omega^{(k^{'})}$ for any ${\mathcal R}_i^{(k)}\in {\mathcal
R}^{(k)}.$

Suppose on the contrary that there exist $i,j\,(\neq i)\in
\{1,2,\cdots,n\}$ such that ${{\mathcal R}_i^{(k)}}^{\,\phi}\in
{\mathcal R}^{(k^{'})}$ and ${{\mathcal R}_j^{(k)}}^{\,\phi}\in
{\mathcal C}^{(k^{'})}.$

Note that
$${\mathcal
R}_i^{(k)}\cap {\mathcal R}_j^{(k)}=\emptyset~for~i\neq j,$$
$${\mathcal
R}_i^{(k)}\cap {\mathcal C}_j^{(k)}=\{B_{i,j}^{(k)}\}~for~any~i,j.$$
Then
$$i\neq j\Rightarrow {\mathcal
R}_i^{(k)}\cap {\mathcal R}_j^{(k)}=\emptyset \Rightarrow |{\mathcal
R}_i^{(k)}\cup {\mathcal R}_j^{(k)}|=2n\Rightarrow |{{\mathcal
R}_i^{(k)}}^{\,\phi}\cup {{\mathcal
R}_j^{(k)}}^{\,\phi}|=|({\mathcal R}_i^{(k)}\cup {\mathcal
R}_j^{(k)})^{\,\phi}|=2n.$$ On the other hand,
$${{\mathcal
R}_i^{(k)}}^{\,\phi}\in {\mathcal R}^{(k^{'})},\,{{\mathcal
R}_j^{(k)}}^{\,\phi}\in {\mathcal C}^{(k^{'})}\Rightarrow
|{{\mathcal R}_i^{(k)}}^{\,\phi}\cap {{\mathcal
R}_j^{(k)}}^{\,\phi}|=1\Rightarrow |{{\mathcal
R}_i^{(k)}}^{\,\phi}\cup {{\mathcal R}_j^{(k)}}^{\,\phi}|=2n-1,$$
which is a contradiction. Thus the assertion holds.

(iii) The proof of (iii) is similar to that of (ii).
\end{proof}

\begin{lemma} \label{yl-4.10}
For $n\geq 5,$ we have
    $$|\Aut(A\T_n^q)|\leq q!n!^{2q}.$$
\end{lemma}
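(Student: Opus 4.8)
The plan is to bound $|\Aut(A\T_n^q)|$ by counting the distinct permutations of $\Omega$ that can be induced by automorphisms. By Lemma~\ref{yl-4.8} the action of $\Aut(A\T_n^q)$ on $\Omega$ is faithful, so each $\phi$ is completely determined by how it permutes the rows ${\mathcal R}_i^{(k)}$ and columns ${\mathcal C}_j^{(k)}$; moreover its action on ${\mathcal B}$, and hence on the vertices via $\{(\sigma_1,\dots,\sigma_q)\}=\bigcap_{k,i}B_{i,i^{\sigma_k}}^{(k)}$ as in Lemma~\ref{yl-4.4}, is recovered from it. By Lemma~\ref{yl-4.9}(i) each $\phi$ determines a $\sigma\in S_q$ with ${\Omega^{(k)}}^{\phi}=\Omega^{(k^{\sigma})}$, giving at most $q!$ choices; by Lemma~\ref{yl-4.9}(ii)--(iii) each block $k$ carries a transpose bit $\epsilon_k\in\{0,1\}$ recording whether $\phi$ sends ${\mathcal R}^{(k)}$ into ${\mathcal R}^{(k^{\sigma})}$ or into ${\mathcal C}^{(k^{\sigma})}$; and finally $\phi$ induces a permutation $\rho_k\in S_n$ of the $n$ rows and $\gamma_k\in S_n$ of the $n$ columns of each block. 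Thus every $\phi$ is encoded injectively by the data $(\sigma,\{\epsilon_k,\rho_k,\gamma_k\}_{k=1}^q)$, and $|\Aut(A\T_n^q)|$ is at most the number of admissible such tuples.

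A naive count of this data gives $q!\,(2(n!)^2)^q$, which is $2^q$ times too large; the crux of the proof is to remove this factor by exploiting that the vertices are \emph{even} permutations. Consider first a $\phi$ fixing block $k$ with ${B_{i,j}^{(k)}}^{\phi}=B_{i^{\rho_k},j^{\gamma_k}}^{(k)}$. For $g=(g_1,\dots,g_q)\in A_n^q$ the identity $\{g^{\phi}\}=\bigcap_{k,i}(B_{i,i^{g_k}}^{(k)})^{\phi}=\bigcap_{k,i}B_{i^{\rho_k},(i^{g_k})^{\gamma_k}}^{(k)}$ forces $(i^{\rho_k})^{(g^{\phi})_k}=(i^{g_k})^{\gamma_k}$ for every $i$, that is $(g^{\phi})_k=\rho_k^{-1}g_k\gamma_k$. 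Since $\phi$ maps $A_n^q$ into $A_n^q$, both $g_k$ and $\rho_k^{-1}g_k\gamma_k$ lie in $A_n$, whence $\rho_k^{-1}\gamma_k\in A_n$ and $\rho_k,\gamma_k$ must have the \emph{same parity}. In the transpose case ${B_{i,j}^{(k)}}^{\phi}=B_{j^{\gamma_k},i^{\rho_k}}^{(k^{\sigma})}$ the same computation gives $(g^{\phi})_{k^{\sigma}}=\gamma_k^{-1}g_k^{-1}\rho_k$, and the evenness of the image again forces $\rho_k$ and $\gamma_k$ to share the same parity.

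With the parity constraint in hand, the number of admissible pairs $(\rho_k,\gamma_k)$ in each block drops from $(n!)^2$ to $2\left(\frac{n!}{2}\right)^2=\frac{(n!)^2}{2}$, so each block contributes at most $2\cdot\frac{(n!)^2}{2}=(n!)^2$ choices once the transpose bit is included. Multiplying over the $q$ blocks and the $q!$ choices of $\sigma$ yields
$$|\Aut(A\T_n^q)|\leq q!\prod_{k=1}^q\left(2\cdot\frac{(n!)^2}{2}\right)=q!\,(n!)^{2q},$$
as required. The main obstacle is the middle step: translating the purely combinatorial description of $\phi$ on ${\mathcal B}$ into the arithmetic identity $(g^{\phi})_k=\rho_k^{-1}g_k\gamma_k$ and reading off the parity restriction. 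This is exactly where the alternating group, rather than the full symmetric group, enters and couples the row and column permutations; everything else is bookkeeping resting on Lemmas~\ref{yl-4.4}, \ref{yl-4.8} and~\ref{yl-4.9}.
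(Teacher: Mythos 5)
Your proof is correct and follows essentially the same route as the paper: decompose $\Aut(A\T_n^q)$ by the block permutation $\sigma\in S_q$ (Lemma~\ref{yl-4.9}(i)) and a per-block transpose bit (Lemma~\ref{yl-4.9}(ii)--(iii)), encode each $\phi$ injectively by its row/column permutations $(\rho_k,\gamma_k)$ using faithfulness of the action on $\Omega$ (Lemma~\ref{yl-4.8}), and cut each block's count from $2(n!)^2$ to $(n!)^2$ by the parity constraint coming from evenness of the vertices. The only cosmetic difference is that the paper extracts this parity constraint by evaluating $\phi$ at the single vertex $(1,1,\cdots,1)$ and concluding $\phi_1^{-1}\phi_2\in A_n$, whereas you derive the formula $(g^{\phi})_{k^{\sigma}}=\rho_k^{-1}g_k\gamma_k$ at every vertex; both give the same bound of $\frac{(n!)^2}{2}$ same-parity pairs per block.
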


\begin{proof} By
(i) of Lemma \ref{yl-4.9}, for any $\phi\in \Aut(A\T_n^q),$ there
exists a $\sigma\in S_q$ such that
${\Omega^{(k)}}^{\,\phi}=\Omega^{(k^{\sigma})}~(k=1,2,\cdots,q).$
Using (ii) and (iii) of Lemma \ref{yl-4.9} we obtain the following
disjoint alternatives:

(i) ${{\mathcal R}^{(k)}}^{\,\phi}={\mathcal R}^{(k^{\sigma})}$ and
${{\mathcal C}^{(k)}}^{\,\phi}={\mathcal C}^{(k^{\sigma})};$

(ii) ${{\mathcal R}^{(k)}}^{\,\phi}={\mathcal C}^{(k^{\sigma})}$ and
${{\mathcal C}^{(k)}}^{\,\phi}={\mathcal R}^{(k^{\sigma})}.$

So $\Aut(A\T_n^q)=\bigcup_{\sigma\in S_q}\{\phi\in
\Aut(A\T_n^q):{\Omega^{(k)}}^{\,\phi}=\Omega^{(k^{\sigma})},k=1,2,\cdots,q\}.$
Hence, if we can prove the last two inequalities, then we have
\begin{eqnarray*}
|\Aut(A\T_n^q)|&\leq&\Sigma_{\sigma\in S_q}|\{\phi\in
\Aut(A\T_n^q):{\Omega^{(k)}}^{\,\phi}=\Omega^{(k^{\sigma})},k=1,2,\cdots,q\}|\\
&\leq&\Sigma_{\sigma\in S_q}\Pi_{k=1}^q(|\{\phi\in
\Aut(A\T_n^q):{{\mathcal R}^{(k)}}^{\,\phi}={\mathcal
R}^{(k^{\sigma})},\,{{\mathcal C}^{(k)}}^{\,\phi}={\mathcal
C}^{(k^{\sigma})}\}|+\\
&&~~~~~~~~~~~~~~~~~|\{\phi\in \Aut(A\T_n^q):{{\mathcal
R}^{(k)}}^{\,\phi}={\mathcal C}^{(k^{\sigma})},\,{{\mathcal
C}^{(k)}}^{\,\phi}={\mathcal
R}^{(k^{\sigma})}\}|)\\
&\leq&\Sigma_{\sigma\in S_q}\Pi_{k=1}^q(\frac{n!^2}{2}+\frac{n!^2}{2})\\
&=&\Sigma_{\sigma\in S_q}n!^{2q}\\
&=&q!n!^{2q}.
\end{eqnarray*}

Now we show that
$$|\{\phi\in \Aut(A\T_n^q):{{\mathcal
R}^{(k)}}^{\,\phi}={\mathcal R}^{(k^{\sigma})},\,{{\mathcal
C}^{(k)}}^{\,\phi}={\mathcal C}^{(k^{\sigma})}\}|\leq
\frac{n!^2}{2},$$
$$|\{\phi\in \Aut(A\T_n^q):{{\mathcal
R}^{(k)}}^{\,\phi}=C^{(k^{\sigma})},\,{C^{(k)}}^{\,\phi}={\mathcal
R}^{(k^{\sigma})}\}|\leq \frac{n!^2}{2}.$$

Indeed, for any $\phi\in \Aut(A\T_n^q)$ such that ${{\mathcal
R}^{(k)}}^{\,\phi}={\mathcal R}^{(k^{\sigma})},\,{{\mathcal
C}^{(k)}}^{\,\phi}={\mathcal C}^{(k^{\sigma})},$ define
$\phi_1,\phi_2\in S_n$ as ${{\mathcal R}_i^{(k)}}^{\phi}={\mathcal
R}_{i^{\,\phi_1}}^{(k^{\sigma})},\,{{\mathcal
C}_j^{(k)}}^{\phi}={\mathcal C}_{j^{\,\phi_2}}^{(k^{\sigma})}.$

Since $\{{B_{ij}^{(k)}}^{\phi}\}=({\mathcal R}_i^{(k)}\cap {\mathcal
C}_j^{(k)})^{\phi}\subseteq {{\mathcal
R}_i^{(k)}}^{\phi}\cap{{\mathcal C}_j^{(k)}}^{\phi}={\mathcal
R}_{i^{\,\phi_1}}^{(k^{\sigma})}\cap {\mathcal
C}_{j^{\,\phi_2}}^{(k^{\sigma})}=\{B_{i^{\,\phi_1}j^{\,\phi_2}}^{(k^{\sigma})}\},$
we have
$$\{(1,1,\cdots,1)^{\phi}\}\in (\bigcap_{i=1}^n B_{ii}^{(k)})^{\phi}
\subseteq\bigcap_{i=1}^n {B_{ii}^{(k)}}^{\phi} =\bigcap_{i=1}^n
B_{i^{\,\phi_1}i^{\,\phi_2}}^{(k^{\sigma})}
=\{(\tau_1,\tau_2,\cdots,\tau_q)\in
A_n^q:\tau_{k^{\sigma}}=\phi_1^{-1}\phi_2\}.$$ So
$(1,1,\cdots,1)^{\phi}=(\tau_1,\cdots,\tau_{k^{\sigma}-1},\phi_1^{-1}\phi_2,\tau_{k^{\sigma}+1}\cdots,\tau_q)\in
A_n^q,$ which implies that $\phi_1^{-1}\phi_2\in A_n.$

Thus \begin{eqnarray*} &&|\{\phi\in \Aut(A\T_n^q):{{\mathcal
R}^{(k)}}^{\,\phi}={\mathcal R}^{(k^{\sigma})},\,{{\mathcal
C}^{(k)}}^{\,\phi}={\mathcal
C}^{(k^{\sigma})}\}|\\
&=&|\{\phi\in \Aut(A\T_n^q):{{\mathcal R}^{(k)}}^{\,\phi}={\mathcal
R}^{(k^{\sigma})},\,{{\mathcal C}^{(k)}}^{\,\phi}={\mathcal
C}^{(k^{\sigma})},\,\phi_1^{-1}\phi_2\in
A_n\}|\\
&\leq& \frac{n!^2}{2}.
\end{eqnarray*}

Similarly,
$$|\{\phi\in \Aut(A\T_n^q):{{\mathcal
C}^{(k)}}^{\,\phi}={\mathcal R}^{(k^{\sigma})},\,{{\mathcal
C}^{(k)}}^{\,\phi}={\mathcal R}^{(k^{\sigma})}\}|\leq
\frac{n!^2}{2}.$$

Thus the assertion holds.
\end{proof}

\begin{proof}  (of Theorem~\ref{dl-1.3}).  By Lemma~\ref{yl-4.3},  we have
 $|\Aut(A\T_n^q)|\geq q!n!^{2q}.$  On the other hand,
by Lemma~\ref{yl-4.10}, we obtain $|\Aut(A\T_n^q|\leq q!n!^{2q}.$
Hence $|\Aut(A\T_n^q)|=q!n!^{2q},$ and by Lemma~\ref{yl-4.3} again,
the assertion holds.
\end{proof}

 \frenchspacing


\begin{thebibliography}{}

\bibitem{Albertson} Albertson M O, Collins K L. Homomorphisms of $3$-chromatic
graphs. \emph{Discrete Math}, {\bf 54}: 127--132 (1985)

\bibitem{Alon} Alon N, Dinur I, Friedgut E, et al. Graph products,
fourier analysis and spectral techniques. \emph{Geomatric And
Functional Analysis}, {\bf 14}: 913--940 (2004)

\bibitem{Cameron} Cameron P J, Ku C Y. Intersecting families of
permutations. \emph{European J Combin}, {\bf 24}: 881--890 (2003)

\bibitem{deza1977} Deza M, Frank P. On the maximum number of
permutations given maximal or minimal distance. \emph{J Combin
Theory Ser A}, {\bf 22}: 352--360 (1977)

\bibitem{Eggleton1985} Eggleton R B, Wallis W D. Problem 86:
Solution I. \emph{Math Mag}, {\bf 58}: 112--113 (1985)

\bibitem{Fang} Fang X G, Praeger C E, Wang J. On the automorphism groups of Cayley graphs of finite simple
groups. \emph{J London Math Soc}, {\bf (2)66}: 563--578 (2002)

\bibitem{Feng2} Feng Y Q. Automorphism groups of Cayley graphs on symmetric groups
with generating transposition sets. \emph{J Combin Theory Ser B},
{\bf 96}: 67--72 (2006)

\bibitem{Feng1} Feng Y Q, Xu M Y. Automorphism groups of tetravalent Cayley graphs on regular
p-groups. \emph{Discrete Math}, {\bf 305}: 354--360 (2005)

\bibitem{Godsil} Godsil C. Interesting Graphs and Their Colourings.
http://www.math.ntu.edu.tw/$\sim$
gjchang/courses/2008-09-algebraic-graph-theory/Interesting\%20Graphs\%20and\%20
their\%20Colourings\%20(Godsil).pdf

\bibitem{Godsil2} Godsil C D. The automorphism groups of some cubic Cayley
graphs. \emph{European J Combin}, {\bf 4}: 25--32 (1983)

\bibitem{Godsil3} Godsil C D. On the full automorphism group of a
graph. \emph{Combinatorica}, {\bf 1}: 243--256 (1981)

\bibitem{Graham} Graham R L, Gr\"{o}tschel M, Lov\'{a}sz L. Handbook of
Combinatorics v II, Cambridge: The MIT Press (1995)

\bibitem{Huan} Huan H L, Liu H M, Xie W. Automorphism groups of a family of Cayley graphs on alternating
groups. \emph{J Syst Sci Inform}, {\bf 5}: 37--42 (2007)

\bibitem{Johnson} Johnson E A. Automorphism groups of direct products of groups and their geometric realisations.
\emph{Math Ann}, {\bf 263}: 343--364 (1983)

\bibitem{Ku2} Ku C Y, McMillan B. Independent sets of maximal size in tensor powers of vertex-transitive
graphs. \emph{J Graph Theory}, {\bf (4)60}: 295--301 (2009)

\bibitem{Ku}  Ku C Y, Wong T W H. Intersecting families in the alternating
group and direct product of symmetric groups. \emph{Electron J
Combin}, {\bf 14}:\# R25 (2007)

\bibitem{Larose} Larose B, Malvenuto C. Stable sets of maximal size in
Kneser-type graphs. \emph{European J Combin}, {\bf 14}: 657--673
(2004)

\bibitem{Larose2} Larose B, Tardif C. Projectivity and independent sets in powers of
graphs. \emph{J Graph Theory}, {\bf (3)40}: 162--171 (2002)

\bibitem{Paul} Renteln P. On the spectrum of the derangement Graph. \emph{Electron J
Combin}, {\bf 14}: \# R82 (2007)

\bibitem{Rasmussen} Rasmussen D J, Savage C D. Hamilton-connected derangement
graphs on $S_n$. \emph{Discrete Math}, {\bf 133}: 217--223 (1994)

\bibitem{Sanders} Sanders R S, George J C. Basic results concerning the automorphism
group of the tensor product of two graphs. \emph{Utilitas Math},
51--63 (June 1997)

\bibitem{Suzuki} Suzuki M. Group theory I, New York: Springer (1982)

\bibitem{Xu} Xu M Y. Automorphism groups and isomorphisms of Cayley
digraphs.  \emph{Discrete Math}, {\bf 182}: 309--319 (1998)

\bibitem{Z} Zhang Z, Huang Q X. Automorphism group of bubble-sort graphs and
modified bubble-sort graphs. \emph{Adv Math}, {\bf 34}: 441--447
(2005)

\bibitem{Zhang} Zhang C, Zhou J X, Feng Y Q. Automorphisms of cubic Cayley graphs of order
2pq. \emph{Discrete Math}, {\bf 309}: 2687--2695 (2009)


\end{thebibliography}
\end{document}